\documentclass[12pt,a4paper]{article}
\usepackage[latin1]{inputenc}
\usepackage{amsmath, amsthm, amssymb, amsfonts, amscd}
\usepackage[english]{babel}
\usepackage[all]{xy}
\usepackage[linktocpage=true]{hyperref}
\usepackage{color}
\usepackage[table]{xcolor}
\usepackage{pinlabel}
\usepackage{tikz-cd}
 \usepackage{booktabs}
\usepackage{enumitem}
\usepackage{geometry}\geometry{margin=1in}
\usepackage{mathtools}
\usepackage{authblk}
\usepackage{thmtools}
\usepackage{stmaryrd}
\usepackage{comment}

\usepackage{mmacells} 
\newlength\dunder
\settowidth{\dunder}{\_}

\declaretheoremstyle[bodyfont=\normalfont,spaceabove=\medskipamount,
    spacebelow=\medskipamount]{definition}

\theoremstyle{definition}

\newtheorem{theorem}{Theorem}[section]
\newtheorem{lemma}[theorem]{Lemma}
\newtheorem{corollary}[theorem]{Corollary}
\newtheorem{proposition}[theorem]{Proposition}
\newtheorem{definition}[theorem]{Definition}

\newtheorem{remark}[theorem]{Remark}

\newtheorem{example}[theorem]{Example}


\newcommand{\defeq}{\vcentcolon=}

\bibliographystyle{plain}

\title{\large \textbf{NEW QUANTUM INVARIANTS OF PLANAR KNOTOIDS}}

\author{\normalsize WOUT MOLTMAKER \& ROLAND VAN DER VEEN}

\date{}

\begin{document}

\maketitle

\begin{abstract}
In this paper we discuss the applications of knotoids to modelling knots in open curves and produce new knotoid invariants. We show how invariants of knotoids generally give rise to well-behaved measures of how much an open curve is knotted. We define biframed planar knotoids, and construct new invariants of these objects that can be computed in polynomial time. As an application of these invariants we improve the classification of planar knotoids with up to five crossings by distinguishing several pairs of prime knotoids that were conjectured to be distinct by Goundaroulis et al.
\end{abstract}


\section{Introduction}

Knotoids were introduced by V.~Turaev in \cite{turaev2012knotoids}. Intuitively, a knotoid on a surface $\Sigma$ is a knot diagram on $\Sigma$ with two open ends that are allowed to lie in the interior of the diagram. This last property is in contrast with long knots or tangles. For applications one is usually interested in the cases $\Sigma=S^2$ or $\Sigma=\mathbb{R}^2$, and in this paper we shall focus on these cases. Knotoids on $S^2$ and $\mathbb{R}^2$ are referred to as \textbf{spherical} and \textbf{planar}, respectively.

The most prevalent application of knotoids is to the topology of knotted open curves \cite{goundaroulis2017topological,dorier2018knoto,gugumcu2017new}. Examples of such curves are plentiful in chemistry and molecular biology, see for example \cite{mansfield1994there,everaers1996topological,cox2008hydrocarbon,morse2020dna}, but quantifying the knottedness of such a curve remains difficult. The problem is that finding the knottedness of an open curve is not a well-posed problem: while closed curves (knots) fall into different equivalence classes under ambient isotopy (their knot type), all open curves are clearly equivalent under ambient isotopy. In this paper we discuss how the knottedness of open curves can be rigorously quantified, and show that knotoids are a useful tool for doing so. In particular we formalize what it means to quantify the knottedness of an open curve by defining \textbf{knot measures}. We then give a large class of examples of such knot measures by showing that every invariant of spherical knotoids produces a knot measure, generalizing the results of \cite{panagiotou2020}.

After discussing how invariants of knotoids can be used to extract topological information of open curves, we go on to construct several new invariants of planar knotoids. In particular we define the universal quantum invariant of planar knotoids associated to a given ribbon Hopf algebra. In fact this is an invariant of `biframed' knotoids, which we introduce and discuss at some length in a preliminary section. We show that all quantum invariants defined in \cite{moltmaker} can be recovered from these universal quantum invariants, thereby demonstrating their `universality'. We then consider the specific example of the Hopf algebra $\mathbb{D}$ introduced in \cite{bar2021}, and discuss a computer implementation to compute portions of this invariant in polynomial time.

While spherical knotoids are relatively well-understood, being tabulated up to 6 crossings \cite{goundaroulis2019systematic}, planar knotoids have proven to be more difficult to classify. In \cite{goundaroulis2019systematic} a table of prime planar knotoids with up to 5 crossings is produced and it is found that there are between 944 and 950 prime planar knotoids with 5 crossings. (Compare this with the spherical case, where there are only 24 prime knotoids with 5 crossings.) We see that the classification of planar knotoids with 5 crossings is almost complete, with the exception of 6 unresolved pairs of knotoid diagrams that no known invariant of planar knotoids can distinguish but that are nevertheless thought to be in-equivalent. Having defined universal quantum invariants of planar knotoids we consider one such invariant, and show that it can be used to resolve 2 of the 6 unresolved pairs of planar knotoids. We thereby improve on the classification of planar knotoids, and hence on known techniques of distinguishing knotoids.\\

The outline of this paper is as follows: In section \ref{sec:definitions} we define the objects of interest to us, namely knotoids, simple theta-curves, $H$-curves, and biframed planar knotoids. Next, in section \ref{sec:measures} we discuss a general framework for how knotoids and knotoid invariants can be applied to quantify the knottedness of open curves. In section \ref{sec:invariants} we define universal quantum invariants of biframed planar knotoids, relate them to the invariants discussed in \cite{moltmaker}, and define a new quantum invariant of planar knotoids which can be computed in polynomial time based on \cite{bar2021}. In section \ref{sec:examples} we carry out example computations of the new invariant considered in section \ref{sec:invariants}, distinguishing some of the unresolved pairs of planar knotoids from \cite{goundaroulis2019systematic}.

\section{Definitions}\label{sec:definitions}

In this section we will review the basic objects of study in this paper, namely planar and spherical knotoids, biframed knotoids, and the objects in $S^3$ whose ambient isotopy classes they are in bijection with.

\subsection{Knotoids and Geometric Realizations}

\begin{definition}
\cite{turaev2012knotoids} Let $\Sigma$ be a surface. A \textbf{knotoid diagram} on $\Sigma$ is a smooth immersion $\phi:[0,1]\to \Sigma$ whose only singularities are transversal double points with over/undercrossing data. For a knotoid diagram $\phi$ we refer to $\phi(0)$ and $\phi(1)$ as the \textbf{leg} and \textbf{head} of $\phi$, respectively. We say that two knotoid diagrams are \textbf{equivalent} if they can be related by a sequence of ambient isotopies and applications of the Reidemeister moves $R1,R2,R3$ familiar for knot diagrams. A \textbf{knotoid} on $\Sigma$ is an equivalence class of knotoid diagrams on $\Sigma$.
\end{definition}

\begin{remark}
We explicitly note that the definition of equivalence of knotoid diagrams does not allow for either of the \textbf{forbidden moves} depicted in Figure \ref{fig:forbiddenmoves} that involve a crossing and an end-point. Indeed, the forbidden moves can clearly be used to render any knotoid trivial.
\begin{figure}[ht]
    \centering
    \includegraphics[width=.6\linewidth]{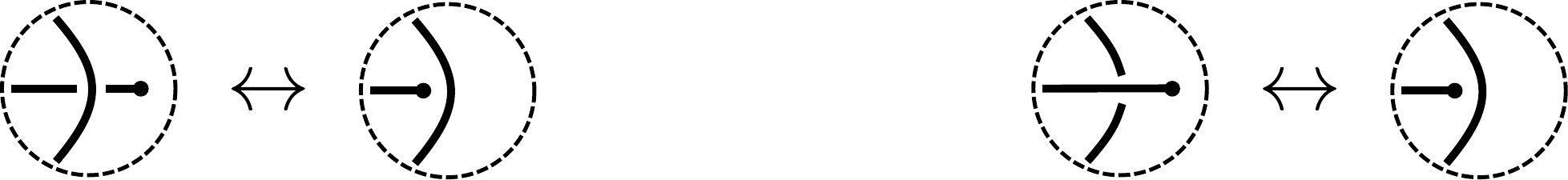}
    \caption{The forbidden moves on knotoid diagrams.}
    \label{fig:forbiddenmoves}
\end{figure}
\end{remark}

We stress that knotoids are \textit{diagrams} on $\Sigma$, and while they are essentially open-ended knot diagrams they do not correspond to open curves in some 3-manifold in the same way that classical knots correspond to closed curves in $S^3$. Instead, the class of three-dimensional objects that knotoids on $\Sigma$ are in one-to-one correspondence with depends on $\Sigma$, as we shall see below. In what follows we focus on $\Sigma\in\{\mathbb{R}^2,S^2\}$:

\begin{definition}
Knotoids on $\mathbb{R}^2$ are called \textbf{planar knotoids}, while those on $S^2$ are referred to as \textbf{spherical knotoids}.
\end{definition}



We now move on to the description of \textbf{geometric realizations} of knotoids, by which we mean sets of objects in three dimensions whose ambient isotopy equivalence classes are in in one-to-one correspondence with knotoids. For example, by Reidemeister's theorem equivalence classes of knot diagrams are in one-to-one correspondence with ambient isotopy classes of knots in $S^3$. We therefore say that knots form the geometric realization of knot diagrams. In knot theory we generally make no distinction and refer to both objects as knots. With knotoids one has to be slightly more careful, since it turns out that knotoid diagrams are not simply projections of their geometric realizations as is the case for knots. Indeed, if this were so then the geometric realization of knotoids would be open curves in $S^3$, but all open curves in $S^3$ are trivial up to ambient isotopy.

Instead, V.~Turaev showed that spherical knotoids are in one-to-one correspondence with \textbf{simple theta-curves} \cite{turaev2012knotoids}, casting these as the geometric realization of spherical knotoids:

\begin{definition}\label{def:thetacurve}
A \textbf{theta-curve} is an embedding $\theta:\Theta\to S^3$ into $S^3$ of the graph $\Theta$ that consists of two vertices $\{v_0,v_1\}$ and three edges $\{e_+,e_0,e_-\}$ between them. We say two theta-curves are \textbf{equivalent} if they can be related by an ambient isotopy of $S^3$ that preserves the labels of the vertices and edges of $\theta$. We say a theta-curve is \textbf{simple} if $\theta(e_+\cup e_-)$ is equivalent to the unknot in $S^3$.
\end{definition}

\begin{remark}
The embeddings $\Theta\hookrightarrow S^3$ defining theta-curves are always assumed to be smooth, in the sense that they are smooth at every point of $\Theta$ that has a neighbourhood homeomorphic to $\mathbb{R}$. More generally in the following we assume all embeddings of spaces $X$ into $S^3$ are smooth at all points where $X$ is locally flat.
\end{remark}

A simple theta-curve $\theta$ is essentially a knotted open curve, namely $\theta(e_0)$, whose end-points are anchored to an unknotted circle in $S^3$. This observation forms the intuition for the following result:

\begin{proposition}
\cite[Thm.~6.2]{turaev2012knotoids} Equivalence classes of simple theta-curves are in one-to-one bijection with spherical knotoids, i.e.~simple theta-curves form the geometric realtization of spherical knotoids.
\end{proposition}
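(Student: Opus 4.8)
The plan is to construct a pair of mutually inverse maps $\Phi$ and $\Psi$ between spherical knotoid diagrams, taken up to equivalence, and simple theta-curves, taken up to equivalence. Fix once and for all a splitting $S^3 = B_+\cup_{S^2} B_-$ of the three-sphere into two closed balls glued along the sphere $S^2$ on which our knotoid diagrams are drawn, together with a standardly embedded circle $C\subset S^2$ and a pair of antipodal points $p_0,p_1\in C$ that cut $C$ into two standard arcs $c_+, c_-$.

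\emph{The forward map $\Phi$.} Given a knotoid diagram $\phi:[0,1]\to S^2$ I would first isotope it in $S^2$ so that $\phi(0)=p_0$ and $\phi(1)=p_1$, then build an embedded arc $e_0$ by keeping $\phi$ inside $S^2$ away from small discs around its crossings while pushing the overstrand slightly into the interior of $B_+$ at each crossing. Taking $e_+:=c_+$ and $e_-:=$ a small pushoff of $c_-$ into the interior of $B_-$, the union $\theta := e_0\cup e_+\cup e_-$ is an embedded copy of $\Theta$; since $e_+\cup e_-$ co-bounds an embedded disc inside $B_-$ it is an unknot, so $\theta$ is a simple theta-curve. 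That $\Phi$ descends to equivalence classes is the easy direction: an ambient isotopy of the diagram in $S^2$ fixing $p_0,p_1$ extends (by isotopy extension) to an ambient isotopy of $S^3$ preserving $B_\pm$ and dragging the caps along, and each $R1,R2,R3$ move is supported in a disc $D\subset S^2$ away from the endpoints, with its two local models joined by an isotopy of the bicollar $D\times[-\varepsilon,\varepsilon]\subset S^3$ fixing the frontier, hence extendable over all of $S^3$ rel the rest of $\theta$.

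\emph{The inverse map $\Psi$.} Given a simple theta-curve $\theta$, I would use simplicity — that $\theta(e_+\cup e_-)$ is an unknot — to choose an ambient isotopy of $S^3$ carrying $\theta(e_+\cup e_-)$ to $C$ with $\theta(v_i)\mapsto p_i$ and $\theta(e_\pm)\mapsto c_\pm$; since a standardly embedded unknot co-bounds a standard disc on each side, I may further normalise a collar of $C$ so that afterwards $\theta(e_0)$ meets $S^2$ only at $p_0,p_1$. After a small isotopy so that $\theta$ avoids the two points of $S^3$ from which $S^2$ is seen radially, projecting $\theta(e_0)$ to $S^2$ and perturbing to a generic immersion with only transverse double points produces a knotoid diagram $\Psi(\theta)$ with leg $p_0$ and head $p_1$.

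\emph{The main obstacle, and the conclusion.} The step I expect to be hardest is proving that $\Psi$ is well defined on equivalence classes: the normalising isotopy and the generic projection both involve choices, and one must show different choices yield knotoid diagrams related only by $R1,R2,R3$ and planar isotopy. This amounts to a relative Reidemeister theorem for arcs in $S^3$ whose two endpoints are pinned to the fixed unknotted circle $C$ — connect two such configurations by a generic one-parameter family of pairs (ambient position, projection) and classify the codimension-one walls, which are exactly the three Reidemeister moves together with isotopies of the diagram. It is precisely here that the forbidden moves of Figure~\ref{fig:forbiddenmoves} are excluded, since sliding an endpoint across a transverse strand would force that endpoint off $C$; this is also the conceptual reason the statement can hold at all, arcs in $S^3$ with unconstrained endpoints all being trivial. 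Granting well-definedness, one then checks $\Psi\circ\Phi=\mathrm{id}$ (the caps built by $\Phi$ are already in normal form, so projecting $e_0$ returns $\phi$ up to $R1,R2,R3$) and $\Phi\circ\Psi=\mathrm{id}$ (re-lifting $\Psi(\theta)$ into $B_+$ and re-attaching the standard caps rebuilds $\theta$ up to the normalising isotopy, run backwards), so that $\Phi$ and $\Psi$ are the desired mutually inverse bijections.
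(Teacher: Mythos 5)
The paper itself gives no proof of this proposition: it is imported from Turaev \cite[Thm.~6.2]{turaev2012knotoids}, and the only argument of this kind in the paper is the sketch of the planar analogue (Proposition \ref{prop:planar_realization}), which uses exactly the lift-and-project correspondence you describe. Your $\Phi$ and $\Psi$ are the standard construction underlying the cited theorem, and your identification of where the forbidden moves of Figure \ref{fig:forbiddenmoves} are excluded is the right conceptual point. So this is the same route, in sketch form, not a new one.

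Two things keep it from being a complete argument. First, a concrete slip in $\Phi$: as written, $e_0$ stays in $S^2$ away from the crossings while $e_+=c_+$ also lies in $S^2$, so every point where the diagram crosses the arc $c_+$ becomes an intersection of the interiors of $e_0$ and $e_+$, and $\theta$ is not embedded. You must either first isotope the diagram so that it meets $C$ only at $p_0,p_1$, or (cleaner, and what the usual construction does) push all of $e_0\smallsetminus\{p_0,p_1\}$ into the interior of $B_+$, resolving the crossings there, and keep both caps $c_+\cup c_-=C$ on $S^2$, where the unknottedness of $e_+\cup e_-$ is manifest. Second, the substantive content of the theorem --- that $\Psi$ is independent of the normalising isotopy and of the projection, i.e.\ the relative Reidemeister-type statement for arcs in $S^3$ with endpoints pinned to the fixed unknot $C$ (including the action of isotopies of $S^3$ preserving $(C,p_0,p_1)$ and the labels of $c_\pm$) --- is only asserted via a transversality/codimension-one-wall outline and then ``granted.'' That step is precisely what Turaev's proof supplies, so your text is a correct skeleton of the cited argument rather than a self-contained proof; carrying out that genericity argument (or quoting it) is what would close the gap.
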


In \cite{gugumcu2017new}, the results from \cite{turaev2012knotoids} on simple theta-curves were adapted to give a geometric realization of planar knotoids. We adjust their result slightly, working in the thickened plane $\mathbb{R}^2\times [-1,1]$ rather than in $\mathbb{R}^3$, and cast this geometric realization in terms of what we shall call \textbf{simple $H$-curves}:

\begin{definition}\label{def:H-curve}
Let $H$ denote the graph with vertex set $\{v_0,v_1,t_0,t_1,b_0,b_1\}$ and edge set $\{(t_i,v_i),(v_i,b_i)\}_{i=0,1}\cup\{(v_0,v_1)\}$; see Figure \ref{fig:H_graph}.
\begin{figure}[ht]
    \centering
    \includegraphics[width=.23\linewidth]{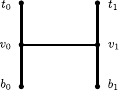}
    \caption{A drawing of the graph $H$.}
    \label{fig:H_graph}
\end{figure}

An \textbf{$H$-curve} is an embedding $\phi:H\to \mathbb{R}^2\times [-1,1]$ such that $\phi(t_i)\in\mathbb{R}^2\times\{1\}$ and $\phi(b_i)\in\mathbb{R}^2\times\{-1\}$ for $i\in\{0,1\}$. Two $H$-curves are said to be \textbf{equivalent} if they can be related by a label-preserving ambient isotopy. We refer to the lines $\phi\left( (t_i,v_i)\cup(v_i,b_i) \right)\subseteq \mathbb{R}^2\times[-1,1]$ for $i\in\{0,1\}$ as the \textbf{auxiliary lines} of the $H$-curve $\phi$.

We say an $H$-curve is \textbf{simple} if both of the $(1,1)$-tangles defined by its auxiliary lines are trivial.
\end{definition}

\begin{proposition}\label{prop:planar_realization}
Simple $H$-curves form the geometric realization of planar knotoids.
\end{proposition}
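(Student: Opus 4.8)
The plan is to reduce the statement to Turaev's theorem (the preceding Proposition on spherical knotoids and simple theta-curves) by relating simple $H$-curves in the thickened plane to simple theta-curves in $S^3$, while keeping careful track of the extra planar data. First I would set up the correspondence at the level of diagrams: given a planar knotoid diagram $\phi\colon[0,1]\to\mathbb{R}^2$, one builds an $H$-curve by placing the image of $\phi$ in $\mathbb{R}^2\times\{0\}$ (resolving crossings by small pushoffs into the $[-1,1]$ direction in the usual way), sending $v_0,v_1$ to the leg and head $\phi(0),\phi(1)$, and attaching the four pendant edges $(t_i,v_i)$ and $(v_i,b_i)$ as straight vertical arcs running to $\mathbb{R}^2\times\{1\}$ and $\mathbb{R}^2\times\{-1\}$ respectively, in a small neighbourhood of the endpoints where $\phi$ has no other strands. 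Because these vertical arcs are unknotted and unlinked from everything, the two $(1,1)$-tangles cut out by the auxiliary lines are trivial, so the resulting $H$-curve is simple. I would then check that this assignment is well-defined on equivalence classes: each of the Reidemeister moves $R1,R2,R3$ performed away from the endpoints extends to an ambient isotopy of $\mathbb{R}^2\times[-1,1]$ fixing $\mathbb{R}^2\times\{\pm1\}$ pointwise, and an ambient isotopy of $\mathbb{R}^2$ likewise extends. Conversely, from a simple $H$-curve one recovers a planar knotoid diagram by projecting $\phi((v_0,v_1))$ to $\mathbb{R}^2$ along the $[-1,1]$ coordinate, using the triviality of the auxiliary $(1,1)$-tangles to first isotope the auxiliary lines to be vertical (this is where simplicity is used) and to make the projection of $\phi((v_0,v_1))$ generic.

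The core of the argument is then to show these two assignments are mutually inverse on equivalence classes. In one direction this is essentially a regular-neighbourhood / general-position argument in the thickened plane; in the other it is the observation that distinct planar knotoid diagrams related by an isotopy of the thickened plane must already be related by Reidemeister moves and planar isotopy, once the endpoints are pinned down by the vertical auxiliary lines. To organize this cleanly I would actually route the proof through the spherical picture: capping $\mathbb{R}^2\times[-1,1]$ off to a ball inside $S^3$, an $H$-curve determines a spatial graph in $S^3$, and collapsing each pair of auxiliary lines $(t_i,v_i)\cup(v_i,b_i)$ together with an arc in the complementary region of $S^3$ produces a theta-curve whose edge $e_0$ is $\phi((v_0,v_1))$; simplicity of the $H$-curve translates exactly into simplicity of this theta-curve. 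The planar (as opposed to spherical) refinement is precisely the memory of \emph{which} region of the complementary $2$-sphere the auxiliary lines escaped through — equivalently, a choice of point at infinity relative to the knotoid — and I would phrase this as: simple $H$-curves up to equivalence $=$ simple theta-curves equipped with the extra framing data distinguishing $e_+$ from $e_-$ consistently, $=$ planar knotoids, invoking Turaev's bijection for the middle equality and \cite{gugumcu2017new} for the shape of the argument (with the modification that we work in $\mathbb{R}^2\times[-1,1]$, where the two auxiliary lines naturally land on the two distinguished boundary planes rather than requiring an artificial choice in $\mathbb{R}^3$).

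The main obstacle I anticipate is not any single computation but the bookkeeping around the forbidden moves and the point at infinity: I must make sure that the freedom to slide an endpoint of the knotoid "over the sphere" — which is exactly what distinguishes spherical from planar knotoids, and which corresponds to dragging an auxiliary line across the diagram — is \emph{not} inadvertently introduced by some ambient isotopy of $\mathbb{R}^2\times[-1,1]$. This is precisely where working in the thickened plane with its two distinguished boundary components $\mathbb{R}^2\times\{\pm1\}$ pays off: an ambient isotopy is required to preserve these boundary components setwise (and their labels), so the $t_i$ can never be dragged to where the $b_i$ are, and the auxiliary lines cannot be pushed "around the back." I would make this rigorous by a careful statement of the allowed isotopies and by checking that the diagrammatic moves realizing equivalence of $H$-curves are generated by Reidemeister moves on the planar projection plus isotopies of $\mathbb{R}^2$ — i.e.\ that no new move beyond these is available, which is the content that upgrades \cite{gugumcu2017new}'s $\mathbb{R}^3$ picture to the cleaner thickened-plane picture used here.
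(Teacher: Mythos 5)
Your first paragraph reproduces exactly the two maps the paper uses (draw the diagram in $\mathbb{R}^2\times\{0\}$ and attach vertical auxiliary lines; conversely straighten the auxiliary lines and project the middle edge), and the paper itself only sketches these maps and defers the proof that they are well-defined mutual inverses to \cite{gugumcu2017new}. The gap is in your proposed route for that deferred step: capping $\mathbb{R}^2\times[-1,1]$ off to a ball in $S^3$, collapsing the auxiliary lines to obtain a simple theta-curve, and invoking Turaev's bijection. Once you pass to $S^3$, ambient isotopy no longer preserves the two boundary planes $\mathbb{R}^2\times\{\pm 1\}$, so an isotopy of $S^3$ can carry the collapsed auxiliary edge ``around the back'' of the diagram --- which is precisely the move that identifies distinct planar knotoids having the same spherical type. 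Note also that the theta-curves in Turaev's theorem already carry labels on $e_+,e_0,e_-$ (Definition \ref{def:thetacurve}), so ``extra data distinguishing $e_+$ from $e_-$'' adds nothing: if your claimed identification of simple $H$-curves with labelled simple theta-curves (plus finite framing-type data) were correct, Turaev's bijection would force planar knotoids to be determined by their spherical type together with such data. That is false in the form you need: the planar knotoids $3_1$ and $3_{16}$ are inequivalent but have the same spherical type, hence yield equivalent labelled simple theta-curves under your collapse, and no labelling of $e_\pm$ can separate them. The planar information is the relative position of the auxiliary lines with respect to the two boundary planes (equivalently, which complementary region contains the point at infinity), and this is exactly what the cap-off destroys.

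Your third paragraph correctly identifies this danger, but the remedy you give --- that the boundary components are preserved so the auxiliary lines cannot be pushed around the back --- is only available as long as you stay inside $\mathbb{R}^2\times[-1,1]$, which is incompatible with the $S^3$/theta-curve reduction that your second paragraph relies on. To close the argument you would either have to carry out the general-position and Reidemeister-type analysis directly in the thickened plane (this is the content of \cite{gugumcu2017new}, to which the paper defers), or else formulate and prove a genuinely new bijection between simple $H$-curves and simple theta-curves equipped with the correct extra datum (a marked complementary region, not a framing of $e_\pm$); as written, neither is done, so the reduction to the spherical case does not go through.
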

\begin{proof}
(Sketch): The bijection used to prove the proposition is as follows: given a simple $H$-curve $\phi$, apply an ambient isotopy of $\mathbb{R}^2\times [-1,1]$ that turns the auxiliary lines into straight lines $\{p\}\times[-1,1]$ and $\{q\}\times[-1,1]$ for some $p,q\in\mathbb{R}^2$. Then project the open curve $\phi\left( (v_0,v_1) \right)$ onto $\mathbb{R}^2\times\{0\}$ while recording over/under-crossing data to obtain a knotoid diagram (making some small deformations in case the projection happens to contain triple points or cusps). Conversely, given a planar knotoid diagram $\phi:I\to \mathbb{R}^2$ we turn it into a simple $H$-curve by drawing it on the plane $\mathbb{R}^2\times\{0\}$ inside $\mathbb{R}^2\times [-1,1]$, turning the diagram into an smooth embedding of $I$ by removing its double points using deformations in small neighbourhoods of the double points as specified by the over/under-crossing data, and attaching the auxiliary lines $l=\{\varphi(0)\}\times [-1,1]$ and $h=\{\varphi(1)\}\times [-1,1]$ to the leg and head respectively.

This process is depicted in Figure \ref{fig:planar_bijection}. The proof that this indeed describes a well-defined bijection is given in \cite{gugumcu2017new}.
\begin{figure}[ht]
    \centering
    \includegraphics[width=.7\linewidth]{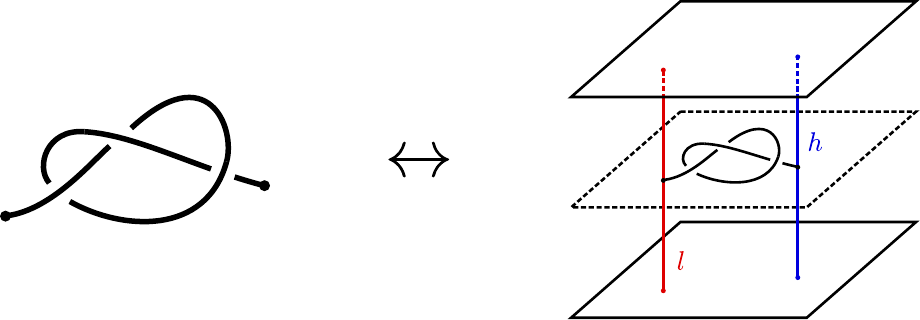}
    \caption{The bijection used in the proof of Proposition \ref{prop:planar_realization}.}
    \label{fig:planar_bijection}
\end{figure}
\end{proof}

\subsection{Biframed Knotoids}

As stated in the introduction, the classification of planar knotoids with 5 crossings is complete with the exception of 6 unresolved pairs \cite{goundaroulis2019systematic}. In section \ref{sec:invariants} we will construct a new planar knotoid invariant that can distinguish some of these pairs. However this will turn out to be an invariant of `biframed' knotoids, which were defined in \cite{moltmaker}. While \cite{moltmaker} focused on biframed spherical knotoids, in this paper we will need to work with biframed \textit{planar} knotoids. In this subsection we define these, and give their geometric realization.

\begin{definition}
\cite{moltmaker} A \textbf{framed knotoid} is an equivalence class of knotoid diagrams under the equivalence generated by the same moves that are allowed for knotoids, but with the first Reidemeister move $R1$ replaced by the \textbf{weakened} first Reidemeister move $R1'$, depicted in Figure \ref{fig:weakreid}. The \textbf{framing} of a framed knotoid is defined to be the writhe of any of its representative diagrams and is denoted $\text{fr}(K)$.
\begin{figure}[ht]
    \centering
    \includegraphics[width=.22\linewidth]{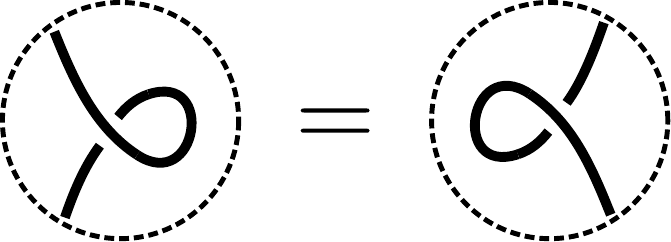}
    \caption{The weakened first Reidemeister move $R1'$.}
    \label{fig:weakreid}
\end{figure}
\end{definition}

\begin{definition}
Fix a vector $\vec{v}$ on the plane. A \textbf{biframed planar knotoid diagram} is a framed planar knotoid diagram $\phi:I\to S^2$ such that the tangent vectors of $\phi$ at $0,1\in I$ are parallel to $\vec{v}$. A \textbf{biframed planar knotoid} is an equivalence class of biframed planar knotoid diagrams under the equivalence generated by $R1',R2,R3$, and ambient isotopies of $\mathbb{R}^2$ that do not change the tangent vector directions of $\phi$ at $\{0,1\}\in I$, i.e.~smooth deformations of the biframed knotoid diagram such that the resulting diagram at each stage of the deformation is still biframed.
\end{definition}

Note that biframed knotoids are in particular framed. To justify the terminology `biframed' we show that restricting the tangent vectors of a knotoid at its leg and head amounts to attaching another integer to the knotoid. Later, we shall see from the geometric realization of biframed planar knotoids that this coframing really corresponds to a framing, namely of the auxiliary lines of a simple $H$-curve.

\begin{definition}\label{def:coframing}

Let $K$ be a biframed planar knotoid diagram. 
Then we define the \textbf{coframing} of $K$ by
\[
    \text{cofr}(K) = n_0-n_1.
\]
where $n_0$ is the winding number of $K$ with respect to the leg and $n_1$ is the winding number with respect to the head.

The \textbf{biframing} of $K$ is defined to be the pair $\left(\text{fr}(K),\text{cofr}(K)\right)$.
\end{definition}

In the above definition the knotoid diagram $K$ is oriented from leg to head and it should be clear that the winding numbers $n_0,n_1$ are integers so that the biframing is a pair of integers. 

\begin{lemma}\label{lm:cf_welldef}
The coframing of a biframed planar knotoid is well-defined.
\end{lemma}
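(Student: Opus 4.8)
The plan is to verify that $\text{cofr}(K)=n_0-n_1$ is unchanged under each of the moves generating the equivalence on biframed planar knotoid diagrams: the Reidemeister moves $R1'$, $R2$, $R3$, and ambient isotopies of $\mathbb{R}^2$ preserving the tangent directions of $K$ at the leg and head. I would work with the description of the winding numbers that makes it clear they are integers: orienting $K$ from leg to head, $n_0$ is the algebraic (signed) number of transverse intersections of $K$ with a ray $\rho_0$ from the leg directed parallel to $\vec{v}$ and pointing away from $K$, and $n_1$ is the analogous count for the corresponding ray $\rho_1$ at the head. Since $K$ is biframed its tangent at each endpoint is parallel to $\vec{v}$, hence to the supporting line of the relevant ray; so near the endpoint $K$ is tangent to that line and contributes nothing to the count there, while the finitely many other intersections (transverse after a harmless perturbation of $K$ away from the endpoints) each contribute $\pm1$. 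In particular $n_0,n_1\in\mathbb{Z}$. Equivalently $n_0$ and $n_1$ are ordinary winding numbers of a loop obtained by closing $K$ up through the point at infinity, measured about points lying just off $K$ behind the leg and the head; I would keep this second picture in reserve.

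The Reidemeister cases are the easy part. Each of $R1'$, $R2$, $R3$ is performed inside a disk $D\subseteq\mathbb{R}^2$ containing neither the leg nor the head — a Reidemeister move involving an endpoint is one of the forbidden moves of Figure \ref{fig:forbiddenmoves} — and inside $D$ only the crossing information and the local shape of a strand change, so $K$ is altered by a homotopy supported in $D$ and fixing $\partial D$. Neither $n_0$ nor $n_1$ can change under such a homotopy, whether one views it as a signed count against the fixed rays $\rho_0,\rho_1$ or as a winding number about a point behind the leg or head, because the leg, the head, and those reference points all lie outside $D$. Hence $\text{cofr}$ is unchanged. (Even the unrestricted move $R1$ would be harmless here; the weakening to $R1'$ is needed only so that the framing $\text{fr}$ is well-defined, which is not in question.)

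The substance of the lemma is invariance under tangent-preserving isotopies. Let $\{K_s\}_{s\in[0,1]}$ be such an isotopy, with rays $\rho_0^s,\rho_1^s$ varying continuously with $s$. The integer $n_0$ is locally constant in $s$ and can only jump where the combinatorial pattern of $K_s\cap\rho_0^s$ changes, which occurs only if (i) an interior arc of $K_s$ becomes tangent to $\rho_0^s$, which creates or destroys a pair of intersection points of opposite sign and so leaves the signed count alone, or (ii) the head crosses the supporting line $\ell_0^s$ of $\rho_0^s$; no intersection of $K_s$ with $\rho_0^s$ can disappear through the endpoint of $\rho_0^s$, which is the leg, since $K_s$ never passes through its own leg, and nothing escapes to infinity because $K_s$ is compact. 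In case (ii) the biframed hypothesis is decisive: the tangent of $K_s$ at the head is parallel to $\vec{v}$, hence to $\ell_0^s$, so as the head crosses $\ell_0^s$ the nearby arc of $K_s$ is tangent to $\ell_0^s$ at the moving endpoint, and therefore meets $\ell_0^s$ near the head either not at all or in two oppositely-signed points lying symmetrically about the head; and since $K_s$ is a valid diagram the head never coincides with the leg, so once $s$ is close enough to the crossing instant those two points lie on the same side of the leg along $\ell_0^s$ — hence either both on $\rho_0^s$ or both off it — and contribute $0$ to $n_0$ in either case. Thus $n_0$ does not jump, and the symmetric argument with the roles of leg and head interchanged shows $n_1$ does not jump; therefore $\text{cofr}=n_0-n_1$ is invariant.

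The step I expect to be the main obstacle is case (ii): making the local model precise as the head sweeps across the leg's ray, and checking that the tangency forced by the biframed condition, together with the impossibility of the head meeting the leg, really does annihilate the would-be $\pm1$ jump in $n_0$. This is exactly the point at which the biframing hypothesis and the requirement that isotopies preserve endpoint tangent directions are used, and where one sees that each of $n_0$ and $n_1$ — not just their difference — is well-defined. (If one computes instead with genuine winding numbers of the closed-up loop, the parallel subtlety is that the closure arc through infinity must be kept disjoint from the two reference points throughout the isotopy; and once the geometric realization of biframed planar knotoids has been set up, $\text{cofr}$ can alternatively be identified with the difference of the framings of the two auxiliary lines of the associated simple $H$-curve, which is plainly an ambient-isotopy invariant — but the combinatorial argument above is self-contained at this stage.)
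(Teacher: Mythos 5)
Your argument breaks down exactly at the step you flagged as the main obstacle, case (ii), and the break is fatal rather than technical. At the instant the head meets the line $\ell_0^s$ through the leg, the nearby piece of $K_s$ is not a full branch tangent to $\ell_0^s$ but a \emph{half}-branch terminating at the head: in the generic local model the arc is $\{(x_h-u,\,y_h+cu^2):u\in[0,\epsilon]\}$, which meets the line in no points when $y_h$ has one sign and in exactly \emph{one} transverse point when it has the other. So there are no ``two oppositely-signed points lying symmetrically about the head''; when the head sweeps across the ray $\rho_0^s$ itself, the signed count $n_0$ genuinely jumps by $\pm1$. In fact it must: a full turn of the head around the leg, performed while keeping both endpoint tangents parallel to $\vec{v}$ at every stage, is an allowed ambient isotopy (this is precisely the orbiting move of Figure \ref{fig:orbiting_move}), and one can check explicitly --- e.g.\ by realizing it as a twist supported in an annulus about the leg composed with the compensating local rotation at the head forced by the tangent condition --- that it changes $n_0$ by $\pm1$ and simultaneously changes $n_1$ by $\pm1$, leaving only the coframing unchanged. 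Your stronger conclusion, that each of $n_0$ and $n_1$ separately is an invariant, is therefore false, and it would contradict the paper's own framework: if it were true, the coframing identities of Figure \ref{fig:coframing_identities} would fail and the bijection of Lemma \ref{lm:diagrambijection} would have to involve $\mathbb{Z}^3$ rather than $\mathbb{Z}^2$.

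The Reidemeister and endpoint-avoiding-isotopy parts of your argument are fine and agree with the paper (there these cases are dismissed as immediate). What is missing is exactly the global cancellation between the two endpoints, which is the actual content of the lemma and of the paper's proof: after normalizing to diagrams with leg and head on a common line in the direction of $\vec{v}$, any two representatives differ by $R1',R2,R3$, isotopies fixing neighbourhoods of the endpoints, and orbiting moves; an orbiting move is then reduced (Figure \ref{fig:orbiting_reduction}) to a ``coframing exchange'' in which a loop appears at the leg and a loop at the head, contributing $+1$ and $-1$ to $\mathrm{cofr}$ respectively, so that the jumps in $n_0$ and $n_1$ cancel in the difference. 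If you want to keep your transversality-style approach, you would have to replace the claim ``case (ii) contributes $0$ to $n_0$'' by a bookkeeping argument showing that every event in which the head crosses the leg's ray (changing $n_0$ by $\pm1$) is matched, over the course of the isotopy, by a compensating change in $n_1$ coming from the compensating rotation of the strand at the head forced by the fixed-tangent condition; as written, no such matching is established.
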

\begin{proof}
Let $\kappa$ be a diagram of $K$ whose leg and head lie on the same directed line that has the direction of $\vec{v}$, with the leg coming before the head on this line. As any diagram for $K$ is related to some such $\kappa$ by an ambient isotopy preserving lines perpendicular to $\vec{v}$, it suffices to show that $\text{cofr}(K)$ is independent of the chosen diagram $\kappa$. 


Two choices of $\kappa$ can clearly be related by applications of $R1',R2,R3$, ambient isotopies fixing neighbourhoods of the leg and head, and applications of the `orbiting' move depicted in Figure \ref{fig:orbiting_move} (or its inverse), which is obtained by a full turn of the head of a biframed knotoid around its leg. Thus it suffices to show invariance of $\text{cofr}(K)$ under orbiting moves, as invariance under the others is immediate.
\begin{figure}[ht]
    \centering
    \includegraphics[width=.5\linewidth]{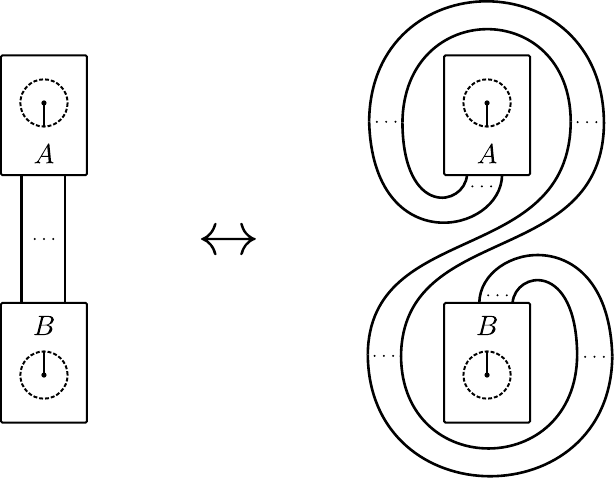}
    \caption{The `orbiting' move on a biframed knotoid diagram. Here the boxes labelled $A$ and $B$ are pieces of a knotoid diagram that are identical on both sides of the move, and two parallel strands with dots between them signify any number of parallel strands.}
    \label{fig:orbiting_move}
\end{figure}

As is shown in Figure \ref{fig:orbiting_reduction}, any orbiting move can be reduced to a `coframing exchange' between the end-points of a biframed knotoid diagram using ambient isotopies fixing neighbourhoods of the leg and head. Invariance of $\text{cofr}(K)$ under coframing exchange on the end-points is immediate, since the loops around the end-points on the right-hand side of Figure \ref{fig:orbiting_reduction} contribute $+1$ and $-1$ to $\text{cofr}(K)$ respectively.
\begin{figure}[ht]
    \centering
    \includegraphics[width=.9\linewidth]{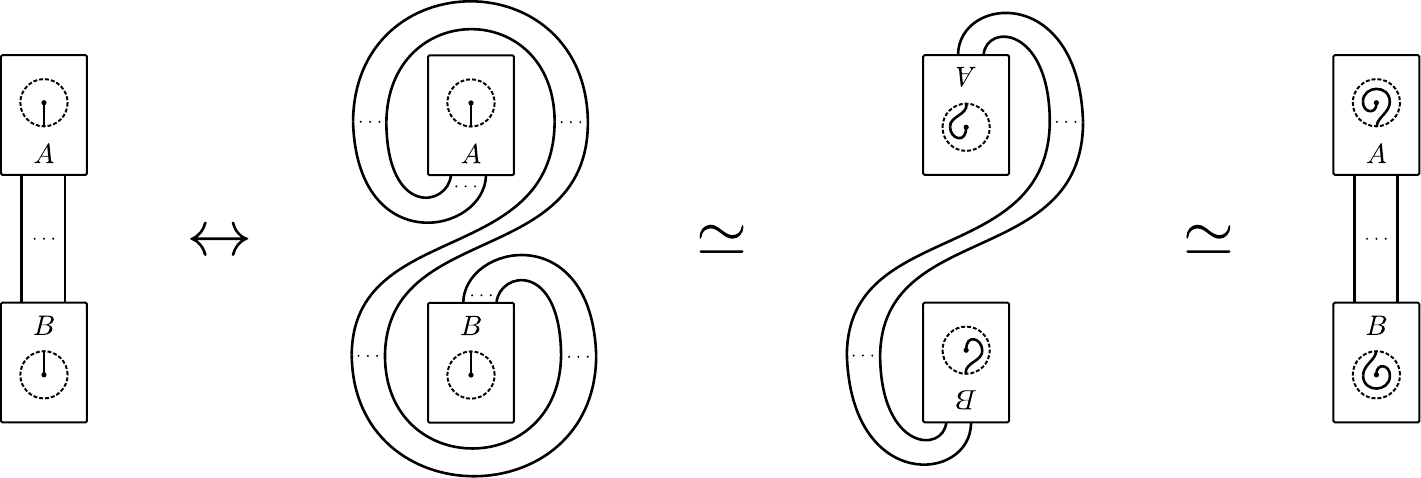}
    \caption{Sequence of biframed knotoid diagram equivalences showing that an arbitrary orbiting move is equivalent to an coframing exchange (right) on the end-points.}
    \label{fig:orbiting_reduction}
\end{figure}
\end{proof}

\begin{remark}
From Definition \ref{def:coframing} and the proof of Lemma \ref{lm:cf_welldef} it is clear that biframed knotoids can equivalently be described as knotoid diagrams whose leg and head must lie at fixed points $p,q\in\mathbb{R}^2$ and whose tangent vectors at $p,q$ lie in the direction $q-p$, up to the equivalence generated by $R1',R2,R3$, ambient isotopy relative to small neighbourhoods of $p,q$, and the \textbf{coframing identities}, depicted in Figure \ref{fig:coframing_identities}. This is the perspective on biframed knotoids taken in \cite{moltmaker}.


Under this approach orbiting moves are no longer allowed, as these are ambient isotopies that move the head or leg of a diagram. So to ensure that Lemma \ref{lm:cf_welldef} still holds the coframing identities must be imposed separately.
This approach has the advantage of reducing the complexity introduced by the coframing to two simple diagrammatic moves. This can be helpful for proving that some quantity associated to a knotoid diagram is an invariant of biframed knotoids.
\begin{figure}[ht]
    \centering
    \includegraphics[width=.7\linewidth]{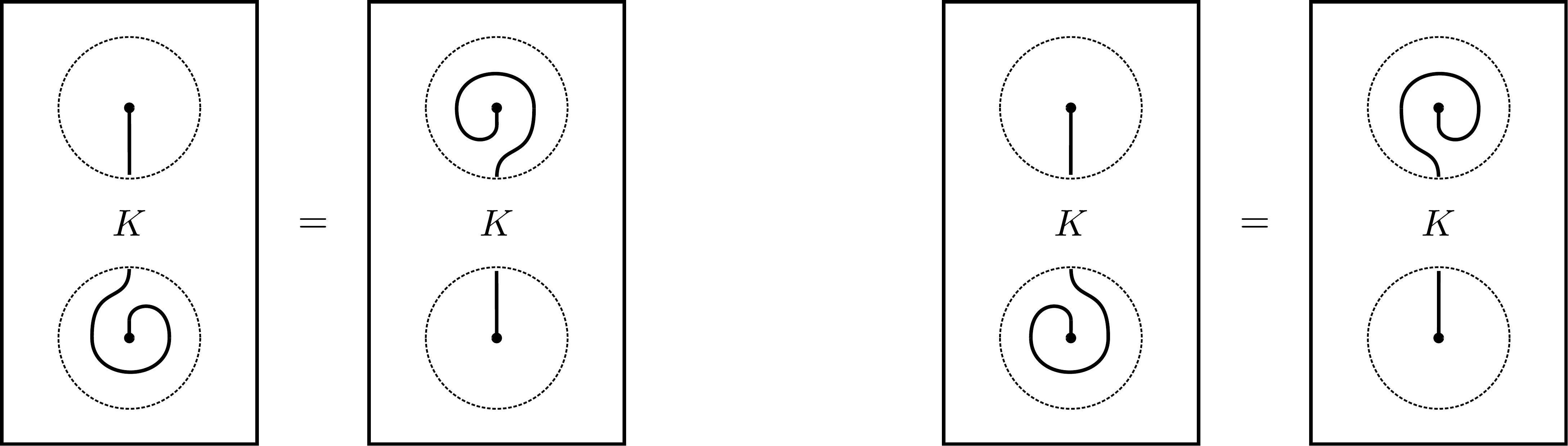}
    \caption{\cite{moltmaker} The coframing identities. Here `$K$' denotes the rest of some knotoid diagram that is understood to be identical on both sides of the identities.}
    \label{fig:coframing_identities}
\end{figure}
\end{remark}

\begin{remark}\label{rk:coframing_comp}
The coframing of a biframed knotoid diagram is clearly invariant under crossing changes. Thus generally the easiest way to compute coframing by hand is to apply crossing changes and knotoid diagram equivalences to a diagram until it is a trivial biframed knotoid, for which the coframing can easily be read off from the diagram.
\end{remark}

As the terminology suggests, the information of a biframed knotoid is equivalent to that of a knotoid with two integers attached:

\begin{lemma}\label{lm:diagrambijection}
There is a bijection
\begin{align}
    \{\text{Biframed planar knotoids}\} &\leftrightarrow \{\text{Planar knotoids}\}\times \mathbb{Z}^2 \\
    K &\mapsto \left(K,\text{fr}(K),\text{cofr}(K)\right). \nonumber
\end{align}
\end{lemma}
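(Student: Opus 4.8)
The plan is to fix an underlying planar knotoid $k$, show that the fibre over $k$ of the forgetful map $\{\text{biframed planar knotoids}\}\to\{\text{planar knotoids}\}$ is a torsor over $\mathbb Z^2$ --- with the two $\mathbb Z$-factors acting by inserting kinks and by inserting small loops encircling an end-point --- and check that under this torsor structure the pair $(\text{fr},\text{cofr})$ is an affine coordinate. Ranging over all $k$ then produces the stated bijection.

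First I would record that $K\mapsto(K,\text{fr}(K),\text{cofr}(K))$ is well defined. Forgetting the biframing is well defined because every biframed equivalence is in particular an equivalence of ordinary knotoid diagrams: $R1'$ is a consequence of the Reidemeister moves $R1,R2,R3$, and a biframed ambient isotopy is an ambient isotopy of $\mathbb R^2$. The framing is the writhe, which is unchanged by $R1',R2,R3$ (this being the point of $R1'$), and the coframing is well defined by Lemma \ref{lm:cf_welldef}.

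For the torsor structure I would use the fixed-end-point description of biframed knotoids from the remark following Lemma \ref{lm:cf_welldef}: diagrams whose leg and head lie at fixed points $p,q$ with tangents in the direction $q-p$, modulo $R1',R2,R3$, ambient isotopy relative to neighbourhoods of $p,q$, and the coframing identities. An ordinary planar knotoid is presented by the very same class of diagrams, but modulo the larger equivalence in which $R1'$ is strengthened to $R1$ and in which the end-points may be dragged and their tangent directions rotated. The extra freedom of the latter equivalence is generated by (i) inserting or deleting a kink (the gap between $R1$ and $R1'$); (ii) rotating the tangent at an end-point through $2\pi$, i.e.\ inserting or deleting an end-point loop in which the strand winds once around that end-point; and (iii) orbiting the head around the leg --- but by the reduction in the proof of Lemma \ref{lm:cf_welldef} this last move is exactly the coframing identity, hence already a biframed move, and that same identity trades a loop around the head for a loop of the opposite sense around the leg. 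Hence two biframed knotoids lie over the same planar knotoid precisely when they differ by finitely many insertions of kinks and of leg loops, and this defines a transitive action of $\mathbb Z^2$ on the fibre (with $(a,b)$ inserting $a$ positive kinks and $b$ counter-clockwise leg loops; associativity and commutativity of these insertions are routine, and they visibly preserve the underlying planar knotoid). The action is free: a kink can be placed away from both end-points, so it changes $\text{fr}$ by $\pm 1$ and leaves $\text{cofr}$ unchanged, whereas a leg loop can be drawn embedded and without moving the tangent at $p$, so it changes $\text{cofr}$ by $\pm 1$ and leaves $\text{fr}$ unchanged; thus $(a,b)\cdot K=K$ forces $a=b=0$ by the invariance of $\text{fr}$ and $\text{cofr}$ already noted. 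Since the fibre is non-empty --- any diagram of $k$ can be ambient-isotoped to be biframed --- choosing a base-point $K_0$ yields a bijection $\mathbb Z^2\to\{\text{biframed knotoids over }k\}$, $(a,b)\mapsto (a,b)\cdot K_0$, under which $(\text{fr},\text{cofr})$ becomes $(a,b)\mapsto(\text{fr}(K_0)+a,\ \text{cofr}(K_0)+b)$ --- a bijection onto $\mathbb Z^2$. Assembling these bijections over all planar knotoids $k$ gives the lemma.

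The step I expect to be the main obstacle is the transitivity claim: that on fixed-end-point diagrams the discrepancy between ordinary and biframed equivalence is generated by exactly the three moves listed. Making this rigorous amounts to analysing how an ambient isotopy of $\mathbb R^2$ can move the two end-points and rotate their tangents, and verifying that, after correcting it by a biframed isotopy relative to the end-points, the only residual information is an integer winding of the two end-points about one another together with an integer rotation count of each tangent --- all of which are realised by end-point loops --- while the passage from $R1'$ to $R1$ contributes only kinks. This is the planar analogue of the corresponding statement for spherical biframed knotoids established in \cite{moltmaker}, and I would either invoke and adapt that argument or carry out the isotopy-extension bookkeeping directly.
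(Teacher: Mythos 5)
Your argument is correct in outline, and it is worth noting that the paper itself gives no argument at all here: its proof is the single sentence that the statement is proved identically to the spherical case in \cite{moltmaker}. Your torsor formulation -- the fibre of the forgetful map over a fixed planar knotoid is a $\mathbb{Z}^2$-set via kink insertion and leg-loop insertion, freeness follows from the invariance of $\mathrm{fr}$ and $\mathrm{cofr}$ (using Lemma \ref{lm:cf_welldef}), and surjectivity is clear -- is a faithful expansion of what that citation is standing in for, and your handling of the orbiting move via the reduction in the proof of Lemma \ref{lm:cf_welldef} (equivalently the coframing identities, which also trade head loops for leg loops) is the right mechanism. The one step you do not actually prove is the one you flag yourself: transitivity, i.e.\ that the discrepancy between ordinary planar equivalence and biframed equivalence on fixed-end-point diagrams is generated exactly by kinks, end-point loops, and orbiting, which requires the isotopy-extension bookkeeping you mention. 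Since you propose to adapt precisely the argument of \cite{moltmaker} for this, your proposal is at the same level of completeness as the paper's own proof, with the added benefit of making explicit where the two integers enter and why $(\mathrm{fr},\mathrm{cofr})$ is an affine coordinate on each fibre.
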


\begin{proof}
This proof is identical to that of the analogous statement for biframed spherical knotoids, which was given in \cite{moltmaker}.
\end{proof}

In subsequent sections we will need to work with explicitly oriented biframed planar knotoids. One caveat with doing so is that, given an oriented biframed knotoid diagram $K$, its `reverse' $-K$ is not well-defined. Namely, simply reverting the orientation of such a diagram to obtain $-K$ changes the tangent directions at its endpoints, so that $-K$ is no longer a biframed knotoid diagram. Instead, we define $-K$ as follows:

\begin{definition}\label{def:reversal}
Let $K$ be an oriented biframed planar knotoid diagram. We define its \textbf{reverse} $-K$ to be the biframed planar knotoid diagram given by reversing the orientation of $K$ and adding `hooks' to the end-points as depicted in Figure \ref{fig:hooks} to ensure the tangent vectors of $-K$ at the end-points are in the right direction. The hooks in Figure \ref{fig:hooks} are added in such a way that $-K$ has the same coframing as $K$. In fact the hooks can be added another way, instead attaching the other end of each hook to the end-points of $\overline{K}$. This gives another description of $-K$ with coframing equal to that of $K$. This alternative description is seen to be equivalent to the definition of $-K$ given in Figure \ref{fig:hooks} by applying the orbiting isotopy from Figure \ref{fig:orbiting_move}.
\begin{figure}[h]
    \centering
    \includegraphics[width=.5\linewidth]{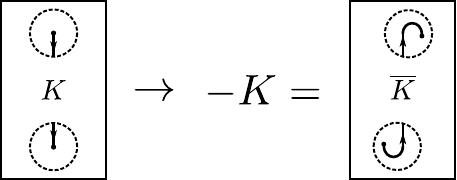}
    \caption{Definition of $-K$. Here the box labeled `$K$' denotes the portion of $K$ that is not pictured, and similarly $\overline{K}$ denotes the diagram $K$ with reversed orientation.}
    \label{fig:hooks}
\end{figure}
\end{definition}

To conclude this section we discuss the geometric realization of biframed planar knotoids. In doing so we will see how coframing, now expressed in terms of winding numbers, indeed corresponds to a framing in the same sense as for framed knots.

\begin{definition}
Let $\overline{H}$ be the topological space formed by thickening the edges of the graph $H$ depicted in Figure \ref{fig:H_graph} into three ribbons, as depicted in Figure \ref{fig:Hbar}. Also fix a direction $\vec{v}\in \mathbb{R}^2$.
\begin{figure}[ht]
    \centering
    \includegraphics[width=0.8\linewidth]{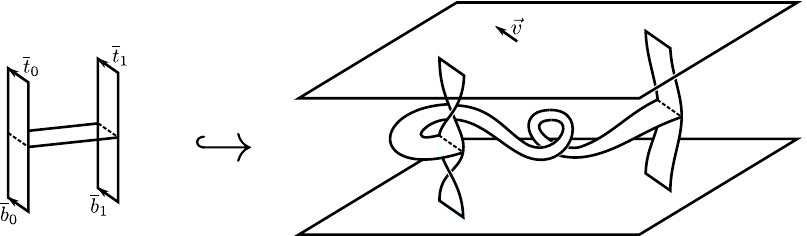}
    \caption{An embedding of $\overline{H}$ (left) yielding a biframed $H$-curve (right).}
    \label{fig:Hbar}
\end{figure}

A \textbf{biframed $H$-curve} is an embedding of $\overline{H}$ into $\mathbb{R}^2\times[-1,1]$ such that the images of the directed line pieces $\overline{t}_0$ and $\overline{t}_1$ depicted in Figure \ref{fig:Hbar} are line pieces in $\mathbb{R}^2\times\{1\}$ in the direction of $\vec{v}$, and the directed line pieces $\overline{b}_0$ and $\overline{b}_1$ are line pieces in $\mathbb{R}^2\times\{-1\}$ in the direction of $\vec{v}$.

We say two $H$-curves $A,B$ are \textbf{equivalent} if they can be related by a label-preserving ambient isotopy $F:(\mathbb{R}^2\times[-1,1])\times [0,1]\to \mathbb{R}^2\times[-1,1]$ such that the embedding $F(-,t)\circ A:\overline{H}\hookrightarrow \mathbb{R}^2\times[-1,1]$ defines a biframed $H$-curve for all $t\in[0,1]$.

In analogy with Definition \ref{def:H-curve}, we refer to the images of the vertical ribbons in the depiction of $\overline{H}$ given in Figure \ref{fig:Hbar} as the \textbf{auxiliary ribbons} of a biframed $H$-curve. 
\end{definition}

As with biframed planar knotoids, we assign a framing and coframing to biframed $H$-curves. It suffices to do so for the subclass of `standard' biframed $H$-curves.

\begin{definition}\label{def:standardHcurve}
A biframed $H$-curve $A$ is said to be \textbf{standard} if the following conditions hold:
\begin{itemize}
    \item The framed $(1,1)$-tangles corresponding to the auxiliary ribbons of $A$ are trivial and unframed (i.e.~have framing $0$).
    \item The auxiliary ribbons of $A$ are in manifestly unframed form, i.e.~are both of the form $[-\epsilon,\epsilon]\times [-1,1]$ where $[-\epsilon,\epsilon]\times \{i\}$ lies in $\mathbb{R}^2\times \{i\}$ along the direction of $\vec{v}$ for $i\in\{-1,1\}$.
    \item Let $\overline{r}$ denote the horizontal ribbon in the depiction of $\overline{H}$ in Figure \ref{fig:Hbar}, and let $a_0$ and $a_1$ be its attaching arcs to the vertical ribbons on the left and right, respectively. Then $A(a_i)\subseteq \mathbb{R}^2\times \{0\}$ for $i\in\{0,1\}$, and some neighbourhoods in $A(\overline{r})$ of $A(a_0)$ and $A(a_1)$ lie in $\mathbb{R}^2\times \{0\}$.
    \item Let $r$ denote one of the horizontal boundary components of $r$. Then the tangents to $A(r)$ at $A(a_i)$ are perpendicular to $A(a_i)$ for $i\in\{0,1\}$.
\end{itemize}
\end{definition}

\begin{definition}
Let $A$ be a standard biframed $H$-curve. Parametrize the arc $A(r)$ by $A(r)_t$ for $t\in[0,1]$ such that $A(r)_i\in A(a_i)$ for $i\in\{0,1\}$. For $t\in[0,1]$ we define $v_t$ to be the vector at $A(r)_t$ perpendicular to $A(r)$ pointing into $A(\overline{r})$. Running along $A(r)$ from $A(a_0)$ to $A(a_1)$, the collection $\{v_t\}_{t\in[0,1]}$ defines a path in $SO(2)$, which is a loop by the third assumption in Definition \ref{def:standardHcurve}. The corresponding element of $\pi_1(SO(2))\cong \mathbb{Z}$ is defined to be the \textbf{framing} of $A$, and is denoted $\text{fr}(A)$.

Next consider for all $t\in[0,1]$ the vectors in $\mathbb{R}^2\times [-1,1]$ from $A(r)_0$ and $A(r)_1$ to $A(r)_t$. Project these vectors onto $\mathbb{R}^2\times\{0\}$ and normalize them. This defines paths $p_0$ and $p_1$ in $SO(2)$ as before. If the tangents to $A(r)$ from the fourth assumption in Definition \ref{def:standardHcurve} are in the same direction in $\mathbb{R}^2\times\{0\}$, then these paths are loops. In this case, let $n_0$ and $n_1$ be the respective corresponding elements of $\pi_1(SO(2))\cong \mathbb{Z}$. Then we define the \textbf{coframing} of $A$, denoted $\text{cofr}(A)$, to be
\[
    \text{cofr}(A) \defeq n_0-n_1.
\]
If the tangents to $A(r)$ from the fourth assumption in Definition \ref{def:standardHcurve} have opposite directions, $\text{cofr}(A)$ is not defined. If $\text{cofr}(A)$ is defined, the \textbf{biframing} of $A$ is defined to be the ordered pair $(\text{fr}(A),\text{cofr}(A))$.
\end{definition}

\begin{remark}
Note that any biframed $H$-curve $A$ whose auxiliary ribbons are trivial and unframed $(1,1)$-tangles can easily be brought into standard form. Note also that the framing and coframing of a standard form of $A$ are independent of the chosen standard form. Indeed, this holds for the framing because the relative twisting between $A(a_0)$ and $A(a_1)$ when transforming an $H$-curve into standard form is clearly independent of the chosen standard form. The statement for coframing follows from reasoning analogous to the proof of Lemma \ref{lm:cf_welldef}. This observation allows us to define the framing of \textit{any} biframed $H$-curve $A$ whose auxiliary ribbons are trivial and unframed, namely as that of any standard biframed $H$-curve equivalent to $A$. We similarly define the coframing of $A$ to be that of a standard biframed $H$-curve equivalent to $A$, if the coframing of the latter is defined.
\end{remark}

Finally, we define the objects of interest to us in the context of biframed planar knotoids:

\begin{definition}
A biframed $H$-curve $A$ is said to be \textbf{simple} if both of the framed $(1,1)$-tangles defined by its auxiliary ribbons are trivial and unframed, and if the coframing of $A$ is defined.
\end{definition}

In particular the biframed $H$-curve depicted in Figure \ref{fig:Hbar} is simple, and has biframing $(-1,-1)$.

\begin{theorem}
Simple biframed $H$-curves form the geometric realization of biframed planar knotoids.
\end{theorem}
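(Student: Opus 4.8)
The plan is to combine the geometric realization of (unframed) planar knotoids from Proposition \ref{prop:planar_realization} with the bijection of Lemma \ref{lm:diagrambijection}. Writing $\operatorname{core}(A)$ for the simple $H$-curve obtained from a simple biframed $H$-curve $A$ by collapsing each of its three ribbons onto its core arc, I would show that
\[
    A \longmapsto \bigl(\operatorname{core}(A),\,\text{fr}(A),\,\text{cofr}(A)\bigr)
\]
is constant on equivalence classes and induces a bijection
\[
    \{\text{simple biframed }H\text{-curves}\}/\!\sim\ \ \longleftrightarrow\ \ \bigl(\{\text{simple }H\text{-curves}\}/\!\sim\bigr)\times\mathbb{Z}^2 .
\]
Applying Proposition \ref{prop:planar_realization} to the first factor and then Lemma \ref{lm:diagrambijection} identifies the right-hand side with the set of biframed planar knotoids, so once I also check that the composite sends $\text{fr}(A)$ and $\text{cofr}(A)$ to the framing and coframing of the corresponding biframed planar knotoid, the theorem follows.

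Well-definedness is immediate: a label-preserving ambient isotopy of $\mathbb{R}^2\times[-1,1]$ through biframed $H$-curves restricts to an equivalence of core arcs and preserves the (trivial, unframed) tangle type of the auxiliary ribbons, so $\operatorname{core}(A)$ is a well-defined simple $H$-curve up to equivalence, while $\text{fr}$ and $\text{cofr}$ are invariants by the remark following their definition. For surjectivity I would argue constructively: given a simple $H$-curve $\phi$ and $(m,n)\in\mathbb{Z}^2$, first isotope $\phi$ so that its auxiliary lines are vertical segments $\{p\}\times[-1,1]$ and $\{q\}\times[-1,1]$ with its core approaching them from within $\mathbb{R}^2\times\{0\}$; thicken the three edges into ribbons, flatly over the auxiliary lines, to obtain a standard biframed $H$-curve; then insert curls into the horizontal ribbon until its framing is $m$ and wind one end of the horizontal ribbon around an auxiliary ribbon (an ``orbiting'' operation) until the coframing is $n$. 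Since the auxiliary ribbons are left as product ribbons throughout, the result is a simple biframed $H$-curve with core $\phi$ and biframing $(m,n)$.

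The substantive step — and the one I expect to be the main obstacle — is injectivity: if two simple biframed $H$-curves $A,A'$ have equivalent cores and equal biframings, they are equivalent. I would first bring $A$ and $A'$ into standard form, concentrating all the topology in the horizontal ribbon and reducing the auxiliary ribbons to the literal product ribbons over $p$ and $q$. An ambient isotopy realizing $\operatorname{core}(A)\sim\operatorname{core}(A')$ can be taken to fix the vertical auxiliary segments and to keep the distinguished boundary arcs of Definition \ref{def:standardHcurve} horizontal throughout (as in the proof of Proposition \ref{prop:planar_realization}); it therefore carries a ribbon neighbourhood of $\operatorname{core}(A)$ to one of $\operatorname{core}(A')$. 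A ribbon neighbourhood of an embedded arc, rel its core and with the prescribed product behaviour at the ends, is determined up to isotopy by the framing class in $\pi_1(SO(2))\cong\mathbb{Z}$ of the horizontal ribbon together with the relative winding of its two ends about one another — exactly the data recorded by $\text{fr}(A)$ and $\text{cofr}(A)$ — so equality of these integers forces $A\sim A'$. Pinning down the winding bookkeeping in the last sentence is the delicate part, but it proceeds exactly as in the ``coframing exchange'' argument of Lemma \ref{lm:cf_welldef} and as in the proof of the spherical analogue in \cite{moltmaker}, with the planar input supplied by \cite{gugumcu2017new}.

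Finally, I would check compatibility of the framings under the composite map. Taking a standard biframed $H$-curve $A$ with core $\phi$ and projecting the core arc to $\mathbb{R}^2\times\{0\}$ as in Proposition \ref{prop:planar_realization} yields a knotoid diagram $\kappa$ representing the planar knotoid associated to $\phi$; after isotoping the horizontal ribbon to lie near the projection plane in blackboard-framed position, its framing equals the writhe of $\kappa$, so $\text{fr}(A)=\text{fr}(\kappa)$, and the loops in $SO(2)$ used to define $\text{cofr}(A)$ project to the loops computing the winding numbers of $\kappa$ about its leg and head, so $\text{cofr}(A)=\text{cofr}(\kappa)$. Hence the composite bijection is precisely the correspondence asserted by the theorem.
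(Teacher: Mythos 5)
Your proposal follows essentially the same route as the paper: you define the same map $A\mapsto(A^\circ,\text{fr}(A),\text{cofr}(A))$, argue it is a bijection by extending an equivalence of cores up to framing twists and coframing loops around the auxiliary ribbons (deferring, as the paper does, to the theta-curve argument of \cite{moltmaker} and to orbiting-move/coframing-exchange reasoning), and then compose with Proposition \ref{prop:planar_realization} and Lemma \ref{lm:diagrambijection}. The extra details you supply (explicit surjectivity construction, standard forms, compatibility of the two biframings) are sound elaborations of the same argument, so the proposal is correct.
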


\begin{proof}
Let $A$ be a simple biframed $H$-curve. We can contract the ribbons of $A(\overline{H})$ to obtain a canonical simple $H$-curve $A^\circ$ from $A$. We consider the following map:
\begin{align*}
    \psi: \{\text{Biframed simple $H$-curves}\} &\to \{\text{Simple $H$-curves}\}\times \mathbb{Z}^2\\
    A &\mapsto \left(A^\circ,\text{fr}(A),\text{cofr}(A)\right).
\end{align*}
We claim $\psi$ is a bijection. The proof of this claim is identical to that for the analogous statement for simple biframed theta-curves, see \cite{moltmaker}. In short, surjectivity is clear, and for injectivity an equivalence $A^\circ\simeq B^\circ$ easily extends to an equivalence $A\simeq B$ up to framing twists and coframing loops around the auxiliary ribbons, which must cancel if $\text{fr}(A)=\text{fr}(B)$ and $\text{cofr}(A)=\text{cofr}(B)$. This is done e.g.~by orbiting moves on the auxiliary ribbons analogous to Figure \ref{fig:orbiting_move}, but seen as ambient isotopies of $\mathbb{R}^2\times[-1,1]$. Combining the bijection $\psi$ with the bijections from Proposition \ref{prop:planar_realization} and Lemma \ref{lm:diagrambijection} we obtain a bijection between biframed simple $H$-curves and biframed planar knotoids, as required.
\end{proof}

\section{Knot Measures}\label{sec:measures}

In this section we describe how knotoids and invariants of knotoids can be used to quantify how much a smooth open curve in $\mathbb{R}^3$ is knotted.\\

The primary application of knotoids to quantifying the knottedness of open curves lies in the observation that projecting an open curve onto a plane typically results in a knotoid diagram. More precisely:

\begin{definition}\label{def:projection}
Let $C$ be a smooth open curve in $\mathbb{R}^3$. Then for a direction specified by a vector $\vec{v}$ on the unit sphere $S^2\subseteq\mathbb{R}^3$ we define $C_{\vec{v}}$ to be the diagram given by projecting $C$ onto a plane with normal vector $\vec{v}$ and recording crossing information.
\end{definition}

The procedure of Definition \ref{def:projection} produces a knotoid diagram unless the projection specified by $\vec{v}$ happens to project an endpoint of the open curve exactly onto one of its strands, or creates cusps or triple points in the diagram. Clearly such problematic projection directions form a measure zero subset of $S^2$ with respect to the standard measure on the unit sphere $S^2\subseteq \mathbb{R}^3$, so that we are justified in saying that projecting an open curve `almost always' results in a knotoid diagram.

The observation motivating Definition \ref{def:projection} also work to produce knot diagrams from closed smooth curves, but by Reidemeister's theorem all knot diagrams $C_{\vec{v}}$ obtained in this way are equivalent. This is not the case for knotoids: an example of one open curve projecting to two different spherical knotoids is depicted in Figure \ref{fig:projection}. This issue stems from the fact that finding knottedness in open curves is an ill-posed problem, since all smooth open curves in $\mathbb{R}^3$ are ambient isotopic.

\begin{figure}[ht]
    \centering
    \includegraphics[width=.4\linewidth]{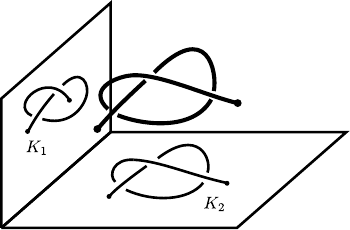}
    \caption{An open curve projected into different directions, yielding different knotoids $K_1$ and $K_2$. The knotoid $K_1$ is equivalent to the simplest nontrivial spherical knotoid $2_1$, while $K_2$ is equivalent to the trivial knotoid.}
    \label{fig:projection}
\end{figure}

knottedness of open curves can therefore not be quantified by an isotopy invariant, as one does for closed curves. Several methods to circumvent this problem have been proposed. The most basic is to reduce to the situation of closed curves by taking a `statistical closure' or finding the `dominant knot' in an open curve. This approach is discussed in \cite{sulkowska2012conservation,jamroz2015knotprot,gabrovsek2021invariant} for example.

\begin{definition}
Let $C$ be a smooth open curve in $\mathbb{R}^3$. Place $C$ inside a very large sphere $S^2$. For every point $\vec{v}\in S^2$ let $C_{\vec{v}}$ denote the knot obtained by adding straight lines from the end-points of $C$ to $\vec{v}$, if this indeed defines a knot. (Like Definition \ref{def:projection}, this defines a knot for almost all $\vec{v}\in S^2$.) We define the \textbf{dominant knot} of $C$ to be the knot that has the highest probability of being obtained in this fashion when a point $\vec{v}\in S^2$ is selected with uniform probability.
\end{definition}

The approach of finding a dominant knot is reasonable if our open curve resembles a long knot, but may fail to accurately model the topology of a generic open curve whose endpoints may lie in the middle of a highly tangled region of the curve. Such open curves do not have a canonical closure, and it may not be ideal to model them using a knot. A clear solution to this last objection is instead to work with a `dominant knotoid':

\begin{definition}\label{def:dom_knotoid}
For a smooth open curve $C$ in $\mathbb{R}^3$ and a vector $\vec{v}$ on the unit sphere, let $C_{\vec{v}}$ be as in Definition \ref{def:projection}. Then the \textbf{dominant knotoid} of $C$ is the knotoid that is most likely to be equivalent to $C_{\vec{v}}$ when $\vec{v}$ is selected randomly from $S^2$ with uniform probability.
\end{definition}

While the dominant knotoid may of an open curve $C$ may better model the topology of $C$ than its open knot, picking only one of the knotoids arising from an open curve is still somewhat ad hoc. A more uniform approach is taken in \cite{goundaroulis2017topological,dorier2018knoto} where $S^2$ is coloured according to the knotoid type of $C_{\vec{v}}$ for all $\vec{v}\in S^2$. While this method is certainly comprehensive in encoding the knottedness of $C$, it is extremely difficult to implement. Indeed, for certain points $\vec{v}\in S^2$ the knotoid diagram $C_{\vec{v}}$ can have a particularly large amount of crossings, making the knotoid type of $C_{\vec{v}}$ difficult to classify. More-over the classification of knotoids is currently limited: the classification of prime planar knotoids with up to 5 crossings is not yet complete. As a result this method, while very comprehensive, is too difficult to implement for most applications.\\

A rather different approach is that taken in \cite{panagiotou2020,panagiotou2021}, where the knottedness of an open curve $C$ is quantified by a function that depends continuously on the coordinates of $C$. This is the approach that we further investigate here since it not particularly ad hoc and will generally result in measures of knottedness that are computable (by virtue of being continuous and hence susceptible to approximation). 

In order to generalize the approach taken in \cite{panagiotou2020,panagiotou2021} we define `knot measures' of open curves. These are meant to be the best thing one can hope for after noting that ambient isotopy invariants cannot detect any information of open curves.

\begin{definition}\label{def:knot_measures}
A \textbf{knot measure} is a function $\varphi$ of open curves $C$ taking values in some real vector space, such that:
\begin{enumerate}
    \item $\varphi$ is continuous with respect to the topology induced by the Hausdorff metric on the space of images of open curves.
    \item There exists a knot invariant $\overline{\varphi}$ that $\varphi$ extends to in the following sense: As we bring the ends of an open curve $C$ together to form of knot $K$, the value of $\varphi(C)$ must converge to $\overline{\varphi}(K)$. As a short-hand:
    \[
        \lim_{C\to K} \varphi(C) = \overline{\varphi}(K).
    \]
\end{enumerate}
\end{definition}

In the context of knot measures, the applicability of spherical knotoids to quantifying knottedness of open curves is now expressed by the following theorem:

\begin{theorem}\label{thm:knot_measures}
Every spherical knotoid invariant $\varphi$ gives rise to a knot measure by defining
\[
        \varphi(C) \defeq \frac{1}{4\pi} \int_{\vec{v}\in S^2-X} \varphi(C_{\vec{v}})\, dS,
\]
on open curves $C$. Here $X$ is the measure zero subset of $S^2$ for which $C_{\vec{v}}$ is not a valid knotoid diagram.
\end{theorem}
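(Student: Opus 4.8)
The plan is to verify the two defining properties of a knot measure from Definition \ref{def:knot_measures} separately: continuity with respect to the Hausdorff metric, and the limiting behaviour as $C\to K$. The value $\varphi(C_{\vec v})$ being an integer (or more generally living in a fixed vector space and taking finitely many values on projections of a fixed curve) means the integrand is a bounded, measurable, locally constant function on $S^2 - X$, so the integral makes sense; this should be remarked on first.

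For continuity, the key observation is that $\varphi(C_{\vec v})$ is a locally constant function of $\vec v$ away from a codimension-one ``wall'' set $W\subseteq S^2$: the knotoid type of the projection only changes when $\vec v$ crosses a direction at which $C_{\vec v}$ acquires a triple point, a cusp, or an endpoint-on-strand coincidence. The measure of this wall set is zero, and moreover it varies continuously (in fact, at least $C^0$) with small perturbations of the curve $C$ in the Hausdorff metric. So I would argue: given a sequence $C_n \to C$ in the Hausdorff metric (with the curves suitably parametrised so that projections make sense), for a fixed $\vec v \notin W(C)$ the projections $(C_n)_{\vec v}$ are equivalent to $C_{\vec v}$ for $n$ large, hence $\varphi((C_n)_{\vec v}) \to \varphi(C_{\vec v})$ pointwise almost everywhere on $S^2$; then apply the dominated convergence theorem, using the uniform bound on $|\varphi(C_{\vec v})|$ coming from the fact that the number of crossings of $C_{\vec v}$ is uniformly bounded over $\vec v$ for a fixed smooth curve (and stays bounded under small perturbations), so only finitely many knotoid types occur. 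This yields $\varphi(C_n) \to \varphi(C)$.

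For the second property, I would take a family of open curves $C$ degenerating to a knot $K$ as the two endpoints are brought together along a short arc. The point is that for almost every $\vec v$, once the endpoints are close enough the knotoid diagram $C_{\vec v}$ is the ``underpass/overpass closure'' of the knot diagram $K_{\vec v}$ near the point where the ends meet — concretely, $C_{\vec v}$ is a knotoid diagram whose underlying knot (obtained by connecting the leg and head by a short arc in the plane) is exactly $K_{\vec v}$, and in the limit the knotoid becomes the ``planar closure'' situation. More carefully: spherical knotoids have a well-defined underlying knot via either the overpass or underpass closure, and as $C \to K$ every projection $C_{\vec v}$ (for $\vec v$ outside a measure-zero set) stabilises to a knotoid whose closure is $K_{\vec v} \sim K$. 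Hence if we \emph{define} $\overline\varphi(K)$ to be the average over $S^2$ of $\varphi$ evaluated on the knotoid-closure of $K_{\vec v}$ — which is a knot invariant because $K_{\vec v}$ is independent of $\vec v$ up to equivalence by Reidemeister's theorem, so this average is just $\varphi$ of a single fixed knotoid, namely the trivial-endpoint insertion of $K$ — then $\varphi(C) \to \overline\varphi(K)$ by dominated convergence again (pointwise convergence of integrands a.e., uniform bound on crossing number near the limit). One should check that $\overline\varphi$ so defined is genuinely a knot invariant: this is immediate since it is literally $\varphi$ applied to a canonically-associated spherical knotoid of $K$, and equivalent knot diagrams give equivalent such knotoids.

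The main obstacle I expect is making the continuity argument fully rigorous at the level of the wall set $W$ and the dominated-convergence bound: one needs that for a smooth open curve the ``bad'' directions (cusps, triple points, endpoint-strand collisions) form a genuinely measure-zero set that behaves well under $C^0$-small perturbations, and that the number of crossings of $C_{\vec v}$ is bounded uniformly in $\vec v$ — the latter requires some care (e.g.\ a bound in terms of the total curvature or the geometry of $C$), since a priori the Hausdorff metric does not control derivatives and hence does not control crossing numbers. Likely the cleanest fix is to restrict attention to curves of bounded total curvature, or to note that the statement is really about the restriction of $\varphi$ to the relevant space of tame curves; I would flag this and proceed with the dominated convergence argument, since the essential content — pointwise a.e.\ stability of the knotoid type plus an $L^\infty$ bound — is robust.
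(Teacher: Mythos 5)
Your treatment of the second property is essentially the paper's argument: you define $\overline{\varphi}(K)=\varphi(K^\bullet)$ for a canonically associated spherical knotoid $K^\bullet$ and note that $\mathbb{P}(C_{\vec v}=\kappa^\bullet)\to 1$ as the endpoints come together. The only thing you gloss over is the well-definedness of $K^\bullet$ itself: the paper explicitly proves that the knotoid obtained by deleting a small segment does not depend on \emph{which} arc of the diagram the segment is removed from (by dragging the obstructing strand around the back of $S^2$), and this is precisely the point where sphericity of $\varphi$ is used -- the analogous statement fails for planar knotoids, as the paper's remark after the theorem stresses. Calling this ``immediate'' skips the one nontrivial step of that half.

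The genuine gap is in your continuity argument, and it is more serious than the caveat you flag. Your key claims -- that for fixed $\vec v\notin W(C)$ one has $(C_n)_{\vec v}\simeq C_{\vec v}$ for large $n$, and that crossing numbers stay bounded under small perturbations -- are simply not available when $C_n\to C$ only in the Hausdorff metric: a curve $C_n$ obtained from $C$ by tying a small knot inside an $\varepsilon_n$-ball converges to $C$ in Hausdorff distance while changing the knotoid type of $(C_n)_{\vec v}$ for almost every $\vec v$ and making the crossing number arbitrarily large, so both the pointwise a.e.\ convergence and the $L^\infty$ domination needed for your dominated-convergence step fail; and your proposed repair (restricting to curves of bounded total curvature) proves a different statement. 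The paper's proof avoids this entirely and uses a different analytic mechanism: first restrict to polygonal chains with a fixed number $n$ of edges, where only finitely many knotoid types $K_1,\dots,K_m$ occur, write $\varphi(E_n)=\sum_i p_i\varphi(K_i)$, and invoke the uniform continuity of the probabilities $p_i$ as functions of the vertex coordinates (Lemma~3.1 of Panagiotou--Kauffman); then pass to arbitrary smooth curves by density of polygonal chains, after embedding the values in a copy of $\ell^2$ (using $\|\varphi(C)\|_2\le 1$) so that the codomain is complete and the uniformly continuous function extends continuously. If you want to salvage your route you would need to replace Hausdorff convergence by a convergence strong enough to control projections (e.g.\ $C^1$), which is no longer the statement of the theorem; otherwise the polygonal-approximation argument, or the cited lemma it rests on, is the missing ingredient.
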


\begin{proof}
We check both defining properties of a knot measure, in order:

\begin{enumerate}
    \item This part of the proof is adapted from \cite{panagiotou2020}, where the case in which $\varphi$ is the Kauffman bracket is discussed. First we restrict to the case where $C=E_n$ lies in the class of polygonal chains consisting of $n$ edges. In this case $(E_n)_{\vec{v}}$ is one of finitely many knotoids. Let us list these knotoids as $\{K_1,\dots,K_m\}$. Then
\[
    \varphi(E_n) = \frac{1}{4\pi} \int_{\vec{v}\in S^2-X} \varphi\left((E_n)_{\vec{v}}\right)\, dS
    = \sum_{i=1}^m p_i \varphi(K_i),
\]
where $p_i=\mathbb{P}\left((E_n)_{\vec{v}}=K_i\right)$ when $\vec{v}$ is randomly sampled from $S^2-X$ with a uniform probability distribution. The proof of \cite[Lemma.~3.1]{panagiotou2020} shows that each $p_i$ is a uniformly continuous function of the coordinates of $E_n$.

Let $V$ be the free vector space over the set $\{\varphi(K_i)\}_{i=1}^m$. Then clearly $V$ is finite-dimensional, namely its dimension is bounded by $m$. Consider the standard Euclidean norm $\|\cdot\|$ on $V$. With respect to this norm we have
\[
    \|\varphi(E_n)\| = \sqrt{ \sum_{i=1}^m p_i{}^2 }.
\]
Thus since each $p_i$ is a uniformly continuous function of the coordinates of $E_n$ we conclude that $\varphi(E_n)$ is too. So the result we wish to prove holds when restricted to the class of polygonal chains with $n$ edges, for any $n$.

Let $\mathcal{P}$ be the set of all polygonal open chains. If $C$ is any smooth open curve then we can make a sequence of polygonal approximations $\{C_n\}_{n\in\mathbb{N}}$ of $C$ such that for all $\varepsilon>0$ there exists an $N\in \mathbb{N}$ so that $C_n$ is contained in a tube of radius $\varepsilon$ around $C$ for all $n>N$. Thus $\mathcal{P}$ is dense in the space of all open curves. Let $W$ be the vector space over $\mathbb{R}$ spanned by $\{\varphi(K)\}_{K\in\mathcal{K}}$, where $\mathcal{K}$ is the set of all spherical knotoids. Let $B$ be a (Hamel) basis for $W$ contained in $\{\varphi(K)\}_{K\in\mathcal{K}}$. The space $W$ may be infinite-dimensional, but since $\mathcal{K}$ is countably infinite, $B$ is finite or countably infinite. Consider the $l^2$ norm $\|\cdot\|_2$ on $W$ with respect to $B$. If $W$ is infinite-dimensional then $W$ is not complete with respect to this norm. However, by construction we have $\|\varphi(C)\|_2\leq 1$ for all $C$, so that the image of $\varphi$ lies inside the copy of $l^2$ contained in $W$. Hence we can take the codomain of $\varphi$ to be complete. By uniform continuity of $\varphi$ on $\mathcal{P}$, density of $\mathcal{P}$, and completeness of the codomain of $\varphi$, we conclude that $\varphi$ is continuous at $C$ (see, for example, \cite[Ch.~24]{erdman2005problemtext}).
    
    \item This part of the proof follows from canonically associating a spherical knotoid $K^\bullet$ to any knot $K$ \cite{turaev2012knotoids}. We then define $\overline{\varphi}$ by
    \[
        \overline{\varphi}(K) \defeq \varphi(K^\bullet).
    \]
    The knotoid $K^\bullet$ is defined by taking a diagram of $K$ and removing a small segment from one of its arcs, away from the crossings of $K$. To see that this is well-defined we must show that $K^\bullet$ is independent of the chosen arc. The proof of this is depicted in Figure \ref{fig:K_bullet}: The removed arcs can always be moved over or under one crossing by dragging the obstructing strand of the crossing along the back of $S^2$ as shown. Note that the proof of this fact also goes through for framed knotoids, since the loops undone by by the first Reidemeister move in the last equality of Figure \ref{fig:K_bullet} are oppositely oriented.
    
    \begin{figure}[ht]
        \centering
        \includegraphics[width=.8\linewidth]{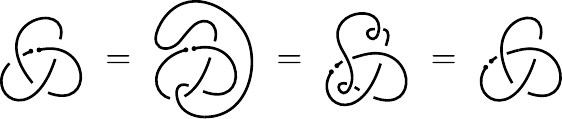}
        \caption{Showing well-definedness of $K^\bullet$.}
        \label{fig:K_bullet}
    \end{figure}
    
    Clearly $\overline{\varphi}$ is now a knot invariant, since equivalent knots $K$ and $L$ give rise to equivalent knotoids $K^\bullet$ and $L^\bullet$.
    
    Finally, we prove $\overline{\varphi}$ extends $\varphi$. Say that we bring the ends of $C$ together to form a closed curve $K$. By Reidemeister's theorem all diagrams $K_{\vec{v}}$ are equivalent to some knot diagram $\kappa$. Clearly as the ends of $C$ are brought together, $\mathbb{P}(C_{\vec{v}}=\kappa^\bullet)$ converges to 1. Hence
    \[
        \lim_{C\to K} \varphi(C) = \frac{1}{4\pi} \int_{\vec{v}\in S^2-X} \varphi\left(\kappa^\bullet \right)\, dS = \frac{1}{4\pi} 4\pi \varphi(\kappa^\bullet) = \varphi(\kappa^\bullet) = \overline{\varphi}(K),
    \]
    as required.
\end{enumerate}
\end{proof}

\begin{remark}
The requirement that $\varphi$ is a \textit{spherical} knotoid invariant is necessary. Namely $K^\bullet$ is not well-defined as a planar knotoid and hence planar invariants $\varphi$ do not extend to a well-defined knot invariant $\overline{\varphi}$. The fact that $K^\bullet$ is not well-defined as a planar knotoid is exemplified by the two knotoids at the far left- and right-hand sides of Figure \ref{fig:K_bullet}: these are known to be inequivalent as planar knotoids, tabulated in \cite{goundaroulis2019systematic} as $3_{16}$ and $3_1$ respectively.

However, for a planar knotoid invariant $\varphi$ the first part of the proof of theorem \ref{thm:knot_measures} still goes through. So in this case we still obtain a continuous measure of knottedness; just one that doesn't extend to an isotopy invariant on closed curves. 

Noting that there is a surjection from planar knotoids to spherical knotoids (induced by the one-point compactification of $\mathbb{R}^2$), we conclude knot measures associated to planar knotoid invariants can generally contain more information about the shape of a knotted open curve than those coming from spherical invariants. However, we see that this information does not have the character of an ambient isotopy invariant, and is therefore of a more geometric nature.
\end{remark}

Note that the approach taken in \cite{goundaroulis2017topological,dorier2018knoto} where $S^2$ is colored according to the spherical knotoid type of $C_{\vec{v}}$ determines the knot measures constructed in Theorem \ref{thm:knot_measures}. Indeed, after integration over $S^2$ this approach yields the case of Theorem \ref{thm:knot_measures} when $\varphi$ is the trivially complete knotoid invariant given by 
\[
    \varphi(K)=\text{[Knotoid type of $K$]}.
\]

\section{Universal Quantum Invariants of Planar Knotoids}\label{sec:invariants}

In this section we define the universal quantum invariant of biframed knotoids associated to any ribbon Hopf algebra $A$. We show how all of the quantum invariants of planar knotoids discussed in \cite{moltmaker} can be recovered from these universal invariants. Afterwards we follow the approach to universal invariants seen in \cite{bar2021} by working with the algebra $\mathbb{D}$, which is a ribbon Hopf algebra over $\mathbb{Q}[\epsilon]\llbracket \hbar\rrbracket$ related to quantum $U(\mathfrak{sl}_2)$. In \cite{bar2021} a Mathematica \cite{Wolf} implementation is given for efficiently computing the universal invariant associated to $\mathbb{D}$ up to fixed order in $\epsilon$. We end this section by applying this Mathematica implementation to biframed knotoid diagrams, in order to carry out example computations for universal quantum invariants.

\subsection{Universal Quantum Invariants}\label{subsec:universal_invariants}


In this subsection we follow the notation from \cite{ohtsuki2002quantum}, as all the results for knots that we will generalize to knotoids in this subsection can be found there. Throughout, we assume without loss of generality that the tangent vectors at the endpoints of all biframed planar knotoids are directed vertically downwards, and we let $A$ be a ribbon Hopf algebra. That is, $A$ has morphisms $(m,\Delta,i,\epsilon,S)$ endowing it with the structure of a Hopf algebra, has a quasitriangular structure $\mathcal{R}\in A\otimes A$, and has a central `ribbon element' $v$ such that
\begin{align*}
    & v^2 = S(u)\cdot u,\\
    & \Delta(v) = (v\otimes v)\cdot (\mathcal{R}_{21}\mathcal{R})^{-1},\\
    & S(v)=v,\\
    & \epsilon(v)=1.
\end{align*}
where, writing $\mathcal{R}=\sum_i \alpha_i\otimes \beta_i$, we denote $\mathcal{R}_{21}=\sum_i \beta_i\otimes \alpha_i$ and define $u\in A$ to be given by $u=\sum_i S(\beta_i)\cdot \alpha_i$. Similarly, we will write $\mathcal{R}^{-1}\in A\otimes A$ as $\mathcal{R}^{-1} = \sum_i \alpha_i'\otimes \beta_i'$.

Given a ribbon Hopf algebra $A$ we will construct an invariant $Q^{A;\star}$ of oriented biframed planar knotoids, taking values in $A$, called the `universal quantum invariant associated to $A$'. We will describe the construction of this invariant in terms of elementary knotoid diagram pieces.

\begin{definition}
The \textbf{elementary knotoid diagram pieces} are the portions of knotoid diagram depicted in Figure \ref{fig:elementary_knotoid_pieces}, with either orientation allowed for the unoriented pieces. 

\begin{figure}[ht]
    \centering
    \includegraphics[width=\linewidth]{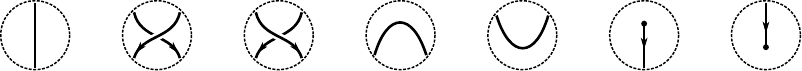}
    \caption{The elementary knotoid diagram pieces.}
    \label{fig:elementary_knotoid_pieces}
\end{figure}

Let $D_1$ and $D_2$ be portions of knotoid diagram all of whose open ends (excluding any knotoid end-points) are either at the top or bottom of the diagram, i.e.~let $D_1$ and $D_2$ be tangles that may be decorated with the head and/or leg of a knotoid. We define their \textbf{tensor product} $D_1\otimes D_2$ to be their horizontal juxtaposition. If $D_1$ has $n$ open ends at the bottom and $D_2$ has $n$ open ends at its top, then we define their \textbf{composition} $D_1\circ D_2$ to be the diagram portion obtained by placing $D_1$ above $D_2$ and gluing the open ends at the bottom of $D_1$ to those at the top of $D_2$, in order\footnote{This convention for the notation of composition essentially means that we read a composition of diagram portions `from bottom to top'.}. Note that in this way and biframed knotoid diagram can be decomposed into a finite sequence of compositions and tensor products of elementary knotoid diagram pieces (again, assuming without loss of generality that the tangent vectors at its endpoints are directed vertically downwards).
\end{definition}

\begin{definition}
Let $K$ be an oriented biframed knotoid diagram, and $A$ a ribbon Hopf algebra. We define the \textbf{universal quantum invariant $Q^{A;\star}$ associated to $A$} by decomposing $K$ into compositions and tensor products of elementary knotoid diagram pieces. We evaluate $Q^{A;\star}$ on these pieces via Figure \ref{fig:universal_invariant}. We then glue the values of $Q^{A;\star}$ on these elementary pieces back together according to $K$. To find $Q^{A;\star}(K)$ we then run through the resulting diagram from leg to head, multiplying the elements of $A$ placed on $K$ together as we encounter them, and finally sum over all the indices of copies of $\mathcal{R}$ and $\mathcal{R}^{-1}$. The result is an element of $A$ which we define to be $Q^{A;\star}(K)$.

\begin{figure}[ht]
    \centering
    \includegraphics[width=.9\linewidth]{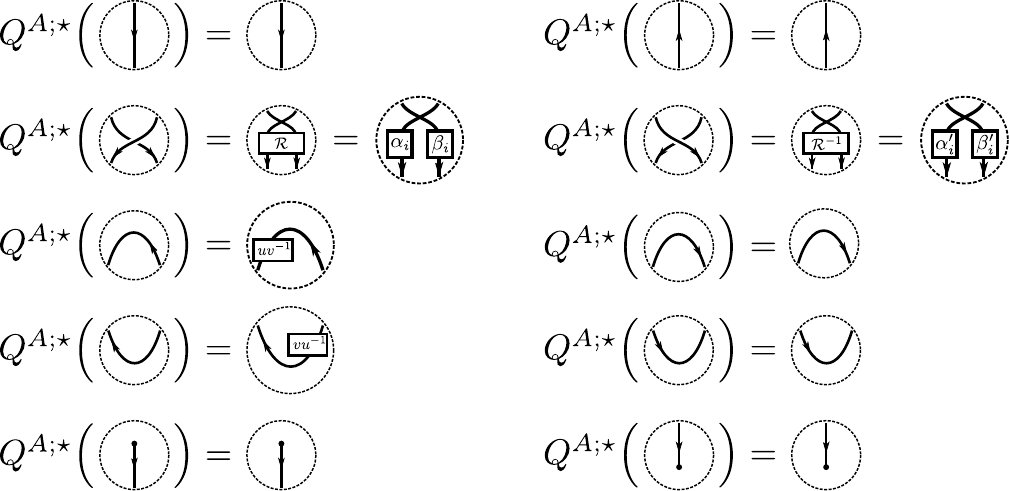}
    \caption{The value of $Q^{A;\star}$ on elementary knotoid diagram pieces.}
    \label{fig:universal_invariant}
\end{figure}

\end{definition}

\begin{example}\label{ex:universal_quantum_example}

The definition of $Q^{A;\star}$ is illustrated for an example knotoid in Figure \ref{fig:universal_quantum_example}.

\begin{figure}[ht]
    \centering
    \includegraphics[width=.85\linewidth]{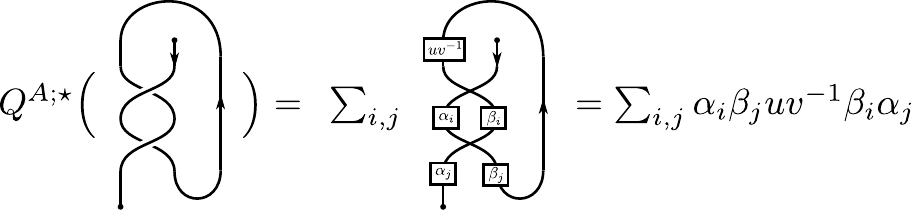}
    \caption{Computing $Q^{A;\star}(K)$ for an example knotoid $K$.}
    \label{fig:universal_quantum_example}
\end{figure}

\end{example}

\begin{remark}\label{rk:codomain}
In the case of knots, the universal quantum invariant takes values in $A/I$ where $I$ is the vector subspace spanned by elements of the form $xy-yx$ for $x,y\in A$. Namely to compute the universal quantum invariant of knots, one chooses a place to start running through the knot diagram. To ensure the resulting element of $A$ is independent of the chosen starting point, one must therefore quotient by $I$. This is not the case for knotoids since they, like tangles, have a canonical starting point, namely the leg. If we do consider $Q^{A;\star}(K)$ as an element of $A/I$ for a knotoid $K$, then the result is easily seen to be equal to the universal quantum invariant of the virtual closure of $K$, seen as a rotational virtual knot\footnote{Here the virtual closure is defined by taking a diagram for $K$ such that the direction from its head to its leg is equal to its tangent direction at the end-points, and adding the straight line from head to leg to $K$ interpreting every crossing of this line with $K$ as virtual.}. See \cite{kauffman2015rotational} for details on universal quantum invariants of rotational virtual knots. 
\end{remark}

\begin{lemma}\label{lm:universal_invariance}
The universal quantum invariant $Q^{A;\star}$ is an invariant of oriented biframed planar knotoids. 
\end{lemma}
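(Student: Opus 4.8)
The plan is to check that $Q^{A;\star}$ is unchanged by every move generating the equivalence relation on oriented biframed planar knotoid diagrams. Using the fixed-endpoint description from the remark following Lemma~\ref{lm:cf_welldef}, this relation is generated by: (i) changing the decomposition of a given diagram into elementary pieces, equivalently isotopies keeping the diagram generic with respect to the height function together with the moves relating two such generic forms; (ii) the framed Reidemeister moves $R1'$, $R2$, $R3$, each supported in a disk disjoint from the leg and head; and (iii) the coframing identities of Figure~\ref{fig:coframing_identities}, supported near the leg and head, which replace the orbiting isotopy. (Since we work with \emph{planar} rather than spherical knotoids, there is no additional ``spherical'' move to consider.) Step (i) is exactly the assertion that $Q^{A;\star}$ is a well-defined function of a diagram at all, so it must be settled first.

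For (i): away from the leg and head a biframed knotoid diagram is literally a framed $(1,1)$-tangle diagram, and on the crossing, cup and cap pieces the assignment in Figure~\ref{fig:universal_invariant} coincides with the one used for framed tangles in \cite{ohtsuki2002quantum}. Hence independence of the decomposition follows from the functoriality of the universal invariant of framed tangles: the relations to verify are the zig-zag (rigidity) identities, which hold by the way the cup and cap pieces are decorated in Figure~\ref{fig:universal_invariant} (this is where the invertible element built from $u$ and $v$ enters), the interchange law between $\otimes$ and $\circ$, which is automatic, and the relations allowing distant pieces and crossings to slide past one another, which follow from the quasitriangularity and ribbon axioms. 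The only genuinely new elementary pieces are the leg and the head; since they carry the trivial element and merely mark where the word read off from the diagram begins and ends, they impose no new relations, and in particular an isotopy fixing neighbourhoods of the endpoints leaves $Q^{A;\star}$ unchanged.

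For (ii): each of $R1'$, $R2$, $R3$ replaces one framed-tangle fragment by another inside a disk disjoint from the endpoints, so the two sides differ only in that fragment's contribution. Invariance under $R2$ reduces, after the evident identifications, to $\mathcal{R}\mathcal{R}^{-1}=1=\mathcal{R}^{-1}\mathcal{R}$ in $A\otimes A$; invariance under $R3$ reduces to the quasitriangularity relations for $\mathcal{R}$, in particular the quantum Yang--Baxter equation; and invariance under $R1'$ reduces to the fact that the two oppositely-signed curls of Figure~\ref{fig:weakreid} contribute mutually inverse central elements, a consequence of the ribbon axioms $v^2=S(u)\,u$ and $S(v)=v$. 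All three verifications coincide with those for framed tangles, and I would cite \cite{ohtsuki2002quantum} for them.

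The substantive point, and the one I expect to be the main obstacle, is invariance under the coframing identities (iii), since this is the only place where the biframed-knotoid structure genuinely differs from that of a framed tangle with two marked endpoints. I would compute $Q^{A;\star}$ on the two sides directly from Figure~\ref{fig:universal_invariant}: the modified piece consists of a loop run through a neighbourhood of an endpoint, so the comparison comes down to simplifying a short product of copies of $\mathcal{R}^{\pm1}$ together with a cup and a cap, using the counit relations $(\id\otimes\epsilon)(\mathcal{R})=(\epsilon\otimes\id)(\mathcal{R})=1$, the zig-zag identity already established in (i), and the same interaction of the endpoint decoration with the ribbon element as in the $R1'$ computation. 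The care required here is to match the handedness conventions of Figure~\ref{fig:coframing_identities} with the distribution of $\mathcal{R}$ versus $\mathcal{R}^{-1}$ and to keep track of the endpoint decorations throughout. Once (i)--(iii) are checked, $Q^{A;\star}$ descends to oriented biframed planar knotoids.
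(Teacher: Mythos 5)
Your overall strategy is legitimate and differs from the paper's in presentation: you pass to the fixed-endpoint description from the remark after Lemma \ref{lm:cf_welldef}, so the new verification becomes the coframing identities of Figure \ref{fig:coframing_identities}, whereas the paper keeps the original definition, imports everything supported away from the endpoints from the framed-knot case (essentially your steps (i) and (ii), via \cite{ohtsuki2002quantum}), and reduces the new content to the orbiting move of Figure \ref{fig:orbiting_move}, which it settles by noting that each strand picks up two cancelling rotation factors $uv^{-1}$ and $vu^{-1}$. You correctly identify that the endpoint move---orbiting in the paper's route, the coframing identities in yours---is the only genuinely new check, and your route is workable in principle (the paper's remark even advertises this presentation for exactly this purpose).

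The gap is that this crucial check is only announced, and the tools you propose for it would not carry it out. The counit relations $(\id\otimes\epsilon)(\mathcal{R})=(\epsilon\otimes\id)(\mathcal{R})=1$ play no role in $Q^{A;\star}$: by definition the invariant is a word in $A$ obtained by multiplying the elements placed on the single strand as one runs from leg to head, and no counit is ever applied; at the self-crossing created by a loop encircling an endpoint, \emph{both} tensor legs of the corresponding $\mathcal{R}^{\pm1}$ land on that one strand and enter the word, so neither can be removed by $\epsilon$. Likewise there is no ``interaction of the endpoint decoration with the ribbon element'' to reuse from the $R1'$ computation, since the leg and head carry the trivial element. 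What the verification actually needs is the rotation-element bookkeeping of Figure \ref{fig:universal_invariant}: the bends of the loop contribute $uv^{-1}$ and $vu^{-1}$, $v$ is central, $u$ implements $S^2$, and one must evaluate the resulting expressions of the form $\sum_i \alpha_i\,(uv^{-1})^{\pm1}\beta_i$ (and their $\mathcal{R}^{-1}$ analogues) to see that the factor produced at the leg matches the one produced at the head. This is precisely the computation the paper carries out for the orbiting move (where it is simpler, as only bends and no new crossings are involved); until you perform its analogue for Figure \ref{fig:coframing_identities}, the step that distinguishes biframed knotoids from framed tangles remains unproven.
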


\begin{proof}

By the proof of the analogous statement for framed knots, it suffices to show invariance under the orbiting moves (recall Figure \ref{fig:orbiting_move}). This is a trivial computation: just evaluate $Q^{A;\star}$ on all bends and note that every strand obtains exactly two canceling factors $uv^{-1}$ and $vu^{-1}$.
\end{proof}

The quantum invariant $Q^{A;\star}$ is `universal' in the sense that it dominates the quantum invariants defined from a Hopf algebra representation using a Reshetikhin-Turaev construction \cite{moltmaker}, in analogy with quantum invariants of knots. To show this we first recall the definition of Reshetikhin-Turaev invariants of planar knotoids, phrased in the notation of \cite{ohtsuki2002quantum} for consistency.

\begin{definition}
Let $K$ be an oriented biframed knotoid diagram, $A$ a ribbon Hopf algebra, and $(V,\rho)$ a finite-dimensional representation of $A$ over a field $k$. Let $R\in\text{End}(V\otimes V)$ and $h\in\text{End}(V)$ be given by
\[
    R = \tau\circ (\rho\otimes \rho)(\mathcal{R})
    \qquad\text{ and }\qquad
    h = \rho(uv^{-1}),
\]
where $\tau:x\otimes y\mapsto y\otimes x$. Note that therefore $R(x\otimes y)=\sum_i \rho(\beta_i)(y)\otimes \rho(\alpha_i)(x)$. We further define the morphisms $n:V\otimes V^*\to k$, $n':V^*\otimes V\to k$, $u:k\to V^*\otimes V$, and $u':k\to V\otimes V^*$ as follows:
\begin{align*}
    & n(x\otimes f) = f(h(x)),
    \qquad\qquad
    n'(f\otimes x) = f(x), \\
    & u(1) = \sum_i e^i\otimes h^{-1}(e_i),
    \qquad\qquad
    u'(1) = \sum_i e_i\otimes e^i,
\end{align*}
where $\{e_i\}$ is any basis of $V$ and $\{e^i\}$ is the associated dual basis of $V^*$. Finally, we let $\eta:k\to V$ and $\epsilon:V\to k$ denote the generic linear maps given by $\eta(1)=\sum_i \eta^i e_i$ and $\epsilon(\sum_i \lambda^i e_i)=\sum_i \epsilon_i\lambda^i$ for arbitrary $\{\eta^i,\epsilon_i\}\subseteq k$.

Decompose $K$ into a sequence of compositions and tensor products of elementary knotoid diagram pieces. The \textbf{Reshetikhin-Turaev invariant $Q^{A;V}$} associated to $(A,V)$ is an element of $\text{Hom}(k,k)\cong k$. It is defined by associating the morphisms defined above to elementary knotoid diagram pieces via Figure \ref{fig:RT_invariant}, and composing the morphisms associated to portions $D_1,D_2$ of knotoid diagram according to the rules $Q^{A;V}(D_1\otimes D_2)=Q^{A;V}(D_1)\otimes Q^{A;V}(D_2)$ and $Q^{A;V}(D_1\circ D_2)=Q^{A;V}(D_1)\circ Q^{A;V}(D_2)$. Note that the result is indeed always an element of $\text{Hom}(k,k)$, since a biframed planar knotoid diagram has no open ends except for its leg and head, which represent morphisms to and from $k$ respectively. See \cite{moltmaker,ohtsuki2002quantum} for more details.

\begin{figure}[ht]
    \centering
    \includegraphics[width=.75\linewidth]{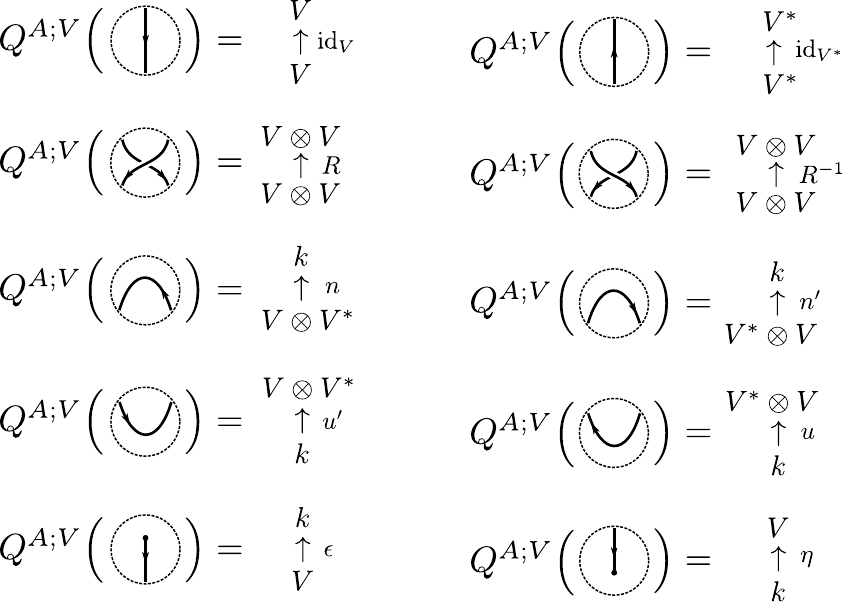}
    \caption{The value of $Q^{A;V}$ on elementary knotoid diagram pieces.}
    \label{fig:RT_invariant}
\end{figure}

\end{definition}

\begin{lemma}
The Reshetikhin-Turaev invariant $Q^{A;V}$ is an invariant of oriented biframed planar knotoids.
\end{lemma}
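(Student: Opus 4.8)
The plan is to verify invariance of $Q^{A;V}$ under each of the generating moves for oriented biframed planar knotoids: the weakened first Reidemeister move $R1'$, the second and third Reidemeister moves $R2, R3$, planar ambient isotopy (which amounts to the zig-zag/straightening identities on the elementary pieces, i.e. the fact that the cup--cap compositions $n, n', u, u'$ satisfy the snake relations), and the orbiting move from Figure \ref{fig:orbiting_move}. Since $Q^{A;V}$ is defined on a decomposition of $K$ into elementary pieces via the functorial rules $Q^{A;V}(D_1\otimes D_2)=Q^{A;V}(D_1)\otimes Q^{A;V}(D_2)$ and $Q^{A;V}(D_1\circ D_2)=Q^{A;V}(D_1)\circ Q^{A;V}(D_2)$, it suffices to check that the morphisms assigned to the two sides of each move in Figure \ref{fig:RT_invariant} agree, and that the value is independent of the chosen decomposition — the latter being the standard coherence argument for such graphical calculi.

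The cleanest route is to reduce everything to the analogous, already-known statement for framed tangles. Most of the moves ($R2$, $R3$, the snake relations, and $R1'$) take place away from the endpoints of the knotoid, so locally they are moves on a framed $(m,n)$-tangle; here the morphisms $R=\tau\circ(\rho\otimes\rho)(\mathcal{R})$, $h=\rho(uv^{-1})$, $n,n',u,u'$ are exactly the Reshetikhin–Turaev data for framed tangles, for which invariance under $R2, R3, R1'$ and the snake relations is classical (see \cite{ohtsuki2002quantum}) — it follows from the quasitriangularity axioms ($R2$), the Yang–Baxter equation ($R3$), and the properties of the ribbon element together with $h$ being grouplike-compatible with the (co)evaluations ($R1'$). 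Thus for these moves we may simply invoke the framed-tangle case applied to the relevant local tangle. The new elementary pieces involving the leg and head are $\eta:k\to V$ and $\epsilon:V\to k$ with arbitrary coefficients $\{\eta^i,\epsilon_i\}$; since no move in the list creates or destroys a crossing adjacent to an endpoint (that would be a forbidden move), these pieces are untouched by $R1', R2, R3$ and the snake relations, so nothing extra is needed there.

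The one move genuinely special to biframed knotoids is the orbiting move, and I expect this to be the main obstacle. Following the strategy of Lemma \ref{lm:universal_invariance}, I would observe that an orbiting move drags a bundle of parallel strands around an endpoint, and evaluating $Q^{A;V}$ on the resulting cups and caps produces, on each strand, a pair of factors coming from a maximum and a minimum that compose to $\rho(uv^{-1})$ and $\rho(uv^{-1})^{-1}=\rho(vu^{-1})$ respectively — equivalently, an application of $h$ followed by $h^{-1}$ — which cancel. More efficiently, one can push the abstract statement through the functor: Lemma \ref{lm:universal_invariance} already establishes that $Q^{A;\star}$ is orbiting-invariant at the level of $A$, and $Q^{A;V}$ is obtained by applying $\rho$ (suitably contracted against the $\eta,\epsilon$ data) to $Q^{A;\star}$, so invariance descends. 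The care needed here is exactly the bookkeeping of how the coframing loops interact with the arbitrary endpoint vectors $\eta,\epsilon$ — one must check the two canceling factors are inserted \emph{between} the crossing data and the endpoint morphism, not separated from it by other operators; this is visible directly from Figure \ref{fig:RT_invariant} and the reduction of an orbiting move to a coframing exchange used in the proof of Lemma \ref{lm:cf_welldef}. Once this cancellation is in hand, invariance under all generating moves is established and the lemma follows.
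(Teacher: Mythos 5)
Your proposal is correct and matches the paper's approach: the paper's proof simply states that the argument is analogous to that of Lemma \ref{lm:universal_invariance} (reduce to the orbiting move and cancel the $h$, $h^{-1}$ factors) or, alternatively, that the lemma follows as a corollary of Proposition \ref{prop:recovery}, and you have spelled out both of these routes. Nothing essential is missing.
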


\begin{proof}
This can be proven analogously to the proof of Lemma \ref{lm:universal_invariance}. Alternatively, it immediately follows as a corollary from Proposition \ref{prop:recovery} below.
\end{proof}

\begin{example}\label{ex:RT_example}
The definition of $Q^{A;V}$ is illustrated in Figure \ref{fig:RT_example} for the same example knotoid as Example \ref{ex:universal_quantum_example}. From Figure \ref{fig:RT_example} we read off
\[
    Q^{A;V}(K) = n\circ (\text{id}\circ \epsilon\otimes \text{id})\circ (R\otimes \text{id})\circ (R\otimes \text{id})\circ (\eta\otimes u').
\]

\begin{figure}[ht]
    \centering
    \includegraphics[width=.48\linewidth]{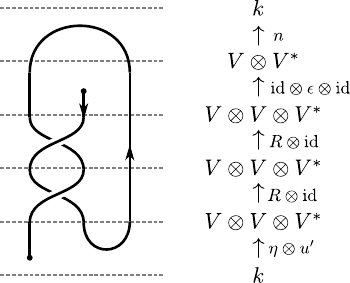}
    \caption{Computing $Q^{A;V}(K)$ for an example knotoid $K$.}
    \label{fig:RT_example}
\end{figure}

\end{example}

As the notation suggests, we should be able to recover $Q^{A;V}$ from $Q^{A;\star}$ by somehow substituting a representation $(V,\rho)$ of $A$ into $Q^{A;\star}$. This is made precise by the following proposition:

\begin{proposition}\label{prop:recovery}
Let $A$ be a ribbon Hopf algebra and $(V,\rho)$ a finite-dimensional representation of $A$. Then for $K$ a biframed knotoid we have
\begin{equation}\label{eq:recovery}
    Q^{A;V}(K) = \epsilon \circ \rho\left( Q^{A;\star}(K) \right) \circ \eta,
\end{equation}
where $\eta:k\to V$ and $\epsilon:V\to k$ are the linear maps used in the definition of $Q^{A;V}(K)$.
\end{proposition}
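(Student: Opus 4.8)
The plan is to prove the identity by reducing both sides to a computation on elementary knotoid diagram pieces and then invoking functoriality of both constructions with respect to composition and tensor product. First I would fix a decomposition of $K$ into a finite sequence of compositions and tensor products of elementary knotoid diagram pieces, as guaranteed by the definition of $Q^{A;\star}$. Both $Q^{A;V}$ and the right-hand side $\epsilon\circ\rho(Q^{A;\star}(-))\circ\eta$ are then determined by their behaviour on these pieces, provided we can show that the assignment $D\mapsto \epsilon\circ\rho(Q^{A;\star}(D))\circ\eta$ is compatible with $\otimes$ and $\circ$ in the same way that $Q^{A;V}$ is. The subtlety here is that $Q^{A;\star}$ on a sub-tangle $D$ with several open ends is not literally a single element of $A$ but an element of a tensor power $A^{\otimes n}$ (one tensor factor per strand), with the leg/head strand carrying the running product; so the induction should really be set up on tangles-with-at-most-two-boundary-marks, tracking the $A^{\otimes n}$-valued ``partial evaluation'' and its image under $(\epsilon\text{ or }\rho)^{\otimes n}$ with $\eta,\epsilon$ plugged into the knotoid strand. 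With that bookkeeping, compatibility with $\otimes$ is immediate (juxtaposition becomes tensor product on both sides, and $\rho^{\otimes(m+n)}=\rho^{\otimes m}\otimes\rho^{\otimes n}$), and compatibility with $\circ$ reduces to the statement that gluing strands corresponds, on the $A$-side, to multiplying the elements recorded along the glued strands and then applying $\rho$ — which commutes with the algebra multiplication because $\rho$ is an algebra homomorphism, i.e.\ $\rho(xy)=\rho(x)\rho(y)$.

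Next I would check the identity on each elementary knotoid diagram piece by direct comparison of Figure~\ref{fig:universal_invariant} with Figure~\ref{fig:RT_invariant}. For a positive crossing, $Q^{A;\star}$ records $\mathcal{R}=\sum_i \alpha_i\otimes\beta_i$ on the two strands, and after applying $\rho\otimes\rho$ and composing with the flip $\tau$ one obtains exactly $R=\tau\circ(\rho\otimes\rho)(\mathcal{R})$, matching $R(x\otimes y)=\sum_i\rho(\beta_i)(y)\otimes\rho(\alpha_i)(x)$ as recorded in the definition of $Q^{A;V}$; the negative crossing is handled identically using $\mathcal{R}^{-1}=\sum_i\alpha_i'\otimes\beta_i'$. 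For the cups and caps one matches the factors of $h=\rho(uv^{-1})$ and $h^{-1}$: the bends in Figure~\ref{fig:universal_invariant} carry $uv^{-1}$ and $vu^{-1}$, and under $\rho$ these become $h$ and $h^{-1}$, which is precisely how $n,n',u,u'$ are defined in terms of $h$. For the two pieces carrying the knotoid endpoints, $Q^{A;\star}$ contributes nothing except the start/end of the running product, while on the representation side these become the maps $\eta:k\to V$ and $\epsilon:V\to k$; composing $\epsilon\circ\rho(\,\cdot\,)\circ\eta$ then inserts exactly these maps, which is why they appear on the right-hand side of \eqref{eq:recovery}. All other elementary pieces are identity strands or orientation reversals and are trivially compatible.

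Having verified the identity on generators and checked compatibility with the two composition operations, the general case follows by induction on the length of the chosen decomposition of $K$: writing $K=D_1\star D_2$ with $\star\in\{\otimes,\circ\}$, one applies the inductive hypothesis to $D_1$ and $D_2$ and then the compatibility statements above. Since a biframed planar knotoid diagram has no free open ends other than its leg and head, the final evaluation lands in $\mathrm{Hom}(k,k)\cong k$ on the left and in $\epsilon\circ\rho(A)\circ\eta\subseteq\mathrm{Hom}(k,k)$ on the right, and the two agree. Finally, invariance of $Q^{A;V}$ (the preceding Lemma) is not needed as input — indeed it drops out as a corollary, since $Q^{A;\star}$ is already known to be an invariant by Lemma~\ref{lm:universal_invariance} and \eqref{eq:recovery} expresses $Q^{A;V}$ as a fixed linear functional of it.

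The step I expect to be the main obstacle is setting up the inductive framework cleanly: one must be careful that $Q^{A;\star}$ of a proper sub-tangle is genuinely an $A^{\otimes n}$-valued (or, for the strand bearing the leg, a ``product-so-far''-valued) gadget rather than a single algebra element, and that ``running through from leg to head and multiplying'' interacts correctly with the composition $\circ$ when the leg/head strand is split across $D_1$ and $D_2$. Once the correct statement for sub-tangles is formulated — essentially that $\rho^{\otimes n}$ applied to the $A^{\otimes n}$-valued invariant of a tangle equals the corresponding $\mathrm{End}$-valued Reshetikhin–Turaev data, with $\eta,\epsilon$ inserted on the knotoid strand — the rest is the routine generator-by-generator check sketched above, using only that $\rho$ is an algebra map.
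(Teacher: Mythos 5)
Your generator-by-generator matching (crossings give $\mathcal{R}\mapsto R=\tau\circ(\rho\otimes\rho)(\mathcal{R})$, the decorated bends give $uv^{-1},vu^{-1}\mapsto h,h^{-1}$, and the endpoints give $\eta,\epsilon$) is exactly the comparison the paper makes, and your observation that invariance of $Q^{A;V}$ drops out as a corollary matches Remark-level statements in the text. The gap is in the inductive framework you wrap around this check. Your proposed key lemma --- that ``$\rho^{\otimes n}$ applied to the $A^{\otimes n}$-valued invariant of a sub-tangle equals the corresponding $\mathrm{End}$-valued Reshetikhin--Turaev data'' --- is not correct as stated, and the claim that compatibility with $\circ$ ``reduces to $\rho(xy)=\rho(x)\rho(y)$'' would fail for the sub-tangles that actually occur. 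Concretely: (i) for an arc containing a cap or cup the RT value lies in $\Hom(V\otimes V^*,k)$ or $\Hom(k,V^*\otimes V)$, which is not of the form $\rho(a)$ for any $a\in A$, so the asserted equality does not even typecheck; the comparison there must pass through the duality maps $n,n',u,u'$ with their $h^{\pm1}$ insertions, i.e.\ through matrix coefficients, and on downward-oriented arcs one is really comparing with transposes, where the order of composition reverses. (ii) Composition of RT morphisms is contraction of indices at the glued boundary points (some of them $V$, some $V^*$), not multiplication of two single endomorphisms, while on the algebra side a strand of $D_1\circ D_2$ may pass through each piece several times, so the beads of $D_1$ and $D_2$ interleave along it; since $A$ is noncommutative, the composite label is an ordered product governed by the traversal order, not a factorwise product of the two tensors. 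A correct gluing statement must therefore keep track of where the beads sit and of the $V$-versus-$V^*$ type of each boundary point --- this bookkeeping is the actual content of the proposition, not a routine afterthought.

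The paper resolves exactly this by never asserting functoriality for sub-tangles: it introduces the state sum $\sum_S\prod_E w(E)$ over basis labels of the edges of a Morse decomposition, with weights $n_{ij}=h^j_i$, $n'_{ij}=\delta_{ij}$, $u^{ij}=(h^{-1})^j_i$, $u'^{ij}=\delta_{ij}$ and the crossing weights, and shows this single quantity equals both sides: expanding $Q^{A;V}(K)$ in a basis turns all compositions and tensor products into index contractions, while the identities $R^{ij}_{kl}=\sum_m\rho(\alpha_m)^j_k\,\rho(\beta_m)^i_l$ and $h^i_j=\rho(uv^{-1})^i_j$ identify the same sum with $\epsilon\circ\rho\left(Q^{A;\star}(K)\right)\circ\eta$. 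If you want to keep your inductive phrasing, the inductive invariant has to be precisely this matrix-coefficient statement for bead-decorated sub-tangles (beads kept in position, boundary points typed by $V$ or $V^*$); once you formulate that, your argument becomes the paper's state-sum proof.
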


\begin{proof}
This proof is analogous to that for the analogous statement for knots, given in \cite{ohtsuki2002quantum}. Let $n$ be the dimension of $V$. We introduce a state sum formula, and note that it is equal to both sides of Equation \eqref{eq:recovery}. 

Pick a basis $\{e_i\}$ of $V$. To construct the state sum we will represent a linear map $f\in\text{Hom}(V^{\otimes n},V^{\otimes m})$ using tensor notation as $f_{a_1\dots a_n}^{b_1\dots b_m}$ so that $f(e_{a_1}\otimes\dots\otimes e_{a_n})=\sum_{b_1\dots b_m} f_{a_1\dots a_n}^{b_1\dots b_m} e_{b_1}\otimes\dots\otimes e_{b_m}$. We represent maps in $\text{Hom}\left(V^{\otimes n}\otimes (V^*)^{\otimes p},V^{\otimes m}\otimes (V^*)^{\otimes q}\right)$ similarly using the basis $\{e^i\}$ dual to $\{e_i\}$.

Picking a Morse decomposition for $K$, we see it as a decorated planar graph with vertices at its crossings and critical points of the vertical coordinate. We associate a label to each of its edges. A state of the diagram $K$ is an association of an element of $\{1,2,\dots,n\}$ to every label. We associate a weight $w(E)$ to every elementary knotoid diagram piece via Figure \ref{fig:state_pieces} where, in order to agree with the construction of $Q^{A;V}$ we define
\[
    n_{ij}\defeq h^j_i,\qquad
    n'_{ij}\defeq \delta_{ij},\qquad
    u^{ij}\defeq (h^{-1})^j_i,\qquad
    u'^{ij} = \delta_{ij}.
\]

\begin{figure}[ht]
    \centering
    \includegraphics[width=.8\linewidth]{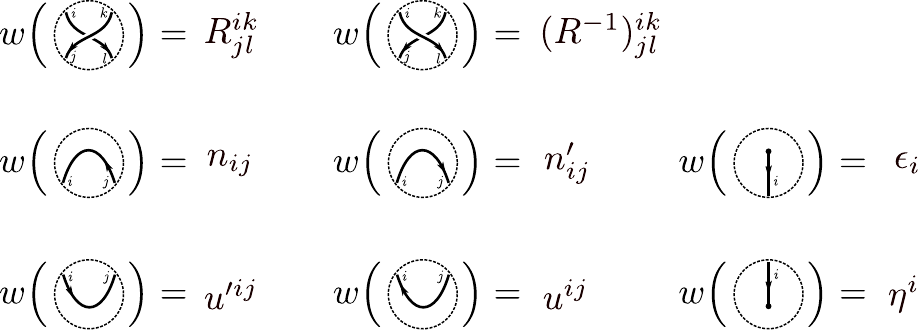}
    \caption{Weights of the elementary knotoid diagram pieces.}
    \label{fig:state_pieces}
\end{figure}

We define the state sum of $K$ to be
\[
    \sum_S \prod_E w(E),
\]
where the sum is over all states of $K$ and the product is over all elementary diagram pieces in a decomposition of $K$ into elementary pieces. Now, we have
\[
    Q^{A;V}(K) = \sum_S \prod_E w(E) = \epsilon \circ \rho\left( Q^{A;\star}(K) \right) \circ \eta.
\]
Here the first equality follows from expanding the constituents of $Q^{A;V}(K)$ into a sum over indices, and the second equality follows from noting that
\[
    R^{ij}_{kl} = \sum_m \rho(\alpha_m)^j_k\rho(\beta_m)^i_l
    \qquad\text{ and }\qquad
    h^i_j = \rho(uv^{-1})^i_j.
\]
See \cite[Ch.~4]{ohtsuki2002quantum} for further details.
\end{proof}

\begin{example}
To illustrate the state sum formula from the proof of Proposition \ref{prop:recovery} we compute it for the same example knotoid as in Examples \ref{ex:universal_quantum_example} and \ref{ex:RT_example}, and use the result to illustrate the proof of Proposition \ref{prop:recovery}. Label $K$ according to Figure \ref{fig:state_example}.

\begin{figure}[ht]
    \centering
    \includegraphics[width=.17\linewidth]{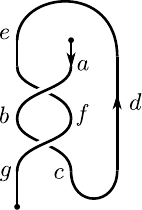}
    \caption{A generic state of an example knotoid $K$.}
    \label{fig:state_example}
\end{figure}

Using Figure \ref{fig:state_example} we compute
\[
    \sum_S \prod_E w(E) = \sum R^{ea}_{bf} R^{bf}_{gc} u'^{cd} n_{ed} \epsilon_a \eta^g.
\]
Filling in the definitions of $R$, $n$, and $u'$, we compute
\begin{align*}
    \sum_S \prod_E w(E) &= \sum \sum_{k,l} \rho(\alpha_k)^a_b \rho(\beta_k)^e_f \rho(\alpha_l)^f_g \rho(\beta_l)^b_c \delta_{cd} \rho(uv^{-1})^d_e \epsilon_a \eta^g\\
    &= \sum \sum_{k,l} \epsilon_a \rho(\alpha_k)^a_b \rho(\beta_l)^b_c \rho(uv^{-1})^c_e \rho(\beta_k)^e_f \rho(\alpha_l)^f_g \eta^g\\
    &= \sum \epsilon_a\,\, \rho\left( \sum_{k,l} \alpha_k \beta_l uv^{-1} \beta_k \alpha_l \right)^a_g\,\, \eta^g\\
    &= \sum \epsilon_a \rho(Q^{A;\star}(K))^a_g \eta^g\\
    &= \epsilon \circ \rho\left( Q^{A;\star}(K) \right) \circ \eta,
\end{align*}
where the fourth equality follows from Example \ref{ex:universal_quantum_example}. On the other hand, expanding $Q^{A;V}(K)$ from Example \ref{ex:RT_example} into index notation we compute
\begin{align*}
    Q^{A;V}(K) &= n\circ (\text{id}\circ \epsilon\otimes \text{id})\circ (R\otimes \text{id})\circ (R\otimes \text{id})\circ (\eta\otimes u')\\
    &= \sum n_{a_1a_2} (\text{id}\circ \epsilon\otimes \text{id})^{a_1a_2}_{b_1b_2b_3} (R\otimes \text{id})^{b_1b_2b_3}_{c_1c_2c_3} (R\otimes \text{id})^{c_1c_2c_3}_{d_1d_2d_3} (\eta\otimes u')^{d_1d_2d_3}\\
    &= \sum n_{a_1a_2} \delta_{a_1b_1} \epsilon_{b_2} \delta_{a_2b_3} R^{b_1b_2}_{c_1c_2} \delta_{b_3c_3} R^{c_1c_2}_{d_1d_2} \delta_{c_3d_3} \eta^{d_1} u'^{d_2d_3}\\
    &= \sum n_{b_1b_3} \epsilon_{b_2} R^{b_1b_2}_{c_1c_2} R^{c_1c_2}_{d_1d_2} \eta^{d_1} u'^{d_2b_3}\\
    &= \sum_S \prod_E w(E),
\end{align*}
where the last equality follows from the change in indices
\[
    (a,b,c,d,e,f,g)\leftrightarrow(b_2,c_1,d_2,b_3,b_1,c_2,d_1).
\]
\end{example}

We conclude this subsection with some results on the behaviour of $Q^{A;\star}$ under taking the reverse $-K$ of a knotoid $K$; recall Definition \ref{def:reversal}.

\begin{lemma}\label{lm:reversal}
Let $K$ be an oriented biframed planar knotoid. Then $Q^{A;\star}(-K) = S\left(Q^{A;\star}(K)\right)$, where $S$ is the antipode of $A$.
\end{lemma}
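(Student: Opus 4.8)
The plan is to reduce the statement to a local computation on the elementary knotoid diagram pieces, in complete analogy with the proof of the same identity for the universal invariant of long knots \cite{ohtsuki2002quantum}. First I would unwind Definition \ref{def:reversal}: writing $\overline{K}$ for the diagram obtained from $K$ by reversing the orientation of its single strand, the diagram $-K$ is $\overline{K}$ with a small hook attached at each of the two endpoints to restore the tangent directions. Fix a Morse (elementary-piece) decomposition of $K$; then the same collection of crossings and critical points, with every orientation flipped, is a decomposition of $\overline{K}$, and traversing $-K$ from leg to head runs through these pieces in the reverse of the order in which they are met in $K$. Consequently $Q^{A;\star}(-K)$ is the product --- taken in this reversed order, and then summed over all $\mathcal{R},\mathcal{R}^{-1}$-indices --- of the values $Q^{A;\star}$ assigns to the elementary pieces of $K$ with reversed orientations, bracketed by the two hook contributions.

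The heart of the proof is then the observation that reversing the orientation of an elementary piece replaces each algebra label it carries by its image under the antipode $S$, with strands kept in place. For a crossing this is exactly the content of the standard identities $(S\otimes S)(\mathcal{R})=\mathcal{R}$ and $(S\otimes S)(\mathcal{R}^{-1})=\mathcal{R}^{-1}$ for a quasitriangular Hopf algebra: reversing both strands of a positive crossing turns the labels $\alpha_i,\beta_i$ into $S(\alpha_i),S(\beta_i)$ on the same two strands (and likewise $\alpha_i',\beta_i'\mapsto S(\alpha_i'),S(\beta_i')$ at a negative crossing). For a critical point, reversing the orientation exchanges the two decorations $uv^{-1}$ and $vu^{-1}$; since $uv^{-1}$ is grouplike in the ribbon Hopf algebra $A$ (using that $v$ is central with $S(v)=v$ one computes $\Delta(uv^{-1})=uv^{-1}\otimes uv^{-1}$), it satisfies $S(uv^{-1})=(uv^{-1})^{-1}=vu^{-1}$, so again the reversed label is the $S$-image of the original. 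Granting this, and that $S$ is an algebra anti-homomorphism commuting with the formal index summations, we have $S(a_1a_2\cdots a_N)=S(a_N)\cdots S(a_1)$; that is, the reversed product of $S$-images is $S$ of the original product, so $Q^{A;\star}(\overline{K})=S\bigl(Q^{A;\star}(K)\bigr)$.

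Finally one disposes of the hooks: each hook in Figure \ref{fig:hooks} is chosen so that its $Q^{A;\star}$-contribution is $1$ --- a direct check from Figure \ref{fig:universal_invariant}, in the same spirit as the cancellation of $uv^{-1}$ and $vu^{-1}$ factors in the proof of Lemma \ref{lm:universal_invariance}, and consistent with the hooks leaving both framing and coframing unchanged. (By invariance, Lemma \ref{lm:universal_invariance}, one may equally use the alternative description of $-K$ given in Definition \ref{def:reversal}.) Hence $Q^{A;\star}(-K)=Q^{A;\star}(\overline{K})=S\bigl(Q^{A;\star}(K)\bigr)$. I expect the only real obstacle to be the bookkeeping in the middle paragraph: pinning down the orientation and crossing-sign conventions of Figure \ref{fig:universal_invariant} and verifying the rule ``reverse orientation $\leftrightarrow$ apply $S$ to every label, keeping strand positions'' literally for each elementary piece (both crossings and all four extrema types), together with confirming that the hooks evaluate to $1$; once those local checks are in hand the global identity follows formally from the anti-homomorphism property of $S$.
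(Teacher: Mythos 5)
First, note that the paper itself gives no proof of this lemma: it defers entirely to \cite[Thm.~47]{bar2021} and to \cite{habiro2006bottom}. Your attempt to reconstruct the argument from scratch follows the standard route of those references (local relabelling by $S$, then the anti-homomorphism property of $S$ to reverse the product), and the algebraic inputs you quote are correct: $(S\otimes S)(\mathcal{R})=\mathcal{R}$, $(S\otimes S)(\mathcal{R}^{-1})=\mathcal{R}^{-1}$, $uv^{-1}$ is grouplike, and $S(uv^{-1})=vu^{-1}$. The gap is in the pivotal local claim of your middle paragraph. Under the paper's conventions --- which one can read off from the proof of Proposition \ref{prop:recovery}, where the state-sum weights are $n_{ij}=h^j_i$, $n'_{ij}=\delta_{ij}$, $u^{ij}=(h^{-1})^j_i$, $u'^{ij}=\delta_{ij}$ with $h=\rho(uv^{-1})$, and from Example \ref{ex:universal_quantum_example}, whose answer contains a single $uv^{-1}$ --- only one of the two orientations of a local maximum carries $uv^{-1}$ (the other carries $1$), and only one of the two orientations of a local minimum carries $vu^{-1}$ (the other carries $1$). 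Orientation reversal sends maxima to maxima and minima to minima, so at a single critical point it exchanges the $uv^{-1}$-decorated cap with the \emph{trivially} decorated cap, never $uv^{-1}$ with $vu^{-1}$; since $S(uv^{-1})=vu^{-1}\neq 1$, your rule ``reverse orientation $=$ apply $S$ to every label in place'' is literally false for the extrema, and the promised piece-by-piece check breaks exactly where the content of the lemma sits (these grouplike factors are the reason $Q^{A;\star}$ is only a biframed, rotational-type invariant in the first place).

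The claim can be repaired, but not by a per-extremum verification. One must group the extrema along the strand: cap--cup pairs with zero net turning contribute $1$ both before and after reversal, while the pairs forming full rotations $C^{\pm1}$ carry $uv^{-1}$ resp.\ $vu^{-1}$, and reversal sends $C\mapsto C^{-1}$; at the level of this rotational decomposition the rule ``reverse $=$ apply $S$'' does hold, and combined with $(S\otimes S)(\mathcal{R}^{\pm1})=\mathcal{R}^{\pm1}$ and the anti-homomorphism property it gives the identity up to the endpoint contributions. Relatedly, your assertion that the hooks of Definition \ref{def:reversal} contribute $1$ is not a triviality under this asymmetric convention: each hook is a half-turn, i.e.\ a single cap or cup, and a single extremum can carry a nontrivial factor; mishandling them yields $S\left(Q^{A;\star}(K)\right)$ only up to stray one-sided factors of $(uv^{-1})^{\pm1}$ at the two ends of the word, and it is precisely the coframing-preserving choice of hooks in Figure \ref{fig:hooks} that must be checked to eliminate them. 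With the extrema regrouped and the hook contributions actually computed, your argument goes through and essentially reproduces the proofs in the cited literature; as written, it does not.
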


This behaviour of $Q^{A;\star}$ is well-known \cite[Thm.~47]{bar2021}. For a proof see e.g.~\cite{habiro2006bottom}. We note the following result as a corollary:

\begin{corollary}\label{cor:S2}
Let $K$ be an oriented biframed planar knotoid. Then $Q^{A;\star}(K)$ is fixed under $S^2$, namely $S^2\left(Q^{A;\star}(K)\right)=Q^{A;\star}(K)$. Similarly $S^{-2}\left(Q^{A;\star}(K)\right)=Q^{A;\star}(K)$.
\end{corollary}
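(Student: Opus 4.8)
The approach is to apply Lemma~\ref{lm:reversal} twice. Write $Q \defeq Q^{A;\star}(K)$ for brevity. Substituting $-K$ in place of $K$ in Lemma~\ref{lm:reversal} gives
\[
    Q^{A;\star}\bigl(-(-K)\bigr) \;=\; S\bigl(Q^{A;\star}(-K)\bigr) \;=\; S\bigl(S(Q)\bigr) \;=\; S^2(Q),
\]
so it suffices to show that $-(-K)$ and $K$ are equivalent as oriented biframed planar knotoids; the $S^2$-invariance then follows, since $Q^{A;\star}(-(-K)) = Q^{A;\star}(K) = Q$ by the invariance established in Lemma~\ref{lm:universal_invariance}.

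First I would check that $-(-K) \simeq K$. Reversing the orientation of a diagram twice restores the original orientation, so $-(-K)$ is obtained from $K$ by inserting, at each of the leg and head, a hook (the one used to form $-K$) followed by its orientation-reverse (the one used to form $-(-K)$). At each endpoint these two successive hooks cancel: using the alternative placement of a hook described in Definition~\ref{def:reversal} together with the orbiting isotopy of Figure~\ref{fig:orbiting_move}, one slides one hook past the other and undoes the two resulting half-turns, leaving near each endpoint an arc isotopic, rel a neighbourhood of the endpoint, to the corresponding arc of $K$. Hence $-(-K)$ is related to $K$ by a sequence of biframed planar knotoid equivalences. As a consistency check one notes that both the framing and the coframing are preserved at every step — the framing because each hook is a writhe-$0$ tangle, the coframing because the hooks of Definition~\ref{def:reversal} are built to preserve $\text{cofr}$ — which is in any case implicit in the statement of Lemma~\ref{lm:reversal}. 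Combining with the display above yields $S^2(Q) = Q$.

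The $S^{-2}$ statement is then formal: the antipode of a ribbon Hopf algebra is bijective (indeed $S^2$ is conjugation by the Drinfeld element $u$, with $v$ central), so $S^{-2}$ is defined, and applying it to both sides of $S^2(Q) = Q$ gives $Q = S^{-2}(Q)$. The only step that is not purely formal is the diagrammatic cancellation of the doubled hooks, i.e.\ the verification that $-(-K)$ is equivalent to $K$; I would expect the care to lie in keeping track of the two half-turns so that no spurious $\pm 1$ contribution to the coframing is introduced, although this is routine given the orbiting move already in hand.
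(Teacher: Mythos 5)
Your argument is correct and follows essentially the same route as the paper: apply Lemma~\ref{lm:reversal} twice to get $S^2\bigl(Q^{A;\star}(K)\bigr)=Q^{A;\star}(-(-K))$, show $-(-K)\simeq K$, and conclude by invariance, with the $S^{-2}$ statement following from bijectivity of the antipode. The only cosmetic difference is that where you cancel the doubled hooks diagrammatically via the orbiting move, the paper simply observes that $-(-K)$ is $K$ with a $+1$ coframing loop at the leg and a $-1$ loop at the head and invokes Lemma~\ref{lm:diagrambijection}; both justifications rest on the same underlying moves.
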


\begin{proof}
By Lemma \ref{lm:reversal} we have that $S^2\left(Q^{A;\star}(K)\right)=Q^{A;\star}(-(-K))$. From Figure \ref{fig:hooks} it is immediate that $-(-K)$ is just $K$ with a $+1$ coframing loop at its leg and a $-1$ coframing loop at its head. Thus by Lemma \ref{lm:diagrambijection} we have that $-(-K)\simeq K$. Therefore the first claim follows since $Q^{A;\star}$ is a biframed knotoid invariant. The second claim follows immediately from the first.
\end{proof}

\subsection{The Ribbon Hopf Algebra $\mathbb{D}$}


Having covered the generalities of quantum invariants of knotoids, we will now introduce a specific ribbon Hopf algebra $\mathbb{D}$ and discuss its associated universal invariant. These are both discussed in \cite{bar2021}, where a Mathematica implementation for computing $Q^{\mathbb{D};\star}$ is also given. After this subsection we use this implementation to compute example values of $Q^{\mathbb{D};\star}$ for several knotoids; also see Appendix \ref{app:code}.

In our discussion only the algebra structure, universal $R$-matrix, and the ribbon element will be needed. 

\begin{definition}
$\mathbb{D}$ is the algebra over $\mathbb{Q}[\epsilon]\llbracket \hbar\rrbracket$ 
generated by $\mathbf{y},\mathbf{b},\mathbf{a},\mathbf{x}$ subject to the relations
\[
    \mathbf{x}\mathbf{y} = e^{\epsilon\hbar} \mathbf{y}\mathbf{x} + \frac{1-e^{-\epsilon\hbar\mathbf{a}-\hbar\mathbf{b}}}{\hbar}
\]
\[
    [\mathbf{a},\mathbf{x}] = \mathbf{x},\quad
    [\mathbf{b},\mathbf{x}] = \epsilon\mathbf{x},\quad
    [\mathbf{a},\mathbf{y}] = -\mathbf{y},\quad
    [\mathbf{b},\mathbf{y}] = -\epsilon\mathbf{y},\quad
    [\mathbf{a},\mathbf{b}] = 0.
\]
\end{definition}

Before we can introduce the $R$-matrix we set $q=e^{\epsilon\hbar}$ and
$[k]_q! = \prod_{j=1}^k\frac{1-q^k}{1-q}$. Using the Drinfeld double construction it was found in \cite{bar2021} that:

\begin{theorem}
$\mathbb{D}$ a ribbon Hopf algebra with quasitriangular structure
\[
    \mathcal{R} = \sum_{m,n=0}^\infty \frac{\hbar^{m+n}}{[m]_q!n!} \mathbf{y}^m\mathbf{b}^n\otimes \mathbf{a}^n\mathbf{x}^m
\]
and with ribbon element $v$ yielding
\[
    uv^{-1} = \left( e^{-\epsilon\hbar\mathbf{a}-\hbar\mathbf{b}} \right)^{\frac{1}{2}}.
\]
\end{theorem}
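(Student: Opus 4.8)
This result is established in \cite{bar2021}; the route I would take is the Drinfeld (quantum) double, which makes quasitriangularity automatic and leaves only explicit computations.

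\textbf{Step 1: present $\mathbb{D}$ as a Drinfeld double.} First I would introduce the topological Hopf algebra $H$ over $\mathbb{Q}[\epsilon]\llbracket\hbar\rrbracket$ generated by $\mathbf{a},\mathbf{x}$ with $[\mathbf{a},\mathbf{x}]=\mathbf{x}$ and an appropriate coproduct — with $\mathbf{a}$ primitive and $\mathbf{x}$ skew-primitive — so that $H$ is a quantization of $U$ of the two-dimensional non-abelian Lie algebra (morally an ``upper Borel''). Its continuous dual $H^{\circ}$ is then topologically generated by elements I will call $\mathbf{b},\mathbf{y}$, paired with $\mathbf{a},\mathbf{x}$ respectively. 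Forming the double $D(H)=H^{\circ\,\mathrm{cop}}\bowtie H$, Drinfeld's theorem hands us a quasitriangular structure for free; the content is then only to check that $D(H)\cong\mathbb{D}$ as an algebra. That is a PBW bookkeeping exercise: writing the double's cross-multiplication rule in terms of the pairing $\langle\,\cdot\,,\,\cdot\,\rangle\colon H^{\circ}\otimes H\to\mathbb{Q}[\epsilon]\llbracket\hbar\rrbracket$, one verifies it reproduces exactly $\mathbf{x}\mathbf{y}=e^{\epsilon\hbar}\mathbf{y}\mathbf{x}+\tfrac{1-e^{-\epsilon\hbar\mathbf{a}-\hbar\mathbf{b}}}{\hbar}$ together with $[\mathbf{a},\mathbf{x}]=\mathbf{x}$, $[\mathbf{b},\mathbf{x}]=\epsilon\mathbf{x}$, $[\mathbf{a},\mathbf{y}]=-\mathbf{y}$, $[\mathbf{b},\mathbf{y}]=-\epsilon\mathbf{y}$, $[\mathbf{a},\mathbf{b}]=0$; the factor $q=e^{\epsilon\hbar}$ and the grouplike term $e^{-\epsilon\hbar\mathbf{a}-\hbar\mathbf{b}}$ fall out of the coproduct chosen for $\mathbf{x}$ and of the elementary pairing values.

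\textbf{Step 2: extract the $R$-matrix.} For a Drinfeld double the universal $R$-matrix is the canonical one, $\mathcal{R}=\sum_i h^{i}\otimes h_{i}$, where $\{h_i\}$ is a topological basis of $H$ and $\{h^{i}\}\subseteq H^{\circ}$ the dual basis, with the $H^{\circ}$-factor placed on the left to match the stated formula. Taking the PBW basis $h_{m,n}\propto \mathbf{a}^{n}\mathbf{x}^{m}$ of $H$, I would compute the dual basis $h^{m,n}\propto \mathbf{y}^{m}\mathbf{b}^{n}$ from multiplicativity of the pairing together with the two elementary values $\langle\mathbf{b},\mathbf{a}\rangle$ and $\langle\mathbf{y},\mathbf{x}\rangle$; the sum $\sum_{m,n}h^{m,n}\otimes h_{m,n}$ then collapses to $\sum_{m,n\ge 0}\tfrac{\hbar^{m+n}}{[m]_q!\,n!}\,\mathbf{y}^{m}\mathbf{b}^{n}\otimes \mathbf{a}^{n}\mathbf{x}^{m}$. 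The reason the $\mathbf{x}$-tower contributes a $q$-factorial $[m]_q!$ while the $\mathbf{a}$-tower contributes an ordinary $n!$ is precisely that $\mathbf{x}$ is $q$-primitive — so its divided powers obey a $q$-binomial theorem — whereas $\mathbf{a}$ is genuinely primitive; equivalently, $\mathcal{R}$ is a reordering of a $q$-exponential $e_q^{\hbar\,\mathbf{y}\otimes\mathbf{x}}$ times an ordinary exponential $e^{\hbar\,\mathbf{b}\otimes\mathbf{a}}$. Fixing the normalizations so that this product is literally the displayed sum is the bulk of the labour.

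\textbf{Step 3: the ribbon element and $uv^{-1}$.} Write $K=e^{-\epsilon\hbar\mathbf{a}-\hbar\mathbf{b}}$. Since $\mathbf{a},\mathbf{b}$ commute and are primitive, $K$ is grouplike, and as its exponent lies in $\hbar\,\mathbb{D}$ the square root $K^{1/2}=e^{(-\epsilon\hbar\mathbf{a}-\hbar\mathbf{b})/2}$ is a well-defined grouplike element of the $\hbar$-adic completion. Using only the bracket relations one checks that conjugation by a grouplike element, which a short computation identifies as $K^{1/2}$, implements $S^{2}$ on the generators. Since the Drinfeld element $u=\sum_i S(\beta_i)\alpha_i$ also satisfies $uzu^{-1}=S^{2}(z)$ and $v$ is to be central, setting $v\defeq K^{-1/2}u$ produces a central element with $uv^{-1}=K^{1/2}=(e^{-\epsilon\hbar\mathbf{a}-\hbar\mathbf{b}})^{1/2}$, which is the asserted identity. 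It then remains to confirm that this $v$ satisfies the four ribbon axioms $v^{2}=S(u)u$, $\Delta(v)=(v\otimes v)(\mathcal{R}_{21}\mathcal{R})^{-1}$, $S(v)=v$, $\epsilon(v)=1$; these reduce to the standard identities for the Drinfeld element ($\Delta(u)=(\mathcal{R}_{21}\mathcal{R})^{-1}(u\otimes u)$, the formula for $S(u)$, $\epsilon(u)=1$) together with two facts checked directly from the Step~2 formula: that $K^{1/2}\otimes K^{1/2}$ commutes with $\mathcal{R}$ — each summand acquires inverse powers of $q$ from the $\mathbf{y}$- and $\mathbf{x}$-towers which cancel — and hence with $\mathcal{R}_{21}\mathcal{R}$, and that $K^{1/2}$ commutes with $u$. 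Structurally this is just the general statement that a Drinfeld double is ribbon once its Cartan part contains the requisite square-root grouplike, here $K^{1/2}$.

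\textbf{Main obstacle.} The conceptual steps are cheap — quasitriangularity is free from the double construction — so the genuine difficulty is entirely computational: obtaining the dual PBW basis so that the canonical $R$-matrix telescopes into the stated $q$-exponential-type sum with exactly the $[m]_q!$-versus-$n!$ normalization, identifying the balancing grouplike $K^{1/2}$ (equivalently, pinning down $uv^{-1}$ in closed form), and carrying the $\hbar$-adic convergence bookkeeping honestly through all of it.
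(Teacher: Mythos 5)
Your outline is essentially the paper's own route: the paper does not prove this theorem itself but quotes it from \cite{bar2021}, where $\mathbb{D}$ is obtained exactly as you describe --- via the Drinfeld double construction, with the canonical $R$-matrix computed from a dual PBW basis and the ribbon element pinned down through the grouplike $\left(e^{-\epsilon\hbar\mathbf{a}-\hbar\mathbf{b}}\right)^{1/2}$ implementing $S^2$. The only cosmetic difference is that \cite{bar2021} doubles the $\{\mathbf{y},\mathbf{b}\}$ side (more precisely the rescaled subalgebra generated by $\hbar\mathbf{y},\hbar\mathbf{b}$, which is how the $\hbar$-adic duality bookkeeping you flag is handled) and recovers $\mathbf{a},\mathbf{x}$ as the dual generators, rather than the other way around.
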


Since $\mathbb{D}$ is ribbon, the techniques of Section \ref{subsec:universal_invariants} yield a universal quantum invariant of biframed knotoids associated to $\mathbb{D}$.

\begin{remark}
To discuss the implementation from \cite{bar2021} for computing $Q^{\mathbb{D};\star}$ we will use notation from \cite{bar2021}. Accordingly, we let $Z_\mathbb{D}$ denote the universal quantum invariant $Q^{\mathbb{D};\star}$ associated to $\mathbb{D}$. We will also draw our crossings and endpoints directed upwards, rather than downwards, from here on.
\end{remark}

To get an idea of the strength of $Z_\mathbb{D}$ as an invariant of biframed knotoids, we note that $Z_\mathbb{D}$ is stronger than all the colored Jones polynomials of biframed knotoids, where the $k$-th colored Jones polynomial is the Reshetikhin-Turaev invariant associated to $\left( U_\hbar(\mathfrak{sl}_2) , V_k \right)$. Here $U_\hbar(\mathfrak{sl}_2)$ is the well-known quantum group given by a deformation of the universal enveloping algebra of $\mathfrak{sl}_2$, and $V_k$ is its $k$-th irreducible representation. To see that $Z_\mathbb{D}$ is indeed stronger than the colored Jones polynomials, it suffices to note that $U_\hbar(\mathfrak{sl}_2)$ is isomorphic to a quotient of $\mathbb{D}$. Indeed: it is shown in \cite{bar2021} that setting $\epsilon=1$ and $\mathbf{t}\defeq \mathbf{b}-\epsilon\mathbf{a}=0$ in $\mathbb{D}$ yields a ribbon Hopf algebra isomorphic to $U_\hbar(\mathfrak{sl}_2)$.

\subsection{Implementation}\label{subsec:implementation}

We will briefly describe how the computer program given in \cite[App.~B]{bar2021} can be used to compute the invariant $Z_\mathbb{D}$ for knotoids. The Mathematica implementation can also be found on the second author's \href{http://rolandvdv.nl/PG/}{website}\footnote{\texttt{http://rolandvdv.nl/PG/}}. It carries out computations in $\mathbb{D}$ up to fixed order $k$ in $\epsilon$, with $k=1$ being sufficient for our purposes.

Say we are given a biframed knotoid diagram $K$. For simplicity we assume the diagram is upright in that all crossings point upwards. This means the cups and caps come in pairs that either don't contribute to $Z_\mathbb{D}$, or that form a full rotation which we call $C$ when it is counter-clockwise and $C^{-1}$ if it is clockwise. Now assign a label to each underpass and overpass of the crossings and also a label to each $C^{\pm}$, making sure no labels appear twice. Denote by $R_{ij}^\pm$ the positive/negative crossing with upper strand labeled $i$ and lower strand labeled $j$. Also denote by $C^{\pm}_i$ any edge rotating (counter)clockwise carrying label $i$. The knotoid diagram is the result of connecting each crossing and copy of $C$ in the right order, using strands that don't contribute to $Z_\mathbb{D}$. We call such a presentation for a knotoid diagram a \textbf{rotational tangle decomposition}.

In the program the edges with labels $u,v$ are connected by writing $m_{u,v\to w}$. This produces a longer strand that is now labeled $w$. Repeating this process for all components $R_{ij}^\pm$, $C^{\pm}_i$ in the knotoid diagram yields an expression for $Z_\mathbb{D}(K)$ in terms of a multiplication of copies of the values of $R_{ij}^\pm$ and $C^{\pm}_i$. The implementation uses efficient expressions of these values as well as of the multiplication $m_{u,v\to w}$ (namely in terms of `perturbed Gaussian generating functions'; see \cite{bar2021}) to evaluate this expression in $\mathbb{D}$.

A use case for the Mathematica implementation is given in Example \ref{ex:mathematica} below.






\begin{example}\label{ex:mathematica}
Say we consider the oriented biframed planar knotoid diagram $K$ depicted in Figure \ref{fig:5_7}, which is a diagram for the prime planar knotoid tabulated as $5_7$ in \cite{goundaroulis2019systematic}.

\begin{figure}[h]
    \centering
    \includegraphics[width=.5\linewidth]{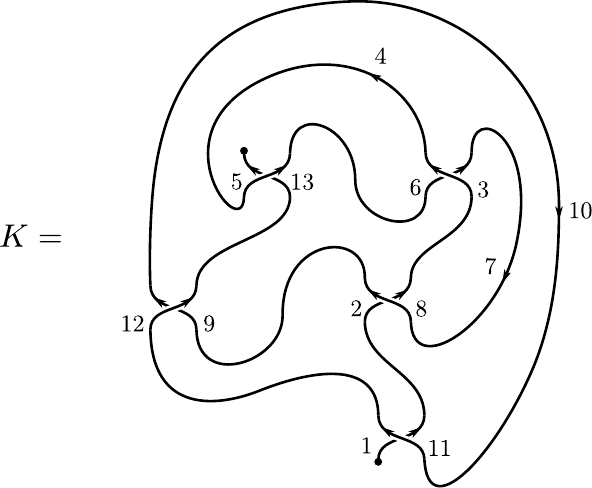}
    \caption{An oriented biframed planar knotoid diagram $K$ representing $5_7$, with labels giving a rotational tangle decomposition of $K$.}
    \label{fig:5_7}
\end{figure}

The diagram $K$ is drawn so that a rotational tangle decomposition of $K$ is immediately visible. This is indicated by the labels in Figure \ref{fig:5_7}, representing the labels $i,j$ of copies of $R_{ij}^\pm$ and $C^{\pm}_i$ in the diagram. Given this rotational tangle decomposition for the diagram $K$ of $5_7$ we can easily read off an expression for $Z_\mathbb{D}$ and enter it into Mathematica as follows:

\begin{mmaCell}[morefunctionlocal={j}]{Input}
Z57 = \mmaSub{\mmaOver{tR}{_}}{11,1} \mmaSub{tR}{12,9} \
\mmaSub{\mmaOver{tR}{_}}{8,2} \mmaSub{\mmaOver{tR}{_}}{3,6} \
\mmaSub{tR}{5,13} \mmaSub{\mmaOver{tC}{_}}{10} \
\mmaSub{\mmaOver{tC}{_}}{7} \
\mmaSub{tC}{4};
Do[Z57 = Z57 // \mmaSub{tm}{1,j\(\pmb{\to}\)1},\{j,2,13\}];
PowerExpand[Z57[[3]] // Simplify]
\end{mmaCell}

Here the first line feeds in the expression for $Z_\mathbb{D}(K)$. The second line carries out the multiplication of the elements of $\mathbb{D}$ associated to the rotational tangle components listed in the first line. The third line simplifies the resulting element of $\mathbb{D}$ to a manageable expression.

Running this code will compute $Z_\mathbb{D}(K)$ up to the selected order $k$ in $\epsilon$. For our purposes it will suffice to take $k=1$. The output is then as follows:

\begin{mmaCell}{Output}
\mmaSqrt{\mmaSub{T}{1}}+\mmaFrac{-4\mmaSubSup{a}{1}{2}\mmaSubSup{T}{1}{2}+\mmaSub{x}{1}\mmaSub{y}{1}(4+8\mmaSub{T}{1}+4\
\mmaSubSup{T}{1}{2}-3\mmaSub{x}{1}\mmaSub{y}{1})-4\mmaSub{a}{1}\mmaSub{T}{1}(-1+\mmaSub{T}{1}+\mmaSubSup{T}{1}{2}+2\mmaSub{x}{1}\mmaSub{y}{1})}{4\mmaSubSup{T}{1}{3/2}}\(\epsilon\)+\mmaSup{O[\(\epsilon\)]}{2}
\end{mmaCell}

The invariant we will be particularly interested in is the first-order coefficient of $Z_\mathbb{D}(K)$ in $\epsilon$. To obtain this invariant immediately we can alternatively run the following code:

\begin{mmaCell}[morefunctionlocal={j}]{Input}
Z57 = \mmaSub{\mmaOver{tR}{_}}{11,1} \mmaSub{tR}{12,9} \
\mmaSub{\mmaOver{tR}{_}}{8,2} \mmaSub{\mmaOver{tR}{_}}{3,6} \
\mmaSub{tR}{5,13} \mmaSub{\mmaOver{tC}{_}}{10} \
\mmaSub{\mmaOver{tC}{_}}{7} \
\mmaSub{tC}{4};
Do[Z57 = Z57 // \mmaSub{tm}{1,j\(\pmb{\to}\)1},\{j,2,13\}];
Coefficient[PowerExpand[Z57[[3]] // Simplify], \(\epsilon\), 1]
\end{mmaCell}

\begin{mmaCell}{Output}
\mmaFrac{-4\mmaSubSup{a}{1}{2}\mmaSubSup{T}{1}{2}+\mmaSub{x}{1}\mmaSub{y}{1}(4+8\mmaSub{T}{1}+4\mmaSubSup{T}{1}{2}-3\mmaSub{x}{1}\mmaSub{y}{1})-4\mmaSub{a}{1}\mmaSub{T}{1}(-1+\mmaSub{T}{1}+\mmaSubSup{T}{1}{2}+2\mmaSub{x}{1}\mmaSub{y}{1})}{4\mmaSubSup{T}{1}{3/2}}
\end{mmaCell}

Now let $-K$ be the reverse biframed diagram of the diagram in Figure \ref{fig:5_7}. By Lemma \ref{lm:reversal}, to compute $Z_\mathbb{D}(-K)$ we can apply $S$ to the result of computing $Z_\mathbb{D}(K)$. In Mathematica this is done as follows:

\begin{mmaCell}{Input}
Do[Z57op = Z57 // \mmaSub{tS}{1}, 1];
Coefficient[PowerExpand[Z57op[[3]] // Simplify], \(\epsilon\), 1]
\end{mmaCell}

\begin{mmaCell}{Output}
-\mmaFrac{4(2+\mmaSub{a}{1})\mmaSubSup{T}{1}{3}+4\mmaSubSup{T}{1}{2}(\mmaSub{a}{1}+\mmaSubSup{a}{1}{2}-\mmaSub{x}{1}\mmaSub{y}{1})+\mmaSub{x}{1}\mmaSub{y}{1}(-4+3\mmaSub{x}{1}\mmaSub{y}{1})+\mmaSub{T}{1}(-8(1+\mmaSub{x}{1}\mmaSub{y}{1})+\mmaSub{a}{1}(-4+8\mmaSub{x}{1}\mmaSub{y}{1}))}{4\mmaSubSup{T}{1}{3/2}}
\end{mmaCell}
\end{example}

Finally, we have mentioned earlier that this Mathematica implementation for computing $Z_\mathbb{D}$ is \textit{efficient}. Namely, the following is immediate from \cite[Thm.~50]{bar2021}:

\begin{corollary}
For $K$ a biframed planar knotoid diagram, one can compute $Z_\mathbb{D}(K)$  up to order $k$ in $\epsilon$ within $\mathcal{O}(n^{2k+2}\log(n))$ integer operations.
\end{corollary}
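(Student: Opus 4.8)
The plan is to reduce the computation of $Z_\mathbb{D}(K)$ to the tangle computation whose running time is bounded in \cite[Thm.~50]{bar2021}, observing that the rotational tangle decomposition procedure of Section~\ref{subsec:implementation} is, step by step, the Bar-Natan--van der Veen algorithm.

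First I would bring the given biframed planar knotoid diagram $K$, with $n$ crossings, into upright form (all crossings oriented upwards) and extract a \textbf{rotational tangle decomposition} as in Section~\ref{subsec:implementation}: a sequence of $O(n)$ elementary pieces of types $R_{ij}^\pm$ and $C_i^\pm$, glued together by $O(n)$ multiplications $m_{u,v\to w}$. The only feature distinguishing $K$ from a long knot ($(1,1)$-tangle) is the pair of free ends at the leg and head; in the universal-invariant formalism these contribute no extra factors — one simply reads off the product of algebra elements from leg to head — so the expression produced for $Z_\mathbb{D}(K)$ has exactly the shape handled in \cite{bar2021}: an ordered product of $O(n)$ perturbed-Gaussian generating functions, assembled via $O(n)$ applications of $m$.

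Next I would invoke \cite[Thm.~50]{bar2021} directly. That theorem asserts that evaluating such an expression in $\mathbb{D}$, truncated at order $k$ in $\epsilon$, costs $\mathcal{O}(n^{2k+2}\log(n))$ integer operations: each elementary value is a perturbed Gaussian whose polynomial part has degree $O(k)$ in $O(n)$ variables, and each $m_{u,v\to w}$ (which substitutes and contracts two of the strand variables) preserves this form and is performed within the stated budget. Since the knotoid expression is literally of this form, with the same number $O(n)$ of pieces and multiplications, the bound applies verbatim and yields the claimed $\mathcal{O}(n^{2k+2}\log(n))$.

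The only point where the argument is not completely formal — and hence the main thing to check — is the passage to an upright rotational tangle decomposition: one must confirm that this does not blow up either the number of elementary pieces or the number of strands present at any intermediate stage beyond $O(n)$, since it is these two quantities that enter the estimate of \cite[Thm.~50]{bar2021}. This is routine, as each crossing spawns a bounded number of pieces (a single $R_{ij}^\pm$ together with $O(1)$ copies of $C_i^\pm$) and the strand count at each stage is controlled by the local width of the Morse diagram; once this is observed, the corollary is immediate.
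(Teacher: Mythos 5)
Your proposal is correct and follows essentially the same route as the paper, which simply observes that the bound is immediate from \cite[Thm.~50]{bar2021} once $Z_\mathbb{D}(K)$ is expressed via a rotational tangle decomposition with $O(n)$ pieces, exactly as in the implementation of Section \ref{subsec:implementation}. Your additional check that the upright decomposition does not inflate the number of elementary pieces or strands beyond $O(n)$ is the right point to verify, and it is indeed routine.
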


We conclude orders of $\epsilon$ in $Z_\mathbb{D}$ are computable in polynomial time for all knotoids, and the Mathematica implementation realizes this \cite{bar2021}.



\section{Examples}\label{sec:examples}

In this section we will refer to specific planar knotoids by using the labels with which they appear in the planar knotoid table given in \cite{goundaroulis2019systematic}. For example, $5_7$ refers to the $7$-th prime knotoid with $5$ crossings listed in that table. Table \ref{tb:unresolved_pairs} lists the pairs $(K_1,K_2)$ of prime planar knotoids with $5$ crossings for which it is conjectured in \cite{goundaroulis2019systematic} that $K_1\not\simeq K_2$.

\begin{table}[h]\label{tb:unresolved_pairs}
\centering
\begin{tabular}{l|l|l}
$K_1$    & $K_2$     & Oriented Gauss code                \\ \hline
$5_7$    & $5_{421}$ & -1 -2 3 4 -3 2 -5 1 5 -4 - - - + + \\
$5_9$    & $5_{561}$ & -1 2 -3 1 -4 5 -2 3 4 -5 - - - + + \\
$5_{12}$ & $5_{593}$ & -1 2 -3 1 4 -5 -2 3 -4 5 - - - - - \\
$5_{19}$ & $5_{796}$ & -1 2 -3 4 -5 1 -2 3 5 -4 - - - + + \\
$5_{21}$ & $5_{814}$ & -1 2 -3 4 -5 1 5 -2 -4 3 - + - - + \\
$5_{24}$ & $5_{891}$ & -1 2 -3 4 5 -4 -2 1 3 -5 - - - - +
\end{tabular}
\caption{Unresolved pairs $(K_1,K_2)$ of $5$-crossing prime planar knotoids from \cite{goundaroulis2019systematic} and their oriented Gauss codes.}
\end{table}

If all of these pairs were resolved, i.e.~shown to be equivalent or distinguished by some invariant of planar knotoids, the classification of prime planar knotoids with up to $5$ crossings would be complete. These pairs are particularly difficult to distinguish as they are all equivalent as spherical knotoids (recall any planar knotoid defines a spherical knotoid by adding $\{\infty\}$ to $\mathbb{R}^2$). This follows because they have diagrams with the same oriented Gauss codes, listed in Table \ref{tb:unresolved_pairs}; see \cite{goundaroulis2019systematic} for details. We claim that $Z_\mathbb{D}$ can distinguish several of these pairs. More specifically:

\begin{theorem}\label{thm:resolution}
The universal quantum invariant $Z_\mathbb{D}$ of oriented biframed planar knotoids can be used to distinguish the pairs $(5_9,5_{561})$ and $(5_{12},5_{593})$ of planar knotoids.
\end{theorem}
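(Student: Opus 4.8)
The plan is to prove the theorem by a direct computation with the Mathematica implementation of Section~\ref{subsec:implementation}. For a pair $(K_1,K_2)$ from Table~\ref{tb:unresolved_pairs} it suffices to produce oriented biframed planar knotoid diagrams $D_1,D_2$, with $D_i$ representing $K_i$ and with $\text{fr}(D_1)=\text{fr}(D_2)$ and $\text{cofr}(D_1)=\text{cofr}(D_2)$, for which $Z_\mathbb{D}(D_1)\neq Z_\mathbb{D}(D_2)$ as elements of $\mathbb{D}$ (recall that for knotoids $Z_\mathbb{D}$ takes values in $\mathbb{D}$ itself, not $\mathbb{D}/I$, by Remark~\ref{rk:codomain}). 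Indeed, were $K_1$ and $K_2$ equivalent as (oriented) planar knotoids, then by Lemma~\ref{lm:diagrambijection} the biframed planar knotoids determined by $(K_i,\text{fr}(D_i),\text{cofr}(D_i))$ would coincide, so by Lemma~\ref{lm:universal_invariance} we would get $Z_\mathbb{D}(D_1)=Z_\mathbb{D}(D_2)$, a contradiction.

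Carrying this out, I would first read planar knotoid diagrams off the oriented Gauss codes in Table~\ref{tb:unresolved_pairs}, isotope them into the upright ``rotational tangle decomposition'' form used in Section~\ref{subsec:implementation} (exactly as for the diagram of $5_7$ in Figure~\ref{fig:5_7}), and record their framing (the writhe) and coframing (computed via crossing changes as in Remark~\ref{rk:coframing_comp}). Within each pair I would then arrange that the two diagrams share a biframing; this is always possible by inserting curls, which realise $R1'$ and shift the framing, and coframing loops at an endpoint, which shift the coframing. Alternatively one may leave the diagrams alone and compare the invariants after dividing out the framing-dependent abelian factor, noting that such modifications change $Z_\mathbb{D}$ only in a controlled, invertible way (a curl multiplies $Z_\mathbb{D}$ by a power of $uv^{-1}=(e^{-\epsilon\hbar\mathbf{a}-\hbar\mathbf{b}})^{1/2}$, and a full coframing exchange acts by the $S^{2}$ of Corollary~\ref{cor:S2}). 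Finally I would run the implementation to obtain $Z_\mathbb{D}$ modulo $\epsilon^{2}$ for all four diagrams --- as done for $5_7$ in Example~\ref{ex:mathematica}, where the $\epsilon^{0}$-part is the abelian term $\sqrt{T_1}$ and the $\epsilon^{1}$-coefficient carries the interesting information --- and compare the outputs for $(5_9,5_{561})$ and for $(5_{12},5_{593})$.

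If one regards planar knotoids as unoriented, one further equivalence must be excluded, namely $K_1\simeq K_2$ via an orientation-reversing identification. Since reversal preserves both framing and coframing (Definition~\ref{def:reversal}) and acts on the invariant by the antipode (Lemma~\ref{lm:reversal}), this reduces to the single extra check that $Z_\mathbb{D}(D_1)\neq S\!\left(Z_\mathbb{D}(D_2)\right)$ for biframing-matched diagrams, which the implementation performs by applying $S$ as in Example~\ref{ex:mathematica}. Collecting the outputs yields the two claimed inequivalences.

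The substance of the proof is therefore the computation itself: there is no clever shortcut, and the only a priori risk is that the outputs might coincide. The genuine subtlety is the bookkeeping above --- ensuring the compared diagrams share a biframing and that orientation reversal is accounted for --- so that a discrepancy in $Z_\mathbb{D}$ really certifies inequivalence of the underlying planar knotoids rather than a mismatch of framing, coframing, or orientation. It should also be anticipated that running the same procedure on the remaining four pairs of Table~\ref{tb:unresolved_pairs} does not separate them, so $Z_\mathbb{D}$ alone does not complete the classification of prime planar knotoids with five crossings; the theorem claims only the two pairs above.
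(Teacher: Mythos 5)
Your proposal is correct and follows essentially the same route as the paper's proof: fix biframing-matched oriented diagrams for each pair, account for orientation reversal via the antipode (the paper checks $Z_\mathbb{D}(-D_1)\neq Z_\mathbb{D}(D_2)$, which by Lemma \ref{lm:reversal} is the same check as yours), and let the Mathematica implementation certify the inequality of the first-order terms in $\epsilon$. The only cosmetic difference is that the paper draws explicit diagrams already sharing a biframing rather than adjusting by curls and coframing loops, and it likewise records that the method fails on $(5_7,5_{421})$.
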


\begin{proof}
We will prove Theorem \ref{thm:resolution} by presenting the pairs $(K_1,K_2)$ by oriented biframed knotoid diagrams $D_1$ and $D_2$ with equal biframing. We will let $-D_1$ denote the diagram resulting from reversing the orientation of $D_1$.
We will then use the computer implementation of $Z_\mathbb{D}$ from Section \ref{subsec:implementation} to compute $Z_\mathbb{D}(D_1)$, $Z_\mathbb{D}(-D_1)$, and $Z_\mathbb{D}(D_2)$ up to first order in $\epsilon$. We shall see that $Z_\mathbb{D}(D_1)\neq Z_\mathbb{D}(D_2)$ and $Z_\mathbb{D}(-D_1)\neq Z_\mathbb{D}(D_2)$. Hence $D_1\not\simeq D_2$ as unoriented biframed knotoids. Since $D_1$ and $D_2$ were chosen to have equal biframing and $-D_1$ also has biframing equal to that of $D_1$, this implies $K_1\not\simeq K_2$ by Lemma \ref{lm:diagrambijection}.

We will describe the pairs $(D_1,D_2)$ and compute the associated values of $Z_\mathbb{D}$ at the end of this section and in Appendix \ref{app:code}.
\end{proof}

Since both pairs of biframed planar knotoids in Theorem \ref{thm:resolution} are distinguished by $Z_\mathbb{D}$ but equivalent as spherical knotoids, we conclude the following from Theorem \ref{thm:resolution}:

\begin{corollary}
$Z_\mathbb{D}=Q^{\mathbb{D};\star}$ is not an invariant of biframed spherical knotoids, which were defined in \cite{moltmaker}.
\end{corollary}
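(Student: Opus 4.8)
The plan is to read the corollary off directly from Theorem \ref{thm:resolution}. Recall that by Lemma \ref{lm:universal_invariance} the invariant $Z_\mathbb{D}=Q^{\mathbb{D};\star}$ is well-defined on biframed \emph{planar} knotoids, and that there is a natural map from biframed planar knotoids to biframed spherical knotoids, induced by the one-point compactification $\mathbb{R}^2\hookrightarrow S^2$. Asserting that $Z_\mathbb{D}$ ``is an invariant of biframed spherical knotoids'' is precisely asserting that $Z_\mathbb{D}$, viewed as a function of biframed planar knotoid diagrams, factors through this map. So to prove the corollary it suffices to produce two biframed planar knotoid diagrams with the same image under this map on which $Z_\mathbb{D}$ takes distinct values.

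First I would take the diagrams $D_1,D_2$ representing the pair $(5_9,5_{561})$ from the proof of Theorem \ref{thm:resolution}. By construction these have equal biframing, and by the discussion preceding Theorem \ref{thm:resolution} they become equivalent \emph{as spherical knotoids}, since they admit diagrams with identical oriented Gauss codes (Table \ref{tb:unresolved_pairs}); as their framings (writhes) and coframings already agree, they represent one and the same biframed spherical knotoid. Next I would invoke Theorem \ref{thm:resolution}, which gives $Z_\mathbb{D}(D_1)\neq Z_\mathbb{D}(D_2)$, and in fact $Z_\mathbb{D}(-D_1)\neq Z_\mathbb{D}(D_2)$ as well, so that no orientation reversal can reconcile the two values. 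The existence of such diagrams contradicts $Z_\mathbb{D}$ being a biframed spherical knotoid invariant, which is exactly the corollary. The pair $(5_{12},5_{593})$ supplies a second, independent witness, if desired.

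The only point I would be careful to spell out is that ``equal biframing as planar diagrams, plus equivalence as spherical knotoids'' genuinely implies ``equal as biframed spherical knotoids'' in the sense of \cite{moltmaker}, i.e.\ that the framing and coframing data are compatible with the planar-to-spherical map. The framing side is immediate: the framing is the writhe, a diagrammatic quantity unaffected by adjoining the point at infinity. For the coframing one appeals to the description of biframed (spherical) knotoids in \cite{moltmaker} and uses that the coframings of $D_1$ and $D_2$ were arranged to coincide in the proof of Theorem \ref{thm:resolution}; equivalently, any discrepancy can be absorbed into a choice of coframing representative, leaving intact the conclusion $Z_\mathbb{D}(D_1)\neq Z_\mathbb{D}(D_2)$ for diagrams of a single biframed spherical knotoid. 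This is the only place where one must be careful, but it presents no real obstacle: the substance of the corollary is carried entirely by Theorem \ref{thm:resolution}.
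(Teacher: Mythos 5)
Your proposal is correct and follows essentially the same route as the paper: the paper deduces the corollary in one line from Theorem \ref{thm:resolution}, noting that the distinguished pairs are equivalent as spherical knotoids (same oriented Gauss codes) while $Z_\mathbb{D}$ separates them as biframed planar knotoids. Your extra care about the framing/coframing being compatible with the planar-to-spherical map, and the $-D_1$ check, only elaborate what the paper leaves implicit.
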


However, $Q^{A;\star}$ is an invariant of spherical knotoids under certain conditions on the ribbon Hopf algebra $A$:

\begin{lemma}
If $uv^{-1}\in A$ is self-inverse, i.e.~$(uv^{-1})^2=1\in A$, then $Q^{A;\star}$ is an invariant of spherical biframed knotoids. If $uv^{-1}=1$ then $Q^{A;\star}$ is in invariant of framed spherical knotoids, which were also defined in \cite{moltmaker}.
\end{lemma}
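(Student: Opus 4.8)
The plan is to exploit the geometric realization: biframed spherical knotoids differ from biframed planar knotoids precisely in that one additionally allows the head and leg of a diagram to be dragged "around the back" of $S^2$, i.e.\ across the point at infinity. As noted in the proof of Lemma~\ref{lm:universal_invariance}, invariance of $Q^{A;\star}$ under the orbiting moves is automatic from the ribbon axioms; what is \emph{not} automatic for a spherical knotoid is invariance under the extra move that passes an endpoint across $\infty$. Concretely, pushing (say) the head across the back of the sphere and bringing it back amounts, in a planar diagram, to carrying the head once around the entire rest of the diagram — equivalently, to sliding every other strand of the diagram over (or under) the head. On the level of $Q^{A;\star}$ this is exactly the computation in Remark~\ref{rk:codomain} read "the other way": sliding the diagram past the head produces, on that head strand, a global conjugation, and pushing the endpoint cap around picks up exactly the factor $uv^{-1}$ (or its inverse) from the full rotation $C^{\pm}$ that is created and then removed.

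So the first step is to make precise the diagrammatic move that distinguishes the spherical from the planar equivalence relation, and to reduce it (as in the proof of Lemma~\ref{lm:cf_welldef} and in Figure~\ref{fig:orbiting_reduction}) to a single elementary local move: inserting a loop carrying one factor of $uv^{-1}$ at an endpoint. The second step is to evaluate $Q^{A;\star}$ on the two sides of this move using Figure~\ref{fig:universal_invariant}; the only difference between them is the presence of an extra factor $uv^{-1}$ (respectively $(uv^{-1})^{-1}$) inserted into the product defining $Q^{A;\star}(K)$ at the endpoint in question. The third step is then immediate algebra: if $(uv^{-1})^2 = 1$ then $uv^{-1} = (uv^{-1})^{-1}$, so that the move creating a $+1$ such loop and the move creating a $-1$ such loop have the same effect, and combined with the coframing identities (Figure~\ref{fig:coframing_identities}) — under which $Q^{A;\star}$ is already invariant since it is a biframed planar invariant — one concludes that the value of $Q^{A;\star}$ is unchanged by pushing an endpoint across $\infty$. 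Hence $Q^{A;\star}$ descends to an invariant of biframed spherical knotoids. If moreover $uv^{-1} = 1$ then the inserted factor is trivial outright, so the additional identification in the definition of \emph{framed} (as opposed to biframed) spherical knotoids from \cite{moltmaker} — which quotients out the coframing entirely, i.e.\ allows unrestricted motion of endpoints — is also respected, giving the second assertion.

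I expect the main obstacle to be bookkeeping rather than anything conceptual: one must verify carefully that passing an endpoint around the back of $S^2$ really does reduce to a \emph{single} $uv^{-1}$-loop at that endpoint (with the correct sign), and in particular that the global conjugation by the product of the elements on all the other strands cancels — this is the content of Remark~\ref{rk:codomain}, and it works here precisely because a knotoid has a canonical starting point (the leg), so that after the slide one is multiplying the same elements of $A$ in the same cyclic order, now conjugated, and the conjugation is undone when the strand is returned to its original position. Once that reduction is in hand, the ribbon-algebra input needed is only that $v^2 = S(u)u$ with $v$ central, from which $uv^{-1}$ is grouplike-like enough that $(uv^{-1})^{\pm 1}$ behave as claimed; everything else is the already-established fact that $Q^{A;\star}$ is a biframed planar knotoid invariant satisfying the coframing identities.
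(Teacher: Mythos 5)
Your overall strategy---isolate the extra move separating spherical from planar biframed equivalence, evaluate $Q^{A;\star}$ on it, and feed in the hypothesis on $uv^{-1}$---is the same as the paper's, but your reduction has a genuine gap and the algebra as stated does not close. The paper (citing \cite{moltmaker}) takes the extra generator to be the replacement of a counterclockwise full rotation $C$ by a clockwise one $\overline{C}$ (or vice versa) \emph{anywhere} in a rotational tangle decomposition, not only at an endpoint; since $Q^{A;\star}$ assigns $uv^{-1}$ to one and $(uv^{-1})^{-1}=vu^{-1}$ to the other, the hypothesis $(uv^{-1})^{2}=1$ is precisely the statement that the two sides of this move receive the same value, and invariance is immediate. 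You instead claim the spherical/planar difference is generated by dragging endpoints across $\infty$ and that this reduces to ``inserting a loop carrying one factor of $uv^{-1}$ at an endpoint.'' First, the endpoint-only generating claim is unjustified (the sphere move applies to any outer arc, and the clean generating statement available is the $C\leftrightarrow\overline{C}$ one). Second, and more seriously, the bookkeeping is off by the crucial factor: if the move genuinely contributed one net factor of $uv^{-1}$ to the product, then invariance would require $uv^{-1}=1$, not merely $(uv^{-1})^{2}=1$. What actually happens when a strand is pushed across the back of $S^{2}$ is that its rotation number changes by $2$, i.e.\ a $C$ is traded for a $\overline{C}$, a net factor $(uv^{-1})^{\pm 2}$---this is exactly where self-inverseness enters. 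Your patch, that the move creating a $+1$ loop and the move creating a $-1$ loop ``have the same effect,'' compares two different moves with each other rather than either one with the identity, so it cannot yield invariance; and the coframing identities of Figure \ref{fig:coframing_identities} are already part of planar biframed equivalence (under which $Q^{A;\star}$ is invariant by Lemma \ref{lm:universal_invariance}), so they cannot absorb a leftover factor $uv^{-1}\neq 1$.

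Your handling of the second statement is essentially the paper's and is fine: adjusting the coframing amounts to inserting components $C^{\pm}$ at an endpoint, and when $uv^{-1}=1$ these contribute trivially, so $Q^{A;\star}$ is independent of the coframing and descends to framed spherical knotoids (the spherical move above also being harmless since $1$ is self-inverse). So the second claim stands once the first is repaired by replacing your ``single $uv^{-1}$-loop insertion'' with the correct $C\leftrightarrow\overline{C}$ move.
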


\begin{proof}
The first statement is immediate, since two equivalent biframed spherical knotoids can be related by a sequence of moves of biframed planar knotoids and moves replacing a rotational tangle component $C$ by $\overline{C}$ or vice versa. See \cite{moltmaker} for details. The second statement follows from noting that $1$ is obviously self-inverse, and that we can adjust the coframing of a biframed knotoid diagram $K$ by adding a number of rotational tangle components $C$ or $\overline{C}$ at either endpoint. Since $Q^{A;\star}$ associates $1\in A$ to these components, $Q^{A;\star}(K)$ is clearly independent of the chosen coframing of $K$.
\end{proof}

Note that an example where $uv^{-1}=1$ was already examined in \cite{moltmaker}.\\


With these corollaries out of the way, we go on to finish the proof of Theorem \ref{thm:resolution} by giving the appropriate diagrams for $5_9$, $5_{12}$, $5_{561}$ and $5_{593}$, and computing parts of $Z_\mathbb{D}$ for these diagrams. 

\begin{proof}
(\textit{Rest of the proof of Theorem \ref{thm:resolution}})
Oriented diagrams for $5_9$ and $5_{561}$ are given in Figure \ref{fig:9and561}. These diagrams are drawn to have equal biframing, as required by the proof of Theorem \ref{thm:resolution}. Note that these diagrams for $5_9$ and $5_{561}$ can be related to those given in \cite{goundaroulis2019systematic} by planar isotopy, as can be checked using the extended Gauss codes of the diagrams, for example.

\begin{figure}[h]
    \centering
    \includegraphics[width=.7\linewidth]{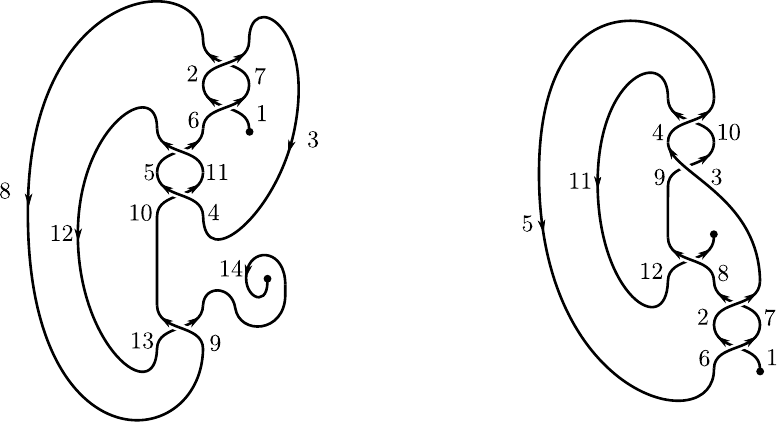}
    \caption{Biframed planar knotoid diagrams for $5_9$ (left) and $5_{561}$ (right).}
    \label{fig:9and561}
\end{figure}

To see that the diagrams in Figure \ref{fig:9and561} indeed have equal biframings, one can compute the writhe of both diagrams to conclude they both have framing $-1$, and use Remark \ref{rk:coframing_comp} to find both have coframing $-1$. Diagrams for $5_{12}$ and $5_{593}$ can be obtained by replacing the positive crossings in Figure \ref{fig:9and561} by negative ones; see Figure \ref{fig:12and593}. These diagrams both have biframing $(-5,-1)$ as can be checked by the methods mentioned above.

\begin{figure}[h]
    \centering
    \includegraphics[width=.7\linewidth]{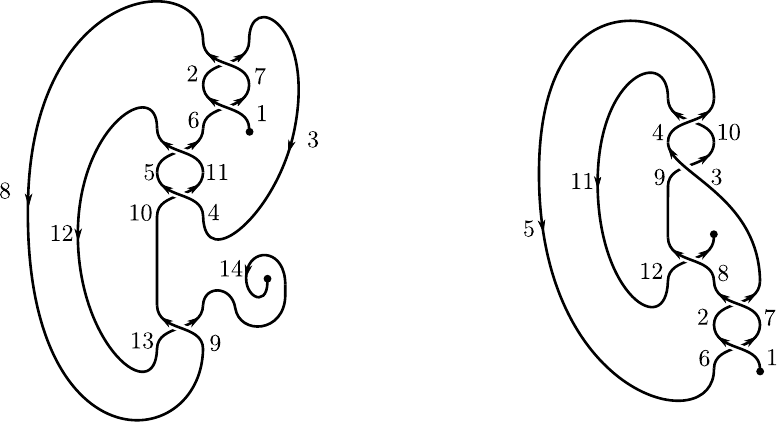}
    \caption{Biframed planar knotoid diagrams for $5_{12}$ (left) and $5_{593}$.}
    \label{fig:12and593}
\end{figure}

Now that we have diagrams for all the knotoids under consideration we would like to compute $Z_\mathbb{D}$ for these diagrams. To do so we have purposefully drawn them so that their rotational tangle decompositions can be read of immediately, as indicated by the labels in Figures \ref{fig:9and561} and \ref{fig:12and593}. We therefore find the following decompositions for these diagrams:
\begin{align*}
    & 5_{9} =  R_{6,1}\, R_{2,7}\, \overline{C}_{3}\, \overline{R}_{11,5}\, \overline{R}_{4,10}\, C_{8}\, \overline{R}_{9,13}\, C_{12}\, C_{14}, \\
    & 5_{12} = \overline{R}_{6,1}\, \overline{R}_{2,7}\, \overline{C}_{3}\, \overline{R}_{11,5}\, \overline{R}_{4,10}\, C_{8}\, \overline{R}_{9,13}\, C_{12}\, C_{14}, \\
    & 5_{561} = R_{6,1}\, R_{2,7}\, \overline{R}_{3,9}\, \overline{R}_{10,4}\, C_5\, \overline{R}_{8,12}\, C_{11}, \\
    & 5_{593} = \overline{R}_{6,1}\, \overline{R}_{2,7}\, \overline{R}_{3,9}\, \overline{R}_{10,4}\, C_5\, \overline{R}_{8,12}\, C_{11}.
\end{align*}
By way of a slight abuse of notation, in these decompositions we have used $5_{9}$ to denote the diagram for $5_{9}$ in Figure \ref{fig:9and561}, and similarly for the other diagrams. 

Armed with the decompositions above, it is a simple matter of rerunning the steps of Example \ref{ex:mathematica} to perform the computations of $Z_\mathbb{D}$ necessary to finish the proof of Theorem \ref{thm:resolution}. These computations are given in Appendix \ref{app:code}, where the first orders in $\epsilon$ are given for $Z_\mathbb{D}(D_1)$, $Z_\mathbb{D}(-D_1)$, and $Z_\mathbb{D}(D_2)$. Here $(D_1,D_2)$ are either $(5_9,5_{561})$ or $(5_{12},5_{593})$, as described in the proof of Theorem \ref{thm:resolution}. The results in Appendix \ref{app:code} show that in both cases, $Z_\mathbb{D}(D_1)\neq Z_\mathbb{D}(D_2)$ and $Z_\mathbb{D}(-D_1)\neq Z_\mathbb{D}(D_2)$ up to first order in $\epsilon$, meaning that $Z_\mathbb{D}(D_1)\neq Z_\mathbb{D}(D_2)$ and $Z_\mathbb{D}(-D_1)\neq Z_\mathbb{D}(D_2)$ as required. This finishes the proof of Theorem \ref{thm:resolution}.
\end{proof}

\textbf{Acknowledgements:} The first author would like to thank Jo Ellis-Monaghan for her guidance, and Kenzo Yasaka and Daniel Boutros for several helpful conversations.



\appendix

\section{Mathematica Computations}\label{app:code}

Below are the computations needed for the proof of Theorem \ref{thm:resolution}, implemented in Mathematica using the code provided in \cite[App.~B]{bar2021}. The implementation can also be found on the second author's website \href{http://rolandvdv.nl/PG/}{\texttt{http://rolandvdv.nl/PG/}}. In the results of these computations we use the notation $T=e^{-\hbar(\mathbf{b}-\epsilon \mathbf{a})}$. For completeness, our calculations on $(5_7,5_{421})$ are also included. Note that our calculations show $Z_{\mathbb{D}}(5_7)=Z_{\mathbb{D}}(5_{421})$ up to first order in $\epsilon$, meaning we \textbf{cannot} distinguish this pair using the methods described in Section \ref{sec:examples}.\\

\noindent\textbf{$5_7$ and $5_{421}$:}

\begin{mmaCell}[morefunctionlocal={j}]{Input}
Z57 = \mmaSub{\mmaOver{tR}{_}}{11,1} \mmaSub{tR}{12,9} \
\mmaSub{\mmaOver{tR}{_}}{8,2} \mmaSub{\mmaOver{tR}{_}}{3,6} \
\mmaSub{tR}{5,13} \mmaSub{\mmaOver{tC}{_}}{10} \
\mmaSub{\mmaOver{tC}{_}}{7} \
\mmaSub{tC}{4};
Do[Z57 = Z57 // \mmaSub{tm}{1,j\(\pmb{\to}\)1},\{j,2,13\}];
Coefficient[PowerExpand[Z57[[3]] // Simplify], \(\epsilon\), 1]
\end{mmaCell}

\begin{mmaCell}{Output}
\mmaFrac{-4\mmaSubSup{a}{1}{2}\mmaSubSup{T}{1}{2}+\mmaSub{x}{1}\mmaSub{y}{1}(4+8\mmaSub{T}{1}+4\mmaSubSup{T}{1}{2}-3\mmaSub{x}{1}\mmaSub{y}{1})-4\mmaSub{a}{1}\mmaSub{T}{1}(-1+\mmaSub{T}{1}+\mmaSubSup{T}{1}{2}+2\mmaSub{x}{1}\mmaSub{y}{1})}{4\mmaSubSup{T}{1}{3/2}}
\end{mmaCell}

\begin{mmaCell}{Input}
Do[Z57op = Z57 // \mmaSub{tS}{1}, 1];
Coefficient[PowerExpand[Z57op[[3]] // Simplify], \(\epsilon\), 1]
\end{mmaCell}

\begin{mmaCell}{Output}
-\mmaFrac{4(2+\mmaSub{a}{1})\mmaSubSup{T}{1}{3}+4\mmaSubSup{T}{1}{2}(\mmaSub{a}{1}+\mmaSubSup{a}{1}{2}-\mmaSub{x}{1}\mmaSub{y}{1})+\mmaSub{x}{1}\mmaSub{y}{1}(-4+3\mmaSub{x}{1}\mmaSub{y}{1})+\mmaSub{T}{1}(-8(1+\mmaSub{x}{1}\mmaSub{y}{1})+\mmaSub{a}{1}(-4+8\mmaSub{x}{1}\mmaSub{y}{1}))}{4\mmaSubSup{T}{1}{3/2}}
\end{mmaCell}

\begin{mmaCell}[morefunctionlocal={j}]{Input}
Z5421 = \mmaSub{tR}{9,1} \mmaSub{tR}{2,5} \mmaSub{\mmaOver{tR}{_}}{3,13} \
\mmaSub{\mmaOver{tR}{_}}{6,12} \mmaSub{\mmaOver{tR}{_}}{10,7} \
\mmaSub{tC}{11} \mmaSub{tC}{8} \mmaSub{\mmaOver{tC}{_}}{4};
Do[Z5421 = Z5421 // \mmaSub{tm}{1,j\(\pmb{\to}\)1},\{j,2,13\}];
Coefficient[PowerExpand[Z5421[[3]] // Simplify], \(\epsilon\), 1]
\end{mmaCell}

\begin{mmaCell}{Output}
\mmaFrac{-4\mmaSubSup{a}{1}{2}\mmaSubSup{T}{1}{2}+\mmaSub{x}{1}\mmaSub{y}{1}(4+8\mmaSub{T}{1}+4\mmaSubSup{T}{1}{2}-3\mmaSub{x}{1}\mmaSub{y}{1})-4\mmaSub{a}{1}\mmaSub{T}{1}(-1+\mmaSub{T}{1}+\mmaSubSup{T}{1}{2}+2\mmaSub{x}{1}\mmaSub{y}{1})}{4\mmaSubSup{T}{1}{3/2}}
\end{mmaCell}

\noindent\textbf{$5_9$ and $5_{561}$:}

\begin{mmaCell}[morefunctionlocal={j}]{Input}
Z59 = \mmaSub{tR}{6,1} \mmaSub{tR}{2,7} \mmaSub{\mmaOver{tR}{_}}{11,5} \
\mmaSub{\mmaOver{tR}{_}}{4,10} \mmaSub{\mmaOver{tR}{_}}{9,13} \
\mmaSub{\mmaOver{tC}{_}}{3} \mmaSub{tC}{12} \mmaSub{tC}{8} \
\mmaSub{tC}{14};
Do[Z59 = Z59 // \mmaSub{tm}{1,j\(\pmb{\to}\)1},\{j,2,?\}];
Coefficient[PowerExpand[Z59[[3]] // Simplify], \(\epsilon\), 1]
\end{mmaCell}

\begin{mmaCell}{Output}
4+2\mmaSub{a}{1}+\mmaSub{x}{1}\mmaSub{y}{1}-\mmaSubSup{x}{1}{2}\mmaSubSup{y}{1}{2}-\mmaFrac{\mmaSub{x}{1}\mmaSub{y}{1}(1+\mmaSub{x}{1}\mmaSub{y}{1})}{\mmaSubSup{T}{1}{2}}+\mmaSubSup{T}{1}{2}(2+\mmaSub{x}{1}\mmaSub{y}{1}+\mmaSub{a}{1}(1+\mmaSub{x}{1}\mmaSub{y}{1}))
  -\mmaFrac{8+4\mmaSub{x}{1}\mmaSub{y}{1}-\mmaSubSup{x}{1}{2}\mmaSubSup{y}{1}{2}+4\mmaSub{a}{1}(1+\mmaSub{x}{1}\mmaSub{y}{1})}{4\mmaSub{T}{1}}-\mmaSub{T}{1}(4+\mmaSubSup{a}{1}{2}+3\mmaSub{x}{1}\mmaSub{y}{1}+2\mmaSub{a}{1}(2+\mmaSub{x}{1}\mmaSub{y}{1}))
\end{mmaCell}

\begin{mmaCell}{Input}
Do[Z59op = Z59 // \mmaSub{tS}{1}, 1];
Coefficient[PowerExpand[Z59op[[3]] // Simplify], \(\epsilon\), 1]
\end{mmaCell}

\begin{mmaCell}{Output}
-\mmaSubSup{a}{1}{2}+\mmaFrac{\mmaSub{a}{1}(-1+2\mmaSub{T}{1}-\mmaSub{x}{1}\mmaSub{y}{1}-2\mmaSubSup{T}{1}{2}(1+\mmaSub{x}{1}\mmaSub{y}{1})+\mmaSubSup{T}{1}{3}(1+\mmaSub{x}{1}\mmaSub{y}{1}))}{\mmaSubSup{T}{1}{2}}
  -\mmaFrac{\mmaSub{x}{1}\mmaSub{y}{1}(4-4\mmaSubSup{T}{1}{3}+4\mmaSubSup{T}{1}{4}+4\mmaSub{x}{1}\mmaSub{y}{1}+4\mmaSubSup{T}{1}{2}(1+\mmaSub{x}{1}\mmaSub{y}{1})-\mmaSub{T}{1}(4+\mmaSub{x}{1}\mmaSub{y}{1}))}{4\mmaSubSup{T}{1}{3}}
\end{mmaCell}

\begin{mmaCell}[morefunctionlocal={j}]{Input}
Z5561 = \mmaSub{tR}{6,1} \mmaSub{tR}{2,7} \mmaSub{\mmaOver{tR}{_}}{8,12} \
\mmaSub{\mmaOver{tR}{_}}{3,9} \mmaSub{\mmaOver{tR}{_}}{10,4} \
\mmaSub{tC}{11} \mmaSub{tC}{5};
Do[Z5561 = Z5561 // \mmaSub{tm}{1,j\(\pmb{\to}\)1},\{j,2,12\}];
Coefficient[PowerExpand[Z5561[[3]] // Simplify], \(\epsilon\), 1]
\end{mmaCell}

\begin{mmaCell}{Output}
-\mmaSubSup{a}{1}{2}\mmaSub{T}{1}+\mmaFrac{\mmaSub{a}{1}(-1+2\mmaSub{T}{1}-\mmaSub{x}{1}\mmaSub{y}{1}+\mmaSubSup{T}{1}{3}(1+\mmaSub{x}{1}\mmaSub{y}{1})-2\mmaSubSup{T}{1}{2}(2+\mmaSub{x}{1}\mmaSub{y}{1}))}{\mmaSub{T}{1}}
  -\mmaFrac{\mmaSub{x}{1}\mmaSub{y}{1}(4-4\mmaSubSup{T}{1}{3}+4\mmaSubSup{T}{1}{4}+4\mmaSub{x}{1}\mmaSub{y}{1}+4\mmaSubSup{T}{1}{2}(3+\mmaSub{x}{1}\mmaSub{y}{1})-\mmaSub{T}{1}(4+\mmaSub{x}{1}\mmaSub{y}{1}))}{4\mmaSubSup{T}{1}{2}}
\end{mmaCell}

\noindent\textbf{$5_{12}$ and $5_{593}$:}

\begin{mmaCell}[morefunctionlocal={j}]{Input}
Z512 = \mmaSub{\mmaOver{tR}{_}}{1,6} \mmaSub{\mmaOver{tR}{_}}{7,2} \
\mmaSub{\mmaOver{tR}{_}}{11,5} \mmaSub{\mmaOver{tR}{_}}{4,10} \
\mmaSub{\mmaOver{tR}{_}}{9,13} \mmaSub{\mmaOver{tC}{_}}{3} \
\mmaSub{tC}{12} \mmaSub{tC}{8} \mmaSub{tC}{14};
Do[Z512 = Z512 // \mmaSub{tm}{1,j\(\pmb{\to}\)1},\{j,2,14\}];
Coefficient[PowerExpand[Z512[[3]] // Simplify], \(\epsilon\), 1]
\end{mmaCell}

\begin{mmaCell}{Output}
\mmaFrac{1}{4\mmaSubSup{T}{1}{5}\mmaSup{(1-\mmaSub{T}{1}+\mmaSubSup{T}{1}{2}-\mmaSubSup{T}{1}{3}+\mmaSubSup{T}{1}{4})}{3}}(-4\mmaSub{a}{1}(2+5\mmaSub{a}{1})\mmaSubSup{T}{1}{18}-4(3+20\mmaSub{a}{1})\mmaSubSup{T}{1}{7}\mmaSub{x}{1}\mmaSub{y}{1}-120\mmaSub{a}{1}\mmaSubSup{T}{1}{9}\mmaSub{x}{1}\mmaSub{y}{1}
  -19\mmaSubSup{x}{1}{2}\mmaSubSup{y}{1}{2}+6\mmaSub{T}{1}\mmaSubSup{x}{1}{2}\mmaSubSup{y}{1}{2}-32\mmaSubSup{T}{1}{2}\mmaSubSup{x}{1}{2}\mmaSubSup{y}{1}{2}+18\mmaSubSup{T}{1}{3}\mmaSubSup{x}{1}{2}\mmaSubSup{y}{1}{2}-39\mmaSubSup{T}{1}{4}\mmaSubSup{x}{1}{2}\mmaSubSup{y}{1}{2}+\mmaSubSup{T}{1}{8}\mmaSub{x}{1}\mmaSub{y}{1}(-4+96\mmaSub{a}{1}-35\mmaSub{x}{1}\mmaSub{y}{1})
  +4\mmaSubSup{T}{1}{6}\mmaSub{x}{1}\mmaSub{y}{1}(1+12\mmaSub{a}{1}-11\mmaSub{x}{1}\mmaSub{y}{1})+2\mmaSubSup{T}{1}{5}\mmaSub{x}{1}\mmaSub{y}{1}(-8-20\mmaSub{a}{1}+\mmaSub{x}{1}\mmaSub{y}{1})+\mmaSubSup{T}{1}{11}(4+40\mmaSubSup{a}{1}{2}-20\mmaSub{x}{1}\mmaSub{y}{1}
  -6\mmaSubSup{x}{1}{2}\mmaSubSup{y}{1}{2}+\mmaSub{a}{1}(60-84\mmaSub{x}{1}\mmaSub{y}{1}))+2\mmaSubSup{T}{1}{13}(-8+40\mmaSubSup{a}{1}{2}-24\mmaSub{x}{1}\mmaSub{y}{1}-3\mmaSubSup{x}{1}{2}\mmaSubSup{y}{1}{2}+\mmaSub{a}{1}(44-24\mmaSub{x}{1}\mmaSub{y}{1}))
  -4\mmaSubSup{T}{1}{10}(2+5\mmaSubSup{a}{1}{2}+2\mmaSub{x}{1}\mmaSub{y}{1}+7\mmaSubSup{x}{1}{2}\mmaSubSup{y}{1}{2}+\mmaSub{a}{1}(9-17\mmaSub{x}{1}\mmaSub{y}{1}))+4\mmaSubSup{T}{1}{17}(-1+10\mmaSubSup{a}{1}{2}-2\mmaSub{x}{1}\mmaSub{y}{1}
  +\mmaSub{a}{1}(5-3\mmaSub{x}{1}\mmaSub{y}{1}))-4\mmaSubSup{T}{1}{14}(-6+25\mmaSubSup{a}{1}{2}-11\mmaSub{x}{1}\mmaSub{y}{1}+2\mmaSubSup{x}{1}{2}\mmaSubSup{y}{1}{2}-6\mmaSub{a}{1}(-4+\mmaSub{x}{1}\mmaSub{y}{1}))
  +\mmaSubSup{T}{1}{16}(16-60\mmaSubSup{a}{1}{2}+20\mmaSub{x}{1}\mmaSub{y}{1}-3\mmaSubSup{x}{1}{2}\mmaSubSup{y}{1}{2}+12\mmaSub{a}{1}(-3+\mmaSub{x}{1}\mmaSub{y}{1}))+\mmaSubSup{T}{1}{12}(8-60\mmaSubSup{a}{1}{2}+24\mmaSub{x}{1}\mmaSub{y}{1}-15\mmaSubSup{x}{1}{2}\mmaSubSup{y}{1}{2}
  +\mmaSub{a}{1}(-76+52\mmaSub{x}{1}\mmaSub{y}{1}))+\mmaSubSup{T}{1}{15}(80\mmaSubSup{a}{1}{2}+\mmaSub{a}{1}(60-36\mmaSub{x}{1}\mmaSub{y}{1})-2(12+18\mmaSub{x}{1}\mmaSub{y}{1}+\mmaSubSup{x}{1}{2}\mmaSubSup{y}{1}{2})))
\end{mmaCell}

\begin{mmaCell}{Input}
Do[Z512op = Z512 // \mmaSub{tS}{1}, 1];
Coefficient[PowerExpand[Z512op[[3]] // Simplify], \(\epsilon\), 1]
\end{mmaCell}

\begin{mmaCell}{Output}
\mmaFrac{1}{4\mmaSubSup{T}{1}{6}\mmaSup{(1-\mmaSub{T}{1}+\mmaSubSup{T}{1}{2}-\mmaSubSup{T}{1}{3}+\mmaSubSup{T}{1}{4})}{3}}(-20\mmaSubSup{a}{1}{2}\mmaSubSup{T}{1}{18}+4(1-20\mmaSub{a}{1})\mmaSubSup{T}{1}{7}\mmaSub{x}{1}\mmaSub{y}{1}-24(-1+5\mmaSub{a}{1})\mmaSubSup{T}{1}{9}\mmaSub{x}{1}\mmaSub{y}{1}
  -19\mmaSubSup{x}{1}{2}\mmaSubSup{y}{1}{2}+6\mmaSub{T}{1}\mmaSubSup{x}{1}{2}\mmaSubSup{y}{1}{2}-32\mmaSubSup{T}{1}{2}\mmaSubSup{x}{1}{2}\mmaSubSup{y}{1}{2}+18\mmaSubSup{T}{1}{3}\mmaSubSup{x}{1}{2}\mmaSubSup{y}{1}{2}-39\mmaSubSup{T}{1}{4}\mmaSubSup{x}{1}{2}\mmaSubSup{y}{1}{2}+2\mmaSubSup{T}{1}{5}\mmaSub{x}{1}\mmaSub{y}{1}(-4-20\mmaSub{a}{1}+\mmaSub{x}{1}\mmaSub{y}{1})
  -4\mmaSubSup{T}{1}{6}\mmaSub{x}{1}\mmaSub{y}{1}(1-12\mmaSub{a}{1}+11\mmaSub{x}{1}\mmaSub{y}{1})+\mmaSubSup{T}{1}{11}(28+40\mmaSubSup{a}{1}{2}+28\mmaSub{x}{1}\mmaSub{y}{1}-6\mmaSubSup{x}{1}{2}\mmaSubSup{y}{1}{2}+\mmaSub{a}{1}(44-84\mmaSub{x}{1}\mmaSub{y}{1}))
  -4\mmaSubSup{T}{1}{10}(4+5\mmaSubSup{a}{1}{2}+8\mmaSub{x}{1}\mmaSub{y}{1}+7\mmaSubSup{x}{1}{2}\mmaSubSup{y}{1}{2}+\mmaSub{a}{1}(7-17\mmaSub{x}{1}\mmaSub{y}{1}))-4\mmaSubSup{T}{1}{14}(6+25\mmaSubSup{a}{1}{2}+5\mmaSub{x}{1}\mmaSub{y}{1}+2\mmaSubSup{x}{1}{2}\mmaSubSup{y}{1}{2}
  +\mmaSub{a}{1}(14-6\mmaSub{x}{1}\mmaSub{y}{1}))+4\mmaSubSup{T}{1}{17}(1+10\mmaSubSup{a}{1}{2}+2\mmaSub{x}{1}\mmaSub{y}{1}+\mmaSub{a}{1}(1-3\mmaSub{x}{1}\mmaSub{y}{1}))-\mmaSubSup{T}{1}{12}(32+60\mmaSubSup{a}{1}{2}+32\mmaSub{x}{1}\mmaSub{y}{1}
  +15\mmaSubSup{x}{1}{2}\mmaSubSup{y}{1}{2}-52\mmaSub{a}{1}(-1+\mmaSub{x}{1}\mmaSub{y}{1}))-\mmaSubSup{T}{1}{16}(8+60\mmaSubSup{a}{1}{2}+12\mmaSub{x}{1}\mmaSub{y}{1}+3\mmaSubSup{x}{1}{2}\mmaSubSup{y}{1}{2}-12\mmaSub{a}{1}(-1+\mmaSub{x}{1}\mmaSub{y}{1}))
  +\mmaSubSup{T}{1}{13}(32+80\mmaSubSup{a}{1}{2}+24\mmaSub{x}{1}\mmaSub{y}{1}-6\mmaSubSup{x}{1}{2}\mmaSubSup{y}{1}{2}-8\mmaSub{a}{1}(-7+6\mmaSub{x}{1}\mmaSub{y}{1}))+\mmaSubSup{T}{1}{8}\mmaSub{x}{1}\mmaSub{y}{1}(96\mmaSub{a}{1}-5(4+7\mmaSub{x}{1}\mmaSub{y}{1}))
  +2\mmaSubSup{T}{1}{15}(8+40\mmaSubSup{a}{1}{2}+10\mmaSub{x}{1}\mmaSub{y}{1}-\mmaSubSup{x}{1}{2}\mmaSubSup{y}{1}{2}-2\mmaSub{a}{1}(-7+9\mmaSub{x}{1}\mmaSub{y}{1})))
\end{mmaCell}

\begin{mmaCell}[morefunctionlocal={j}]{Input}
Z5593 = \mmaSub{\mmaOver{tR}{_}}{1,6} \mmaSub{\mmaOver{tR}{_}}{7,2} \
\mmaSub{\mmaOver{tR}{_}}{8,12} \mmaSub{\mmaOver{tR}{_}}{3,9} \mmaSub{\
\mmaOver{tR}{_}}{10,4} \mmaSub{tC}{11} \mmaSub{tC}{5};
Do[Z5593 = Z5593 // \mmaSub{tm}{1,j\(\pmb{\to}\)1},\{j,2,12\}];
Coefficient[PowerExpand[Z5593[[3]] // Simplify], \(\epsilon\), 1]
\end{mmaCell}

\begin{mmaCell}{Output}
\mmaFrac{1}{4\mmaSubSup{T}{1}{5}\mmaSup{(1-\mmaSub{T}{1}+\mmaSubSup{T}{1}{2}-\mmaSubSup{T}{1}{3}+\mmaSubSup{T}{1}{4})}{3}}(-4\mmaSub{a}{1}(2+5\mmaSub{a}{1})\mmaSubSup{T}{1}{18}-4(3+20\mmaSub{a}{1})\mmaSubSup{T}{1}{7}\mmaSub{x}{1}\mmaSub{y}{1}-120\mmaSub{a}{1}\mmaSubSup{T}{1}{9}\mmaSub{x}{1}\mmaSub{y}{1}
  -19\mmaSubSup{x}{1}{2}\mmaSubSup{y}{1}{2}+6\mmaSub{T}{1}\mmaSubSup{x}{1}{2}\mmaSubSup{y}{1}{2}-32\mmaSubSup{T}{1}{2}\mmaSubSup{x}{1}{2}\mmaSubSup{y}{1}{2}+18\mmaSubSup{T}{1}{3}\mmaSubSup{x}{1}{2}\mmaSubSup{y}{1}{2}-39\mmaSubSup{T}{1}{4}\mmaSubSup{x}{1}{2}\mmaSubSup{y}{1}{2}+\mmaSubSup{T}{1}{8}\mmaSub{x}{1}\mmaSub{y}{1}(-4+96\mmaSub{a}{1}-35\mmaSub{x}{1}\mmaSub{y}{1})
  +4\mmaSubSup{T}{1}{6}\mmaSub{x}{1}\mmaSub{y}{1}(1+12\mmaSub{a}{1}-11\mmaSub{x}{1}\mmaSub{y}{1})+2\mmaSubSup{T}{1}{5}\mmaSub{x}{1}\mmaSub{y}{1}(-8-20\mmaSub{a}{1}+\mmaSub{x}{1}\mmaSub{y}{1})+\mmaSubSup{T}{1}{11}(28+40\mmaSubSup{a}{1}{2}+4\mmaSub{x}{1}\mmaSub{y}{1}
  -6\mmaSubSup{x}{1}{2}\mmaSubSup{y}{1}{2}+\mmaSub{a}{1}(60-84\mmaSub{x}{1}\mmaSub{y}{1}))-\mmaSubSup{T}{1}{12}(32+60\mmaSubSup{a}{1}{2}+16\mmaSub{x}{1}\mmaSub{y}{1}+15\mmaSubSup{x}{1}{2}\mmaSubSup{y}{1}{2}+\mmaSub{a}{1}(76-52\mmaSub{x}{1}\mmaSub{y}{1}))
  +\mmaSubSup{T}{1}{13}(32+80\mmaSubSup{a}{1}{2}-6\mmaSubSup{x}{1}{2}\mmaSubSup{y}{1}{2}+\mmaSub{a}{1}(88-48\mmaSub{x}{1}\mmaSub{y}{1}))-4\mmaSubSup{T}{1}{10}(4+5\mmaSubSup{a}{1}{2}+4\mmaSub{x}{1}\mmaSub{y}{1}+7\mmaSubSup{x}{1}{2}\mmaSubSup{y}{1}{2}
  +\mmaSub{a}{1}(9-17\mmaSub{x}{1}\mmaSub{y}{1}))+4\mmaSubSup{T}{1}{17}(1+10\mmaSubSup{a}{1}{2}+\mmaSub{a}{1}(5-3\mmaSub{x}{1}\mmaSub{y}{1}))-4\mmaSubSup{T}{1}{14}(6+25\mmaSubSup{a}{1}{2}+\mmaSub{x}{1}\mmaSub{y}{1}+2\mmaSubSup{x}{1}{2}\mmaSubSup{y}{1}{2}
  -6\mmaSub{a}{1}(-4+\mmaSub{x}{1}\mmaSub{y}{1}))-\mmaSubSup{T}{1}{16}(8+60\mmaSubSup{a}{1}{2}+4\mmaSub{x}{1}\mmaSub{y}{1}+3\mmaSubSup{x}{1}{2}\mmaSubSup{y}{1}{2}-12\mmaSub{a}{1}(-3+\mmaSub{x}{1}\mmaSub{y}{1}))
  +2\mmaSubSup{T}{1}{15}(8+40\mmaSubSup{a}{1}{2}+2\mmaSub{x}{1}\mmaSub{y}{1}-\mmaSubSup{x}{1}{2}\mmaSubSup{y}{1}{2}-6\mmaSub{a}{1}(-5+3\mmaSub{x}{1}\mmaSub{y}{1})))
\end{mmaCell}

It may be non-obvious from this output that $Z_\mathbb{D}(5_{12})\neq Z_\mathbb{D}(5_{593})$, so we let Mathematica verify this below:

\begin{mmaCell}[moredefined={Z5593, Z512}]{Input}
PowerExpand[Simplify[Z5593[[3]]-Z512[[3]]]]
\end{mmaCell}

\begin{mmaCell}{Output}
\mmaFrac{2\mmaSubSup{T}{1}{5}(-1+2\mmaSub{T}{1}-2\mmaSubSup{T}{1}{2}+\mmaSubSup{T}{1}{3})(1+\mmaSub{x}{1}\mmaSub{y}{1})\mmaUnd{\(\pmb{\epsilon}\)}}{\mmaSup{(1-\mmaSub{T}{1}+\mmaSubSup{T}{1}{2}-\mmaSubSup{T}{1}{3}+\mmaSubSup{T}{1}{4})}{2}}+\mmaSup{O[\(\pmb{\epsilon}\)]}{2}
\end{mmaCell}

\end{document}